\documentclass[a4paper,12pt]{amsart}
\usepackage{graphicx}
\usepackage{amssymb, enumitem, mathtools}
\usepackage{nicematrix}
\usepackage{verbatim} 

\usepackage[alphabetic,initials]{amsrefs}

\usepackage{xcolor}

\usepackage[breaklinks=true,colorlinks=true,linkcolor=blue,citecolor=red,urlcolor=cyan]{hyperref}
\usepackage{orcidlink}
\allowdisplaybreaks
\usepackage[margin=1in]{geometry}

\newcommand \on{\overline{\nabla}}
\newcommand \Ga{\Gamma}
\newcommand \la{\lambda}

\newcommand \br{\mathbb{R}}

\newcommand \Span{\operatorname{Span}}
\newcommand \id{\operatorname{id}}
\newcommand \<{\langle}
\renewcommand \>{\rangle}
\newcommand \ip{\< \cdot, \cdot \>}

\newcommand \cR{\mathcal{R}}

\newcommand \rk{\operatorname{rk}}

\newcommand \tM{\overline{M}}

\newcommand{\grad}{\mbox{grad\,}}
\newcommand{\secf}{\mathrm{II}}
\theoremstyle{plain}
\newtheorem{theorem}{Theorem}
\newtheorem*{theorem*}{Theorem}
\newtheorem*{corollary*}{Corollary}

\newtheorem*{conj*}{Conjecture}
\newtheorem{lemma}{Lemma}
\newtheorem*{lemma*}{Lemma}
\newtheorem{proposition}{Proposition}
\newtheorem*{prop*}{Proposition}

\theoremstyle{definition}

\newtheorem*{definition*}{Definition}
\newtheorem*{construction*}{Construction}
\theoremstyle{remark}
\newtheorem{remark}{Remark}
\newtheorem{example}{Example}

\theoremstyle{plain}
\newtheorem{maintheorem}{Theorem}

\makeatletter
\@namedef{subjclassname@2020}{%
  \textup{2020} Mathematics Subject Classification}
\makeatother

\title[Constant curvature hypersurfaces of products of space forms]{Hypersurfaces with constant sectional curvature of a product of two space forms}

\author[Jihun Kim]{Jihun Kim \orcidlink{0000-0001-5501-1207}}
\address{Department of Mathematics, KU Leuven, Celestijnenlaan 200B -- Box 2400, 3001 Leuven, Belgium}
\email{jihun.kim@kuleuven.be}

\author[Yuri Nikolayevsky]{Yuri Nikolayevsky \orcidlink{0000-0002-9528-1882}}
\address{Department of Mathematical and Physical Sciences, La Trobe University, Melbourne, Victoria, 3086, Australia}
\email{y.nikolayevsky@latrobe.edu.au}

\author[Ruy Tojeiro]{Ruy Tojeiro \orcidlink{0000-0002-6418-8167}}
\address{Instituto de Ciências Matemáticas e de Computação, Universidade de São Paulo, Av. Trabalhador São Carlense 400, São Carlos, 13566-590, Brazil}
\email{tojeiro@icmc.usp.br}

\thanks{The first-named author was partially supported by FWO and FNRS under EOS project G0I2222N. 
\\
\indent The second-named author was partially supported by ARC Discovery grant DP210100951. 
\\
\indent The third-named author was partially supported by Fapesp grant 2022/16097-2. 
}

\subjclass[2020]{53B25, 53C40}
\keywords{Hypersurfaces with constant sectional curvature, products of space forms}

\begin{document}

\begin{abstract}
We complete the classification of hypersurfaces with constant sectional curvature of dimension greater than or equal to three in the product of two space forms.
\end{abstract}

\maketitle

\section{Introduction}\label{sec:intro}

The investigation of submanifolds with constant sectional curvature of different ambient spaces, from both local and global viewpoints, is a classical and relevant topic in Differential Geometry, which traces back to the nineteenth century with the beautiful theory of surfaces with constant Gaussian curvature in three-dimensional Euclidean space  (see, e.g.,  \cites{Bi,Da}; see also \cite{Gal} and the references therein for more recent results on the subject).

  For hypersurfaces with constant sectional curvature of dimension $n\geq 3$, the classification in space forms is well known  (see, e.g., \cite{DT}). 
  A natural extension of this classification problem is provided by hypersurfaces in a product $\tM^{n+1}=\tM^{k}(\kappa_1)\times \tM^{n-k+1}(\kappa_2)$ of two space forms with curvatures $\kappa_1$ and $\kappa_2$, for which several results are known.
    In case $\kappa_1=0\ne\kappa_2$ and $k=1$, a classification was given in \cite[Theorems~7.1 and 7.2]{MaT}, after the surface case was addressed in \cites{AEG1,AEG2,ALP} from a global point of view. The classification has recently been extended in \cite[Theorems~11 and 12]{CT} to the case $k\ge 2$. In the cases $\kappa_1=\kappa_2=\pm1$, with $k=2$ and $n=3$, a classification has been achieved  in \cite[Theorem~1.1]{LVWY} and \cite[Theorem~1.1]{LVWYhyp}, respectively.
    
This article aims to complete the classification of all 
hypersurfaces with constant sectional curvature of dimension $n\geq 3$ in a product 
$\tM^{n+1}=\tM^{k}(\kappa_1)\times \tM^{n-k+1}(\kappa_2)$  of two space forms, at most one of which is flat. 

The following is a qualitative statement of our classification. A detailed and more precise formulation will be given in Section \ref{ss:classification} as Theorem~\ref{mainthmb}. In part~\ref{it:rough2} below, we view $\tM^{n+1}$ embedded in the pseudo-Euclidean space $\mathbb{R}^{n+3, \sigma}$ with a (flat) metric of index $\sigma =1, 2$, as the product of the standard embeddings of its constant curvature factors as hyperquadrics in the corresponding Euclidean or Lorentzian spaces.

\begin{maintheorem} \label{thm:rough} 
Let $f\colon M^n\to \tM^{n+1}$, $n\geq 3$, be a {\rm(}smooth{\rm)} isometric immersion of a space of constant sectional curvature $M^n = M^n(\kappa)$ in a product $\tM^{n+1}=\tM^{k}(\kappa_1)\times \tM^{n-k+1}(\kappa_2)$ of two space forms.
\begin{enumerate}[label={\rm{(\roman*)}}]
\item\label{it:rough1} If $\kappa_1 \kappa_2 \ne 0$, then $\kappa = \frac{\kappa_1\kappa_2}{\kappa_1+\kappa_2}$ and one of the following holds:
 \begin{enumerate}[label={\rm{(\roman{enumi}.\alph*)}}]
\item\label{it:rough2} Either  $\kappa_1\kappa_2<0$, with $\kappa_1+\kappa_2<0$, or $\kappa_1,\kappa_2<0$, and $f(M^n)$ is an open subset of the intersection of the embedded $\tM^{n+1} \subset \mathbb{R}^{n+3, \sigma}$, $\sigma\in\{1,2\}$, with a degenerate hyperplane;
\item\label{it:rough3} $n=3$, $k = 2$, and $f(M^3)$ is obtained from a hypersurface with constant sectional curvature $\varepsilon/2$ in the product $\tM^2(\varepsilon) \times \tM^2(\varepsilon), \varepsilon = \pm 1$, {\rm(}such hypersurfaces are classified in \cites{LVWY,LVWYhyp}{\rm)} by an affine rescaling procedure.\vspace{.5ex}
\end{enumerate}
\item\label{it:rough4} If one of the factors is flat, then $f$ is as given in \cite[Theorems~7.1 and 7.2]{MaT} for $k=1$ and  {\cite[Theorems~11 and 12]{CT}} for $k\ge2$.
\end{enumerate}
\end{maintheorem}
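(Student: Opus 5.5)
Part~\ref{it:rough4} is already known: for $k=1$ it is \cite[Theorems~7.1 and 7.2]{MaT}, and for $k\ge2$ it is \cite[Theorems~11 and 12]{CT}. So the work is in part~\ref{it:rough1}, where $\kappa_1\kappa_2\ne0$. Throughout I use the standard set-up for hypersurfaces of products. Write $P$ for the orthogonal projection onto the first factor; it is a symmetric, parallel $(1,1)$-tensor on $\tM^{n+1}$ with $P^2=P$. Along $f$, decompose $PX = TX + \langle X, t\rangle N$ for tangent $X$, where $N$ is the unit normal, $t = (PN)^{\top}$ and $PN = t + \nu N$. The curvature of $\tM^{n+1}$ is $\kappa_1 R_P + \kappa_2 R_{I-P}$, where $R_Q(X,Y)Z=\langle QY,Z\rangle QX-\langle QX,Z\rangle QY$. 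Then the Gauss equation, with $A$ the shape operator, reads
\[
\kappa(X\wedge Y) = \kappa_1 (TX\wedge TY) + \kappa_2\big((I-T)X\wedge (I-T)Y\big) + AX\wedge AY .
\]
The Codazzi equation and the parallelism of $P$ give $\nabla T = $ terms in $t$ and $A$, and $\nabla t = (\nu I - T)A$-type terms.

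The first step is algebraic. I diagonalise $T$; it is symmetric with eigenvalues in $[0,1]$, and in fact $T$ has eigenvalue $1$ and eigenvalue $0$ with multiplicities at least $k-1$ and $n-k$ respectively, and at most one further eigenvalue associated with $t$. I then feed this into the Gauss equation above. The right-hand side equals $\kappa_1 T\wedge T + \kappa_2 (I-T)\wedge(I-T)$ plus a rank-limited term $A\wedge A$, and since $n\ge3$ the operator $A\wedge A$ is determined by $A$ up to sign. Comparing the restrictions to the eigenspaces of $T$ forces $A$ to be small: essentially $A$ is rank one or two, concentrated on $t$. Restricting to the $1$- and $0$-eigenspaces, when they are large enough, yields $\kappa=\kappa_1+a^2$ and $\kappa=\kappa_2+b^2$. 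Evaluating on mixed planes then gives $\kappa = \kappa_1\kappa_2/(\kappa_1+\kappa_2)$. The low-multiplicity situations, where an eigenspace is too small for this restriction argument, have to be handled separately; the main such case is $n=3$, $k=2$, where both eigenspaces of $T$ are one-dimensional.

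The second step is the generic case. Here the Codazzi equation together with the formula for $\nabla t$ shows that the position vector $F$ of $f$ in $\mathbb{R}^{n+3,\sigma}$ satisfies $\langle F,\xi\rangle=\text{const}$ for a constant null vector $\xi$. Concretely, I would build $\xi$ as a combination of $N$, $t$ and the two factor position vectors, and check that $\xi$ has derivative zero. Then $f(M)$ lies in a degenerate hyperplane. The requirement that the intersection be nondegenerate with the correct curvature gives the sign conditions: either $\kappa_1\kappa_2<0$ with $\kappa_1+\kappa_2<0$, or both $\kappa_i<0$. This is case~\ref{it:rough2}.

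The third step, which I expect to be the main obstacle, is the exceptional case $n=3$, $k=2$, in which the algebra leaves an extra free function. My plan is to show that the structure equations (Gauss, Codazzi, and those for $T$, $t$, $\nu$) are invariant under the affine map that rescales the two factors to $\tM^2(\varepsilon)\times\tM^2(\varepsilon)$ with $\varepsilon=\pm1$. The idea is to write $\kappa_i=\varepsilon\lambda_i^{-2}$ and apply the linear map $\mathrm{diag}(\lambda_1^{-1},\lambda_2^{-1})$ on the ambient $\mathbb{R}^{6,\sigma}$. I would then check that this carries hypersurfaces of constant curvature $\kappa_1\kappa_2/(\kappa_1+\kappa_2)$ to hypersurfaces of constant curvature $\varepsilon/2$, and conversely. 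That reduces the case to the classifications in \cite{LVWY,LVWYhyp} and gives case~\ref{it:rough3}. The delicate point is verifying that the induced metric rescales conformally-constantly only under the constant-curvature constraint, which I would check directly in the eigenframe of $T$.
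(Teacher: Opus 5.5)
Your proposal has genuine gaps, in the first step for $n\ge4$ and more seriously in the case $n=3$.

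\textbf{The first step ($n\ge4$).} As stated it is not correct. In the examples of case (i.a) the shape operator ($A$ in your notation) is $aI+b\cR$, with $\cR=I-T$ and $a(a+b)=\kappa\neq0$. So it has rank at least $n-1$, and is not ``rank one or two, concentrated on $t$''. Your next sentence ($\kappa=\kappa_1+a^2$, $\kappa=\kappa_2+b^2$) tacitly assumes the opposite: that $A$ preserves the eigenspaces of $T$ and is umbilical on each. That is precisely what has to be proved. Once it is known, your mixed-plane computation $(\kappa-\kappa_1)(\kappa-\kappa_2)=\kappa^2$ is fine.

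The paper gets this structure by regarding $f$ as a codimension-two submanifold of the space form $\tM^{n+2,\rho}(\tilde\kappa)$, $\tilde\kappa=\kappa_1\kappa_2/(\kappa_1+\kappa_2)$, which contains $\tM^{n+1}$ as a hypersurface:
\begin{itemize}
\item composing with an umbilical inclusion into $\tM^{n+3,\delta}(\kappa)$ and applying Lemma~\ref{lem:flatbi} forces $\kappa=\tilde\kappa$;
\item the same lemma applied to the flat form $\secf_{\tilde f}$ then excludes $\rho=0$ (this is your sign condition);
\item it also forces $\mathcal{I}(\secf_{\tilde f})$ to be degenerate, which is exactly $S=aI+b\cR$ and yields the light-like normal field $Z_2$.
\end{itemize}
A direct analysis of $\Lambda^2A$ in the eigenframe of $T$ could replace the Cartan--Moore lemma for $n\ge4$. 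But you would then have to handle several cases, none of which appear in the proposal:
\begin{itemize}
\item $k=2$ and $k=n-1$, via planes containing $\zeta$, since one eigenspace is a line for every $n$, not only for $n=3$;
\item the points where $\kappa$, $\kappa-\kappa_1(1-r)$ or $\kappa-\kappa_2r$ vanish;
\item the points where $\zeta=0$.
\end{itemize}
In your second step, $Z_2$ is parallel only up to a factor: $\nabla^\perp_XZ_2=c\<X,\zeta\>Z_2$ with $c$ constant. So the constant null vector (your $\xi$, the paper's $w$) is $\lambda Z_2$ with $\grad\log\lambda=-c\,\zeta$. Such $\lambda$ exists because $\zeta$ is locally a gradient (Lemma~\ref{lem:grad}, which uses $S\cR=\cR S$). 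A combination with constant coefficients is not parallel, and $t$ cannot appear, since the vector must be normal.

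\textbf{The case $n=3$.} This gap is more serious.
\begin{itemize}
\item The Gauss equation alone does not determine $\kappa$. If $\rk S=3$, then $\Lambda^2S$ is, via the Hodge star, the cofactor matrix of $S$. Hence $S$ is diagonal in the eigenframe $(E_1,E_2,E_3)$ of $\cR$ (eigenvalues $0,1,r$). The conditions $\lambda_1\lambda_2=\kappa$, $\lambda_1\lambda_3=\kappa-\kappa_1(1-r)$, $\lambda_2\lambda_3=\kappa-\kappa_2r$ are then solvable whenever $\kappa(\kappa-\kappa_1(1-r))(\kappa-\kappa_2r)>0$, so no value of $\kappa$ is forced. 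The paper needs the extra input \cite[Lemma~6(a)]{KNT} here.
\item The case $n=3$, $k=2$ does not only produce (i.b). Example~\ref{ex:int1} with $n=3$ has $\rk S=3$ and belongs to (i.a). It lives in a product with $\kappa_1\kappa_2<0$, where your rescaling to $\tM^2(\varepsilon)\times\tM^2(\varepsilon)$ is not even defined.
\item You must therefore split by $\rk S$ (rank $\le1$ is excluded by Lemma~\ref{lem:rks01}). For $\rk S=2$ you must prove, before rescaling, that $\kappa=\tilde\kappa$, $\kappa_1\kappa_2>0$, $\ker S=\mathcal{N}(\secf_{\tilde f})=\Span(\zeta)$, and $r\equiv\kappa_1/(\kappa_1+\kappa_2)$, i.e.\ that the product angle function is constant.
\end{itemize}
These facts are what make the rescaling work. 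It rescales the three eigendirections of $\cR$ by different factors (compare~\eqref{eq:newbasis}), so the induced metric changes conformally only if $\kappa_1=\kappa_2$. The curvature stays constant only because $r$ is constant, $S_{13}=S_{23}=S_{33}=0$, and the determinant of the remaining $2\times2$ block changes by a constant factor. Your assertion that the rescaling takes every hypersurface of curvature $\kappa_1\kappa_2/(\kappa_1+\kappa_2)$ to one of curvature $\varepsilon/2$ is therefore unjustified, and false in general.

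Finally, for $\kappa_1,\kappa_2<0$ both (i.a) and (i.b) occur in the same ambient space. You must therefore also exclude a hypersurface that passes from one type to the other across the set where $\rk S$ drops. The paper does this by a limiting argument on the principal curvatures~\eqref{eq:pc}.
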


 Explicit \emph{global} isometric immersions  $f\colon  M^n(\kappa)\to \tM^{k}(\kappa_1)\times \tM^{n-k+1}(\kappa_2)\subset \mathbb{R}^{n+3, \sigma}$, $n\geq~3$, whose images are intersections of  $\tM^{k}(\kappa_1)\times \tM^{n-k+1}(\kappa_2)\subset \mathbb{R}^{n+3, \sigma}$ with a degenerate hyperplane of $\mathbb{R}^{n+3, \sigma}$ as in part~\ref{it:rough2} are provided in Section \ref{examples} (see Examples~\ref{ex:int1} and~\ref{ex:int2}). In Subsection 
\ref{ss:alternative}, it is shown that they have the structure of \emph{doubly rotational} hypersurfaces, and explicit parametrisations of them are given. The rescaling procedure in part~\ref{it:rough3} is described in detail in Section \ref{rescaling}.

As a consequence of Theorem \ref{thm:rough}, there does not exist any hypersurface
with constant sectional curvature and dimension $n\geq 3$ in a product $\tM^{k}(\kappa_1)\times \tM^{n-k+1}(\kappa_2)$ of two spaces of constant sectional curvatures $\kappa_1$  and $\kappa_2$ such that
 $\kappa_1\kappa_2<0$ and  $\kappa_1+\kappa_2\geq 0$. 

 Inspired by the hypersurfaces in part~\ref{it:rough2} of Theorem \ref{thm:rough}, we also obtain the following geometric description of the flat hypersurfaces of $\mathbb{R}^k\times \mathbb{H}^{n-k+1}$, $1\leq k\leq n-3$, which comprise part of those in item~\ref{it:rough4} of Theorem \ref{thm:rough}.

 \begin{maintheorem} \label{thm: flatHrough} 
If $f\colon M^n\to \mathbb{R}^k\times \mathbb{H}^{n-k+1}\subset \mathbb{R}^k\times \mathbb{R}^{n-k+2, 1}=\mathbb{R}^{n+2, 1}$, $1\leq k\leq n-3$,  
is a flat hypersurface, then $f(M^n)$ is an open subset of the intersection of $\mathbb{R}^k\times \mathbb{H}^{n-k+1}$ with an affine hyperplane of $\mathbb{R}^{n+2, 1}$ orthogonal to a light-like vector in $\mathbb{R}^{n+2, 1}$.
\end{maintheorem}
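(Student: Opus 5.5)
Our approach is to work directly in the flat ambient space $\mathbb{R}^{n+2,1}=\mathbb{R}^k\times\mathbb{R}^{n-k+2,1}$ and show that a suitable light-like vector $\xi$ satisfies $\langle F,\xi\rangle=\mathrm{const}$, where $F\colon M^n\to\mathbb{R}^{n+2,1}$ is the composition of $f$ with the inclusion. This is exactly the statement that $f(M^n)$ lies in an affine hyperplane orthogonal to $\xi$.

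\textbf{Step 1: the hypersurface structure.} Write the unit normal of $f$ in $\tM^{n+1}$ as $N=(N_1,N_2)$. Let $T$ be the tangent part of the unit vector field spanning the $\mathbb{R}^k$-direction in the relevant sense, as is standard for hypersurfaces of products. Since $M^n$ is flat, the Gauss equation reads
\[
0=\langle AX,W\rangle\langle AY,Z\rangle-\langle AX,Z\rangle\langle AY,W\rangle+\overline{R}(X,Y,Z,W).
\]
Here $\overline{R}$ is the curvature of $\mathbb{R}^k\times\mathbb{H}^{n-k+1}$, which is $-1$ times the curvature tensor of the second factor. Decompose $TM=\mathcal{D}_1\oplus\mathcal{D}_2$ according to the projections, with the standard notation $\mathcal{D}_1=TM\cap(\mathbb{R}^k)$ of dimension at least $k-1$, and similarly for $\mathcal{D}_2$.

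Restrict the Gauss equation to $X,Y\in TM\cap T\mathbb{H}^{n-k+1}$, a subspace of dimension at least $n-k\ge 3$; this is where the hypothesis $k\le n-3$ enters. There $\overline{R}$ has constant curvature $-1$. Hence $A$ restricted to this subspace has rank considerations as in the classical hypersurface argument in $\mathbb{H}$: the form $\langle A\cdot,\cdot\rangle$ restricted to a subspace of dimension at least $3$ satisfies the Gauss equation of a flat hypersurface of hyperbolic space. By the classical algebraic lemma (Cartan), on that subspace $A=\pm\operatorname{id}+\text{(rank}\le 1)$. The same equation on mixed and $\mathcal{D}_1$ pairs then forces $A$ to have the form
\[
A=\lambda\,\mathrm{Id}+\mu\, T^\flat\otimes T
\]
with $\lambda^2=\|N_2\|^2$. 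The dimension condition makes the Cartan lemma applicable; getting the exact algebraic form cleanly is the main obstacle, and I would first try to use the known description from \cite{CT} for $k\ge2$ and \cite{MaT} for $k=1$, which already gives this structure.

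\textbf{Step 2: candidate light-like vector.} Write $F=(F_1,F_2)$ and let $\hat N=(N_1,N_2)\in\mathbb{R}^{n+2,1}$. Since $\mathbb{H}^{n-k+1}$ is the hyperquadric $\langle F_2,F_2\rangle=-1$, its normal in $\mathbb{R}^{n-k+2,1}$ is $F_2$ itself; accordingly set
\[
\xi=\hat N+\epsilon\,\lambda^{-1}(0,F_2)+c\,(\text{something in }\mathbb{R}^k).
\]
The guiding model is the horosphere in $\mathbb{H}$, where $\xi=N+F$. Differentiate $\xi$ using the Gauss formula of the composite immersion, the shape operator form from Step 1, and Codazzi to compute $\mu$ and the derivatives of $\lambda$ and $N_1$. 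The goal is to choose the coefficients so that $d\xi=0$, $\langle\xi,\xi\rangle=0$, and $\langle dF,\xi\rangle=0$ (this last via the tangency of $T$).

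\textbf{Step 3: conclusion.} Then $\langle F,\xi\rangle$ is constant, which gives the result, after checking that $\xi\ne0$.

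For the expected difficulty: if $\lambda=0$ somewhere, the normal lies in $\mathbb{R}^k$ and the hypersurface is $\Sigma\times\mathbb{H}$-like, which is not flat. By the Gauss equation this case is excluded, using $n-k\ge 3$ again.
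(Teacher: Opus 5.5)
Your overall strategy (produce a constant light-like vector normal to $\tilde f=j\circ f$) is the one the paper uses. However, the structural claim that closes Step~1 is false, and Step~2 is missing the ingredient that makes the vector constant.

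In the paper's notation, flatness and \eqref{Gauss} with $\kappa_1=0$, $\kappa_2=-1$ give $SX\wedge SY=\cR X\wedge\cR Y$ for all $X,Y$. This one identity already contains your ``mixed and $\mathcal{D}_1$'' equations. Since $\cR=I$ on $\ker(I-\cR)$, which has dimension $\ge n-k\ge 3$, we have $\operatorname{rank}\cR\ge 3$. The elementary fact behind the Beez--Killing theorem then gives $S=\pm\cR$: if $\cR X,\cR Y_1,\cR Y_2$ are independent, then $SX\in\Span\{\cR X,\cR Y_1\}\cap\Span\{\cR X,\cR Y_2\}$, whence $S=c\,\cR$ with $c^2=1$. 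The paper reaches the same conclusion through Lemma~\ref{lem:flatbi}. There $\secf_{\tilde f}$ is flat with values in the Lorentzian plane $\Span\{j_*\xi,\nu_2\}$. A nondegenerate image would force relative nullity $\ge n-2$, while the nullity is $\ker S\cap\ker\cR$, of dimension $\le k\le n-3$. So the image is spanned by a light-like vector $j_*\xi\mp\nu_2$, which amounts to $S=\pm\cR$.

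Consequently $S$ is $\pm1$ on $\ker(I-\cR)$, $0$ on $\ker\cR$ (dimension $\ge k-1$), and $\pm\|N_1\|^2$ on $\zeta$. For $k\ge 2$ and $\zeta\neq 0$ that gives three distinct eigenvalues, so $S$ is not of the form $\lambda\,\mathrm{Id}+\mu\,T^\flat\otimes T$. For $k=1$ it is of that form, but with $\lambda=\pm1$ rather than $\lambda^2=\|N_2\|^2$. Your own restriction argument already shows this. In dimension $\ge 3$, the relations $\lambda_i\lambda_j=1$ for all $i\ne j$ force the compression of $S$ to $\ker(I-\cR)$ to be exactly $\pm I$. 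That is incompatible with $\lambda^2=\|N_2\|^2$ unless $N_1=0$.

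With this, your ansatz in Step~2 collapses. Here $\hat N=j_*\xi$ and $(0,F_2)=\nu_2$, and every normal vector of $\tilde f$ lies in $\Span\{\hat N,(0,F_2)\}$. Where $N_2\neq 0$ this forces your $\mathbb{R}^k$-correction to vanish. With coefficient $1$ on $\hat N$, light-likeness then forces $\xi=\nu:=\hat N\mp(0,F_2)$. For the appropriate sign $S_\nu=0$, but $\nabla^\perp_X\nu=\mp\langle X,\zeta\rangle\nu$. So $\nu$ is constant only if $\zeta\equiv 0$, i.e.\ only for $\mathrm{id}\times(\text{horosphere})$. For the hypersurfaces with $w\notin\mathbb{R}^{n-k+2,1}$, no choice of your coefficients yields $d\xi=0$.

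The missing idea is a \emph{non-constant} rescaling $w=\varphi\,\nu$ with $d\log\varphi=\pm\zeta^\flat$. This can be solved locally because $\zeta^\flat$ is closed: by \eqref{derS2}, $\nabla\zeta=(tI-\cR)S$, which is symmetric since $S=\pm\cR$ commutes with $\cR$ (Lemma~\ref{lem:grad}). Then $S_w=0$ and $\nabla^\perp w=0$, so $w$ is a constant light-like vector and $\langle\tilde f,w\rangle$ is constant, which is the statement. Your discussion of points where $\lambda=0$ becomes unnecessary, since the eigenvalue of $S$ on $\ker(I-\cR)$ is $\pm1$.
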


A more detailed version of Theorem \ref{thm: flatHrough}  is given by Theorem \ref{thm: flatH} in Section~\ref{ss:flat}, where the hypersurfaces 
 of  $\mathbb{R}^k\times \mathbb{H}^{n-k+1}$, $1\leq k\leq n-3$, which arise in the way described in the statement, are determined according to the relative position of the light-like vector with respect to the orthogonal decomposition $\mathbb{R}^{n+2, 1}=\mathbb{R}^k\times \mathbb{R}^{n-k+2, 1}$. Theorem \ref{thm: flatH} provides a global version, for $2\leq k\leq n-3$, of the local classification of  flat hypersurfaces of $\mathbb{R}^k\times \mathbb{H}^{n-k+1}$ obtained in \cite[Theorem~12(iii)]{CT}.

\vspace{1ex}
     This article is part of a more general program of classifying all locally symmetric hypersurfaces of a globally symmetric space, which is completed in our paper \cite{KNT}. 

\begin{remark}
    It will follow from \cite[Corollary~1]{KNT} that no hypersurfaces of constant sectional curvature exist in the product of \emph{more than two} space forms (no more than one of which being flat). Thus, the results presented in this paper complete the classification of hypersurfaces of constant sectional curvature in arbitrary products of space forms.
\end{remark}

\section{Preliminaries}\label{sec:hypprod}
In this section, following \cite{LTV}, we summarise the basic equations for hypersurfaces in a product of two space forms. Throughout this paper, we assume that at most one of the factors is flat.

Given an isometric immersion $f\colon M^n \to \tM^{n+1}=\tM^{k}(\kappa_1)\times \tM^{n-k+1}(\kappa_2)$ and a (local) unit normal vector field $\xi$, we define a smooth vector field $\zeta$ on $M^n$, $t\in C^\infty(M^n)$ and $\cR \in \mathrm{End}(TM^n)$ by
\begin{equation}\label{zeta}
\pi_2^* \xi = f_*\zeta + t\xi
\end{equation}
and
\begin{equation}\label{R}
\pi_2^* (f_*X) = f_*(\cR X) + \<X,\zeta\>\xi,
\end{equation}
for all $X\in \Gamma(TM^n)$, which imply that
\begin{equation}\label{zeta2}
\pi_1^* \xi = -f_*\zeta + (1-t)\xi
\end{equation}
and
\begin{equation}\label{R2}
\pi_1^* (f_*X) = f_*((I-\cR) X) - \<X,\zeta\>\xi,
\end{equation}
for all $X\in \Gamma(TM^n)$.
Here $\pi_i$ is the orthogonal projection onto the $i^{th}$--factor of $\tM^{n+1}$, with its derivative $\pi_i^*$, $1\leq i\leq 2$ (apart from orthogonal projections, we use the subscript~$_*$, such as $f_*$, to denote the differential of immersions). It follows from \eqref{R} that $\cR=L^tL$, where $L=\pi_2^*f_*$ and $L^t$ is the transpose of $L$; hence, $\cR$ is a symmetric endomorphism. 

Applying $(\pi_2^*)^2=\pi_2^*$ to tangent and normal vector fields, and using \eqref{zeta} and \eqref{R}, gives
\begin{equation}\label{propR}
    \cR(I-\cR)X = \<X,\zeta\>\zeta,\quad((1-t)I - \cR)\zeta = 0,\quad t(1-t)=\|\zeta\|^2
\end{equation}
for all $X\in\Ga(TM^n)$. By the first equation in \eqref{propR}, for each $x\in M^n$, either $\zeta$ vanishes at $x$ and $T_xM^n$ decomposes orthogonally as $T_xM^n=\ker \cR(x) \oplus \ker (I-\cR(x))$, or  $T_xM^n$ decomposes orthogonally as $T_xM^n=\ker \cR(x) \oplus \ker (I-\cR(x))\oplus \Span\{\zeta(x)\}$. Note, by~\eqref{zeta},~\eqref{zeta2}, and the last equation in \eqref{propR}, that $\zeta(x)=0$ if and only if one of $\pi_i^* \xi(x)$, $i=1,2$, vanishes. 

From $\zeta(x)\in (\ker \cR(x))^\perp$ and \eqref{R}, we have $X\in \ker \cR(x)$ if and only if $(\pi_2^*)f_*X~=~0$.  Similarly, $X\in \ker (I-\cR(x))$ if and only if  $(\pi_1^*)f_*X=0$. It follows easily that
\begin{equation}\label{dimkerR}
    k-1\leq \dim \ker \cR(x)\leq k \,\,\,\,\mbox{and}\,\,\,\,n-k\leq \dim \ker (I-\cR(x))\leq n-k+ 1
    \end{equation}
     at any $x\in M^n$.  Thus, if  $\zeta(x)\neq 0$ and $2\leq k\leq n-1$, in view of the second equation in \eqref{propR}, the eigenvalues of $\cR$ are $0$, $1$ and $r=1-t$, the latter having $\zeta$ as an eigenvector. 
Moreover,  $\cR$ (respectively, $I-\cR$) vanishes identically if and only if $k=n$ and $f(M^n)$ is contained in a slice $\tM^{n}(\kappa_1) \times \{x_2\}$, $x_2\in \tM^{1}(\kappa_2)$
    (respectively, $k=1$ and $f(M^n)$ is contained in a slice  $\{x_1\}\times \tM^{n}(\kappa_2)$, $x_1\in \tM^{1}(\kappa_1)$). 
   
  The \emph{product angle function} of a hypersurface $f\colon M^n\to \tM^k(\kappa_1)\times \tM^{n-k+1}(\kappa_2)$, with a unit normal vector field $\xi$, is defined by $C=\langle P\xi, \xi\rangle$, where  $P$ is the product structure of $\tM^k(\kappa_1)\times \tM^{n-k+1}(\kappa_2)$, given by $P(X_1, X_2)=(X_1, -X_2)$
for tangent vector fields $X_1$ and $X_2$ on $\tM^k(\kappa_1)$ and $\tM^{n-k+1}(\kappa_2)$, respectively.
Thus, from~\eqref{zeta} and~\eqref{zeta2} we obtain
\begin{equation}\label{angle}
C=\langle \pi_1^*\xi-\pi_2^*\xi, \xi\rangle=1-2t=2r-1.
\end{equation}

A hypersurface $f\colon M^n\to \tM^{k}(\kappa_1) \times \tM^{n-k+1}(\kappa_2)$ is said to be \emph{decomposable} if it satisfies one of the following:
\begin{itemize}
    \item $M^n$ is (isometric to) a Riemannian product $N^{k-1}\times \tM^{n-k+1}(\kappa_2)$ and $$f=f_1\times \id\colon N^{k-1}\times \tM^{n-k+1}(\kappa_2)\to \tM^{k}(\kappa_1) \times \tM^{n-k+1}(\kappa_2);$$
    \item $M^n$ is (isometric to) a Riemannian product  $\tM^k(\kappa_1)\times N^{n-k}$ and $$f=\id\times f_2\colon \tM^k(\kappa_1)\times N^{n-k}\to \tM^{k}(\kappa_1) \times \tM^{n-k+1}(\kappa_2),$$
\end{itemize}
where $f_1\colon N^{k-1}\to \tM^{k}(\kappa_1)$ and $f_2\colon N^{n-k}\to \tM^{n-k+1}(\kappa_2)$ are hypersurfaces and $\id$ denotes the respective identity map. We say that the hypersurface $f$ is \emph{locally decomposable} if, for each point $x\in M^n$, there exists an open neighbourhood isometric to a Riemannian product, restricted to which $f$ is decomposable.

The next proposition gives some characterisations of local decomposability of a hypersurface $f\colon M^n\to \tM^{k}(\kappa_1) \times \tM^{n-k+1}(\kappa_2)$.

 \begin{proposition}\label{splits}
     For a hypersurface $f\colon M^{n}\to \tM^{k}(\kappa_1)\times \tM^{n-k+1}(\kappa_2)$, $2\leq k\leq n-1$, the following are equivalent:
     \begin{enumerate}[label={{\rm(\alph*)}}]
         \item $f$ is locally decomposable.
         \item The tangent vector field $\zeta$ vanishes everywhere.
         \item The product angle function $C\equiv \pm 1$.
         \item $\dim\ker \cR = k$ or $\dim\ker (I-\cR)=n-k+1$ everywhere.
     \end{enumerate}
     Moreover, if, in addition, $M^n$ is complete and simply connected, then any of the above conditions implies that $f$ is decomposable.
\end{proposition}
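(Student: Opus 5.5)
The equivalences (b)$\Leftrightarrow$(c)$\Leftrightarrow$(d) are pointwise linear algebra, and the real content is (b)$\Rightarrow$(a). I will prove (a)$\Rightarrow$(d)$\Rightarrow$(b)$\Rightarrow$(a) together with (b)$\Leftrightarrow$(c), and then treat the global statement.

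\emph{Pointwise equivalences.} By the last equation in \eqref{propR}, $\|\zeta\|^2=t(1-t)$. Hence $\zeta(x)=0$ if and only if $t(x)\in\{0,1\}$. By \eqref{angle}, this happens if and only if $C(x)=\pm1$, which gives (b)$\Leftrightarrow$(c).

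For (b)$\Rightarrow$(d), suppose $\zeta(x)=0$. Then $T_xM^n=\ker\cR(x)\oplus\ker(I-\cR(x))$, and by \eqref{dimkerR} the dimensions are either $(k,n-k)$ or $(k-1,n-k+1)$. Conversely, if $\dim\ker\cR(x)=k$ or $\dim\ker(I-\cR(x))=n-k+1$ and $\zeta(x)\neq0$, then $\ker\cR(x)\oplus\ker(I-\cR(x))\oplus\Span\{\zeta(x)\}$ would have dimension at least $k+(n-k)+1=n+1$ (respectively $(k-1)+(n-k+1)+1=n+1$). This contradicts $\dim T_xM^n=n$, so (d)$\Rightarrow$(b) pointwise.

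The pointwise statement of (d) is an ``or''. Since $C$ is continuous and takes values in $\{\pm1\}$ on $M^n$, on each connected component one alternative holds throughout. So, after working locally, assume $C\equiv1$, i.e.\ $t\equiv0$. Then $\pi_2^*\xi=0$, so $\xi$ is tangent to the first factor. Also $\ker\cR$ has dimension $k-1$ and $\ker(I-\cR)$ has dimension $n-k+1$, and $f_*\ker(I-\cR)$ is the full tangent space of the second factor. The case $C\equiv-1$ is symmetric.

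For (a)$\Rightarrow$(d), note that for $f=f_1\times\id$ the normal is $(\xi_1,0)$, so $t=0$. Similarly, for $\id\times f_2$ we get $t=1$.

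\emph{Main step: (b)$\Rightarrow$(a).} Consider the distributions $E_0=\ker\cR$ and $E_1=\ker(I-\cR)$. I would use the Codazzi-type structure equations from \cite{LTV}. Differentiating \eqref{zeta} and \eqref{R} with $\zeta=0$, $t=0$ gives
\[
(\nabla_X\cR)Y=\<Y,\zeta\>AX+\<AY,X\>\zeta-\ldots
\]
and $\nabla_X\zeta=(I-\cR)AX$-type terms, up to the exact form in \cite{LTV}. With $\zeta\equiv0$ these yield $\nabla\cR=0$ and $\cR A=0$. The first shows that $E_0$ and $E_1$ are parallel, so by de Rham's local decomposition $M^n$ is locally a Riemannian product $N_0\times N_1$. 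The second gives $A(TM)\subset E_0$, so $AE_1=0$.

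The main obstacle is to turn this into the statement that $f$ restricted to a product neighbourhood is $f_1\times\id$. Since $f_*E_1$ is tangent to the second factor and has full dimension $n-k+1$, $\pi_2\circ f$ restricted to a leaf of $E_1$ is a local isometry onto an open subset of $\tM^{n-k+1}(\kappa_2)$. Moreover $\pi_2\circ f$ is constant along leaves of $E_0$, because $\pi_2^*f_*E_0=0$. Likewise, $\pi_1\circ f$ is constant along $E_1$-leaves and restricts to a hypersurface $f_1$ of $\tM^k(\kappa_1)$ on $E_0$-leaves. Hence locally $f=f_1\times\phi$ with $\phi$ a local isometry, which can be absorbed into the identification of the neighbourhood, giving local decomposability.

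\emph{Global statement.} If $M^n$ is complete and simply connected, the global de Rham theorem makes $M^n=N_0\times N_1$ globally, with $f=f_1\times\phi$. Here $\phi\colon N_1\to\tM^{n-k+1}(\kappa_2)$ is a local isometry from a complete manifold, hence a covering map. Its domain is simply connected and covers the space form; identifying $N_1$ with $\tM^{n-k+1}(\kappa_2)$ via $\phi$ (taking the space form to be the simply connected model) yields $f=f_1\times\id$.
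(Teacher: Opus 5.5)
Your proof is correct. For (b)$\Leftrightarrow$(c)$\Leftrightarrow$(d) it matches the paper's argument: the identity $\|\zeta\|^2=t(1-t)$ from \eqref{propR}, then \eqref{angle}, then the dimension count from \eqref{dimkerR}.

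The difference is in (a)$\Leftrightarrow$(b) and the global assertion. The paper does not prove these; it quotes both from \cite[Proposition~3.3]{MeT1}. You instead prove them directly.

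The structure equations you were reaching for are exactly \eqref{derR} and \eqref{derS2}. Every term of $(\nabla_X\cR)Y$ contains $\zeta$, so $\zeta\equiv0$ gives $\nabla\cR=0$. The second equation is $\nabla_X\zeta=(tI-\cR)SX$, so $t\equiv0$ gives $\cR S=0$. You should state this precisely: the form ``$(I-\cR)A$'' you guessed would literally give the opposite inclusion $S(TM)\subset\ker(I-\cR)$. This does no damage, because you never use $S(\ker(I-\cR))=0$ afterwards.

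The rest is sound: the factorisation of $f$ through $\pi_1\circ f$ and $\pi_2\circ f$ on a de Rham product neighbourhood, and the global step via global de Rham. In the global step, $\phi$ is a local isometry from a complete factor, hence a covering of the simply connected model, hence an isometry. This tacitly uses that the full factor has dimension $n-k+1\ge2$ (or $k\ge2$ when $t\equiv1$), which is exactly where $2\le k\le n-1$ enters.

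What your route buys is a self-contained argument. It shows that the local statement needs nothing beyond $\zeta\equiv0$. Locally you could even bypass de Rham: $\pi_1\circ f$ has constant rank $k-1$, so $f$ is a local isometry onto $\Sigma\times\tM^{n-k+1}(\kappa_2)$, with $\Sigma$ a local image of $\pi_1\circ f$. Your route also makes explicit where completeness and simple connectedness are used. The paper's citation simply buys brevity.
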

\begin{proof}
The equivalence between (a) and (b) is a particular case of \cite[Proposition~3.3]{MeT1}. By the third equation in \eqref{propR},  $\zeta$ vanishes everywhere if and only if $t=0$ or $t=1$ everywhere. By \eqref{angle}, this holds if and only if $C=1$ or $C=-1$ everywhere, respectively. Thus, (b) and (c) are equivalent. Finally, by the discussion after \eqref{propR}, $\zeta$ vanishes everywhere if and only if $\dim \ker \cR + \dim \ker (I-\cR)=n$ everywhere. The equivalence between (b) and (d) then follows from \eqref{dimkerR}.  The last assertion is also a consequence of \cite[Proposition~3.3]{MeT1}.
\end{proof}

Given a unit normal vector field $\xi$ along a hypersurface $f\colon M^{n}\to \tM^{k}(\kappa_1)\times \tM^{n-k+1}(\kappa_2)$, let $S$ denote the corresponding shape operator. Below we list some differential equations relating $S$, $\cR$, $\zeta$ and $t$ that follow by computing the tangent and normal components in $\on \pi_2^* = \pi_2^* \on$, applied to both tangent and normal vectors, and using Gauss and Weingarten formulae:
\begin{equation}\label{derR}
    (\nabla_X \cR)Y= \<Y, \zeta\>SX+\<SX,Y\>\zeta,
\end{equation}
\begin{equation}\label{derS2}
    \nabla_X\zeta=(tI-\cR)SX
\end{equation}
and 
\begin{equation}\label{dert}
    X(t)=-2\<SX, \zeta\>
\end{equation}
for all $X, Y\in\Ga(TM^n)$. The last equation can be written as 
\begin{equation}\label{dert2}
    \grad t=-2 S \zeta.
\end{equation}

For further use, we record the following facts.
\begin{lemma}\label{lem:grad}
    If a hypersurface $f\colon M^{n}\to \tM^{k}(\kappa_1)\times \tM^{n-k+1}(\kappa_2)$ satisfies $S\cR = \cR S$, then $\zeta$ is {\rm(}locally{\rm)} a gradient vector field.
\begin{proof}
    It suffices to show that the one-form $\zeta^\flat =\<\zeta,\cdot\>$ is closed. Since $S\cR = \cR S$, the endomorphism $A:=(tI-\cR)S$ is symmetric. Then, using~\eqref{derS2}, a direct computation yields
    $d\zeta^\flat (X,Y) = \<AX,Y\> - \<AY,X\> = 0$ for all $X,Y\in\Gamma(TM^n)$. Thus, $d\zeta^\flat = 0$.
\end{proof}
\end{lemma}

The Gauss and Codazzi equations are, respectively,
\begin{gather}
    R(X,Y) = \kappa_1(X\wedge Y - X\wedge\cR Y - \cR X \wedge Y) + (\kappa_1+\kappa_2)(\cR X \wedge \cR Y) + SX \wedge SY,\label{Gauss}\\
    (\nabla_X S)Y  - (\nabla_Y S)X = (\kappa_1I-(\kappa_1+\kappa_2)\cR)(X\wedge Y)\zeta,\label{codazzi}
\end{gather}
where $(X\wedge Y)Z = \<Y,Z\>X - \<X,Z\>Y$ for all tangent vector fields on $M^n$.\vspace{1ex}

In the next statement,  $\rk S$ denotes the rank of the shape operator $S$.

\begin{lemma} \label{lem:rks01}
    Let $f\colon M^{n}\to \tM^{k}(\kappa_1)\times \tM^{n-k+1}(\kappa_2)$, $2\leq k\leq n-1$, $\kappa_1\kappa_2\neq 0$, be a hypersurface. If $M^n$ has constant sectional curvature, then $\rk S\ge2$ everywhere on $M^n$.
    \end{lemma}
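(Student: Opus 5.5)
Suppose, for contradiction, that at some point $x\in M^n$ we have $\rk S\le 1$, so $SX\wedge SY=0$ for all $X,Y\in T_xM^n$. Then at $x$ the Gauss equation \eqref{Gauss} becomes, with $M^n=M^n(\kappa)$,
\[
\kappa\, X\wedge Y=\kappa_1(X\wedge Y - X\wedge\cR Y - \cR X \wedge Y) + (\kappa_1+\kappa_2)\,\cR X \wedge \cR Y .
\]
The plan is to test this purely algebraic identity on suitable eigenvectors of $\cR$ at $x$ and reach a contradiction with $\kappa_1\kappa_2\neq0$ and $2\le k\le n-1$. By \eqref{dimkerR}, $\dim\ker\cR\ge k-1\ge1$ and $\dim\ker(I-\cR)\ge n-k\ge1$. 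Also $\dim\ker\cR+\dim\ker(I-\cR)\ge n-1\ge2$, so at least one of these eigenspaces has dimension $\ge2$ unless $n=3$; the case $n=3$, $k=2$ needs separate care.

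First I take $X\in\ker\cR$ and $Y\in\ker(I-\cR)$, both unit, and apply both sides to $Y$. The left side gives $\kappa X$. The right side gives $\kappa_1(X - X) + 0=0$, since $X\wedge\cR Y=X\wedge Y$, $\cR X=0$, and $\cR X\wedge\cR Y=0$. Hence $\kappa=0$.

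Next I need a pair of orthonormal vectors inside a single eigenspace. If $X,Y\in\ker\cR$, the right side equals $\kappa_1 X\wedge Y$, so $\kappa=\kappa_1$. If $X,Y\in\ker(I-\cR)$, the right side equals $(\kappa_1-2\kappa_1+\kappa_1+\kappa_2)X\wedge Y=\kappa_2X\wedge Y$, so $\kappa=\kappa_2$. Combined with $\kappa=0$, either case contradicts $\kappa_1\kappa_2\ne0$.

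The remaining case, expected to be the only obstacle, is when both kernels are one-dimensional. This forces $n=3$, $k=2$ and $\zeta(x)\neq0$, so $\cR$ has the additional eigenvalue $r=1-t\in(0,1)$ with eigenvector $\zeta$. Take $X\in\ker\cR$ and $Z=\zeta/\|\zeta\|$, and apply the identity to $Z$. The right side becomes $\kappa_1(X - rX)=\kappa_1(1-r)X$. Since $\kappa=0$ this forces $r=1$, a contradiction. So two orthonormal vectors in one eigenspace are not actually needed: the pair $(X,Z)$ already contradicts $\kappa=0$ whenever $\zeta(x)\neq0$. If $\zeta(x)=0$, then $\dim\ker\cR+\dim\ker(I-\cR)=n\ge3$, so one eigenspace has dimension $\ge2$ and the previous paragraph applies.

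In both cases we obtain a contradiction, so $\rk S\ge2$ at every point.
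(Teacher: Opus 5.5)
Your proposal is correct and follows essentially the same route as the paper. A mixed pair from $\ker\cR$ and $\ker(I-\cR)$ forces $\kappa=0$. Then either $\zeta(x)\neq0$, in which case the pair $X,\zeta$ gives $\kappa_1(1-r)=0$, or $\zeta(x)=0$, in which case some eigenspace has dimension at least two, forcing $\kappa=\kappa_1$ or $\kappa=\kappa_2$. The only cosmetic difference is in the case $\zeta(x)\neq0$: the paper uses both $\kappa_1(1-r)$ and $\kappa_2 r$, whereas you use one of them together with $t(1-t)=\|\zeta\|^2>0$, i.e. $r\in(0,1)$.
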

\begin{proof}
    Arguing by contradiction, suppose that $\rk S \le 1$ at some $x\in M^n$. Then the Gauss equation \eqref{Gauss} gives
    \begin{equation}\label{Gaussrankone}
    R(X,Y) = \kappa_1(X\wedge Y - X\wedge\cR Y - \cR X \wedge Y) + (\kappa_1+\kappa_2)(\cR X \wedge \cR Y)
    \end{equation}
    for all $X,Y\in T_xM^n$.  Since $2\leq k\leq n-1$, by \eqref{dimkerR} we can choose nonzero
$X_1\in \ker \cR$ and $X_2\in \ker(I-\cR)$, and \eqref{Gaussrankone} gives $R(X_1, X_2)=0$. 
If $\zeta\neq 0$ at $x$, applying \eqref{Gaussrankone}  to $X_i$ and $\zeta$,  $i=1,2$, yields
$0=R(X_1, \zeta)=\kappa_1(1-r)(X_1\wedge \zeta)$ and $0=R(X_2, \zeta)=\kappa_2r (X_2\wedge \zeta)$, a contradiction.
If $\zeta=0$ at $x$, then either $\ker \cR$ or   $\ker (I-\cR)$, say, the former, has dimension at least two. Applying \eqref{Gaussrankone} to linearly independent $X_1, Y_ 1\in \ker \cR$ yields $0=R(X_1, Y_1)=\kappa_1 (X_1\wedge Y_1)$, again a contradiction.  The statement follows.
\end{proof}

In the following, we denote by $\tM^{N,\delta}(\kappa)$ an $N$-dimensional pseudo-Riemannian space form with curvature $\kappa$ and index $\delta$. We write $\tM^{N,0}(\kappa)$ only when it is necessary to emphasise that the metric is Riemannian; otherwise, we use the simpler notation $\tM^{N}(\kappa)$.

We use the fact that $\tM^{N,\delta}(\kappa)$ admits an umbilical isometric immersion into $\tM^{N+1,\delta}(\tilde{\kappa})$ if $\kappa>\tilde{\kappa}$, and into $\tM^{N+1,\delta+1}(\tilde{\kappa})$ if $\kappa<\tilde{\kappa}$. In particular, $\tM^{N,\delta}(\kappa)$ with $\kappa\ne0$ admits an isometric embedding in $\br^{N+1,\delta+\sigma(\kappa)}$, which stands for a (pseudo-)Euclidean space endowed with a flat metric of index $\delta+\sigma(\kappa)$ (when $\delta + \sigma(\kappa) =1$, we take a connected component of the quadric in the following), as
\[
\tM^{N,\delta}(\kappa) = \left\{x\in \br^{N+1,\delta+\sigma(\kappa)}:\<x,x\> = \kappa^{-1}\right\}.
\]
Here, for $\kappa\in\br$, we set $\sigma(\kappa)=1$ if $\kappa<0$ and $\sigma(\kappa)=0$ otherwise.
Therefore, $\tM^{n_1}(\kappa_1)\times \tM^{n_2}(\kappa_2)$ admits a canonical isometric embedding
\[
h\colon \tM^{n_1}(\kappa_1)\times \tM^{n_2}(\kappa_2) \to \br^{N_1,\sigma(\kappa_1)} \times \br^{N_2,\sigma(\kappa_2)} = \br^{N_1+N_2,\eta},
\]
with $\eta= \sigma(\kappa_1)+\sigma(\kappa_2)$, $N_i=n_i+1$ if $\kappa_i\ne0$ and $N_i=n_i$ otherwise, in which case $\tM^{n_i}(\kappa_i)$ stands for $\br^{n_i}$.
Denote by $\tilde{\pi}_i\colon\br^{N_1+N_2,\eta} \to \br^{N_i,\sigma(\kappa_i)}$ the canonical projection for $i=1,2$ (note that $\tilde{\pi}_i^* = \tilde{\pi}_i$). 

From now on, we assume that $\kappa_1\kappa_2\ne0$ and set $n_1=k$ and $n_2=n-k+1$, so that $N_1 = k+1$ and $N_2 = n-k+2$. Then, the normal space of $h$ at each point $z\in \tM^{k}(\kappa_1) \times \tM^{n-k+1}(\kappa_2)$ is spanned by $\tilde{\pi}_1(h(z))$ and $\tilde{\pi}_2(h(z))$, and the second fundamental form of $h$, denoted by $\secf_{h}$, is given as
\[
\secf_h(X,Y)=-\kappa_1\<\tilde\pi_1 X, Y\>\tilde\pi_1\circ h - \kappa_2\<\tilde\pi_2X, Y\>\tilde\pi_2\circ h
\]
for all vector fields $X,Y$ tangent to $\tM^{k}(\kappa_1)\times\tM^{n-k+1}(\kappa_2)$.

Given a hypersurface $f\colon M^n \to \tM^{k}(\kappa_1) \times \tM^{n-k+1}(\kappa_2)$, 
denote $\tilde{F}:=h\circ f$ and write $\kappa_i = \varepsilon_i/R_i^2$, where $\varepsilon_i = 1$ if $\kappa_i>0$ and $\varepsilon_i=-1$ if $\kappa_i<0$. Then, the vector field $\nu_1 = \frac{1}{R_1}\tilde{\pi}_1 \circ \tilde{F}$ is normal to $\tilde{F}$, $\<\nu_1,\nu_1\> = \varepsilon_1$, and 
\(
\tilde{\nabla}_X\nu_1=\frac{1}{R_1}\tilde\pi_1 \tilde{F}_*X
\)
for $X\in\Gamma(TM^n)$,
where $\tilde{\nabla}$ stands for the 
(flat) 
Levi-Civita connection in $\br^{n+3,\eta}$.
Taking the tangent and normal components of $\tilde{\nabla}_X \nu_1$ gives, respectively,
\begin{equation} \label{eq:nu}
S_{\nu_1} = -\frac{1}{R_1}(I-\cR)\,\,\,\,\,\mbox{and}\,\,\,\,\,\,\nabla_X^\perp \nu_1=-\frac{1}{R_1}\<X, \zeta\>h_*\xi.
\end{equation}
Similarly, the vector field $\nu_2 = \frac{1}{R_2}\tilde{\pi}_2 \circ \tilde{F}$ is also normal to $\tilde{F}$, $\<\nu_2,\nu_2\>=\varepsilon_2$, and $\tilde{\nabla}_X \nu_2 = \frac{1}{R_2}\tilde\pi_2 \tilde{F}_*X$. Thus
\begin{equation} \label{eq:nu'}
S_{\nu_2} = -\frac{1}{R_2}\cR\,\,\,\,\,\mbox{and}\,\,\,\,\,\,\nabla_X^\perp \nu_2=\frac{1}{R_2}\<X, \zeta\>h_*\xi.
\end{equation}

\vspace{1ex}
We conclude this section by recording an algebraic result that will be
used later.

Let $V^n$ and $U^{p,q}$ be real vector spaces of dimensions $n$ and  $p+q$, respectively, where
$U^{p,q}$ is endowed with an inner product $\ip$ of signature $(p,q) $. Thus, $\ip$ is positive definite if $q=0$ and Lorentzian if $q=1$. A bilinear form $\omega:V^n \times V^n \to U^{p,q}$  is said to be \emph{flat} with respect to $\ip$ if $\<\omega(X,Y),\omega(Z,W)\> = \<\omega(X,W),\omega(Z,Y)\>$ for every $X,Y,Z,W \in V^n$. 
We denote the nullity subspace of $\omega$ by $\mathcal{N}(\omega)=\{Y\in V^n : \omega(X,Y)=0\;\text{for all}\;X\in V^n\}$, while its image subspace by $\mathcal{I}(\omega)=\Span \{\omega(X,Y) : X,Y\in V^n\}$. 
In particular, when $\omega$ is the second fundamental form of an isometric immersion, the subspace $\mathcal{N}(\omega)$ is called the \emph{relative nullity subspace}, and its dimension $\dim\mathcal{N}(\omega)$ the \emph{index of relative nullity}.
For further details, we refer the reader to \cite[Chapter~4]{DT}.

The lemma below was proved by E.\,Cartan in Chapter~II of \cites{Car19,Car20}, under the assumption in~\eqref{it:pode}, and by J.\,D.\,Moore in \cite[Corollary~$1$]{Moo}, under the assumption in~\eqref{it:symim} (see also Lemmas 4.10 and 4.14  and the more general Lemma~4.20 in \cite{DT}).

\begin{lemma}\label{lem:flatbi}
    Let $\omega: V^n \times V^n \to U^{p,q}$, $\dim U<\dim V$, be a flat symmetric bilinear form. Suppose that one of the following holds{\rm:}
    \begin{enumerate}[label={\rm{(\alph*)}}, ref=\alph*]
        \item\label{it:pode} $q=0${\rm;} 
        \item\label{it:symim} $q=1$ and $\mathcal{I}(\omega) = U^{p,1}$.
    \end{enumerate}
Then $\dim \mathcal{N}(\omega) \ge \dim V - \dim U$.
\end{lemma}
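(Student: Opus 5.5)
The plan is to use the standard regular-element argument. For $X\in V^n$ let $\omega_X:=\omega(X,\cdot)\colon V^n\to U$, and call $X$ \emph{regular} if $\rk\omega_X$ is maximal. Fix a regular $X$ and set $K:=\ker\omega_X$, $W:=\omega_X(V^n)$.

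\textbf{Step 1: $\omega(K,V^n)\subset W$.} For $Y\in V^n$, the maps $\omega_{X+sY}$ have rank at most $\rk\omega_X$ for all $s$. Since $X$ is regular, $\omega_{X+sY}$ has constant rank for small $|s|$. Differentiating at $s=0$ and restricting to $K$ gives $\omega_Y(K)\subset W$. This is the usual first-order perturbation argument on the determinants of minors.

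\textbf{Step 2: $\omega(K,V^n)\subset W\cap W^\perp$.} Take $Z\in K$ and $Y,B\in V^n$. Flatness gives
\[
\<\omega(Z,Y),\omega(X,B)\>=\<\omega(Z,B),\omega(X,Y)\>.
\]
The right-hand side vanishes because $\omega(Z,X)=0$ by symmetry, so it should be read with the roles arranged so that one factor is $\omega_X(Z)=0$. Hence $\omega(K,V^n)\perp W$. Combined with Step 1, this gives the claim.

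\textbf{Case \eqref{it:pode}.} If $q=0$, then $W\cap W^\perp=0$, so $K\subset\mathcal N(\omega)$. Therefore
\[
\dim\mathcal N(\omega)\ge\dim K\ge\dim V-\dim U.
\]

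\textbf{Case \eqref{it:symim}.} If $q=1$ and $W\cap W^\perp=0$, we conclude as in the previous case. Otherwise $W$ is degenerate, so $W\cap W^\perp=\Span\{e\}$ with $e$ light-like. Since $W\subset e^\perp$ and $e^\perp$ is a degenerate hyperplane, $\dim W\le\dim U-1$, and hence $\dim K\ge\dim V-\dim U+1$. By Step 2 we may write $\omega(Z,Y)=\varphi_Z(Y)\,e$ for $Z\in K$. It then suffices to show that $\{Z\in K:\varphi_Z=0\}$ has codimension at most one in $K$, since that set lies in $\mathcal N(\omega)$.

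This is the step where the hypothesis $\mathcal I(\omega)=U^{p,1}$ enters, and I expect it to be the main obstacle. Because $e\neq0$ and the image is all of $U$, the linear functional $\<e,\cdot\>$ does not vanish on $\mathcal I(\omega)$. So there exist $A,B$ with $\<e,\omega(A,B)\>=1$. Flatness applied to $Z\in K$ gives
\[
\varphi_Z(Y)\<e,\omega(A,B)\>=\varphi_Z(B)\<e,\omega(A,Y)\>.
\]
Therefore $\varphi_Z=\varphi_Z(B)\,\lambda$, where $\lambda(Y)=\<e,\omega(A,Y)\>$. Consequently $\varphi_Z=0$ whenever $Z$ lies in the kernel of the linear functional $Z\mapsto\varphi_Z(B)$ on $K$. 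This kernel has codimension at most one in $K$ and lies in $\mathcal N(\omega)$, so
\[
\dim\mathcal N(\omega)\ge\dim K-1\ge\dim V-\dim U.
\]

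The delicate points are justifying the perturbation argument in Step 1 carefully, and keeping track of the signature in the degenerate case.
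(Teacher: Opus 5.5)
Your proof is correct. The paper does not prove this lemma itself. It attributes (a) to Cartan and (b) to Moore, pointing to Lemmas 4.10, 4.14 and 4.20 of Dajczer--Tojeiro, and your argument is the standard regular-element proof behind those results. For regular $X$ one has $\omega(\ker\omega_X,V)\subset\omega_X(V)\cap\omega_X(V)^\perp$, which settles the definite case at once. In the degenerate Lorentzian case, the extra dimension of $\ker\omega_X$ coming from $\omega_X(V)\subset e^\perp$ exactly pays for the rank-one map $Z\mapsto\varphi_Z\in\Span\{\lambda\}$.

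Two small repairs are needed.
\begin{itemize}
\item In Step 2 the displayed identity is the wrong instance of flatness. The one you need is $\langle\omega(Z,Y),\omega(B,X)\rangle=\langle\omega(Z,X),\omega(B,Y)\rangle=0$.
\item Step 1 is cleanest by contradiction. Let $m=\rk\omega_X$, and suppose $\omega_Y(Z)\notin W$ for some $Z\in K$. Pick $X_1,\dots,X_m$ with $\omega_X(X_1),\dots,\omega_X(X_m)$ a basis of $W$. Then $\omega_{X+sY}(X_1),\dots,\omega_{X+sY}(X_m)$ and $s^{-1}\omega_{X+sY}(Z)=\omega_Y(Z)$ remain linearly independent for small $s\neq0$. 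Hence $\rk\omega_{X+sY}>m$, contradicting regularity.
\end{itemize}

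Finally, you use $\mathcal{I}(\omega)=U$ only to find $A,B$ with $\langle e,\omega(A,B)\rangle\neq0$. Since $e\in W\subset\mathcal{I}(\omega)$, this already holds when $\mathcal{I}(\omega)$ is merely nondegenerate. That weaker form is how the paper actually invokes part (b), in the proofs of Lemma~\ref{lem:setcons} and Theorem~\ref{thm: flatH}.
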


\section{The examples}\label{examples}
In this section, we produce examples of hypersurfaces of dimension $n\ge3$ and constant sectional curvature $\kappa = \frac{\kappa_1\kappa_2}{\kappa_1+\kappa_2}$ of $\tM^k(\kappa_1)\times\tM^{n-k+1}(\kappa_2) \subset \br^{k+1,\sigma(\kappa_1)}\times \br^{n-k+2,\sigma(\kappa_2)}=\br^{n+3,\eta}$, $\kappa_1+\kappa_2<0$, $\eta = \sigma(\kappa_1)+\sigma(\kappa_2)$, that arise as intersections of $\tM^k(\kappa_1)\times\tM^{n-k+1}(\kappa_2)$ with a degenerate hyperplane through the origin in $\br^{n+3,\eta}$.

Throughout, for the image of the standard embedding of the hyperbolic space in the Lorentzian space with codimension $1$, we always take one of the two connected components of the corresponding quadric, without saying this explicitly.

\begin{example}\label{ex:int1}
    Let $2\le k \le n-1$ and let $\tilde{\pi}_1\colon\br^{n+3,1}\to \br^{k+1}$ and $\tilde{\pi}_2\colon \br^{n+3,1}\to\br^{n-k+2,1}$ be orthogonal projections. Define
    \[
    \tM^{k,0}(\kappa_1) = \{x \in \br^{k+1} \, | \, \<x,x\> = \kappa_1^{-1}\},\,\,\, \tM^{n-k+1,0}(\kappa_2) = \{y \in \br^{n-k+2,1} \, | \, \<y,y\> = \kappa_2^{-1}\}
    \]
 and 
 $$\tM^{n+2,1}(\kappa) = \{z \in \br^{n+3,1} \, | \, \<z,z\> = \kappa^{-1}\},$$
  where $\kappa^{-1}=\kappa_1^{-1}+\kappa_2^{-1}$, and where we have $\kappa_2 < 0 < \kappa_1$ and $\kappa_1+\kappa_2 <0$ (and so $\kappa > 0$). 
  Clearly $\tM^{k,0}(\kappa_1) \times \tM^{n-k+1,0}(\kappa_2) \subset \tM^{n+2,1}(\kappa) $.

  Let $w \in \br^{n+3,1}$ be a light-like vector. Denote $w_i = \tilde{\pi}_i(w)$,  for $i=1,2$. We can assume that $\<w_1 ,w_1\> = 1$ and $\<w_2 ,w_2\> = -1$. Let $\br^{n+1} = \Span(w_1 , w_2)^\perp$ and let $\iota\colon \tM^{n,0}(\kappa) \to \br^{n+1}$ be the isometric embedding of the sphere $\tM^{n,0}(\kappa)$ such that $\<\iota(p), \iota(p)\>=\kappa^{-1}$. Define the embedding $\phi\colon\tM^{n,0}(\kappa) \to \br^{n+3,1}$ by $\phi(p) = \iota(p) + \upsilon(p) w$ for $p \in \tM^{n,0}(\kappa)$, where we specify the function $\upsilon\colon \tM^{n,0}(\kappa) \to \br$ a little later. Note that $w\in \Span(w,\iota(p))^\perp$, for all $p \in \tM^{n,0}(\kappa)$, by construction.

  We have $\<\phi(p),\phi(p)\> = \<\iota(p),\iota(p)\> = \kappa^{-1}$, and so $\phi(\tM^{n,0}(\kappa)) \subset \tM^{n+2,1}(\kappa)$. Moreover, since $w$ is light-like and is orthogonal to $\iota(p)$, the embedding $\phi$ is an isometry. To ensure that $\phi(\tM^{n,0}(\kappa)) \subset \tM^{k,0}(\kappa_1)\times\tM^{n-k+1,0}(\kappa_2)$, we choose the function $\upsilon\colon\tM^{n,0}(\kappa) \to \br$ in the definition of $\phi$ in such a way that $\<\tilde{\pi}_1 (\phi(p)),\tilde{\pi}_1 (\phi(p))\>= \kappa_1^{-1}$. 
  This gives $\upsilon(p)= (\kappa_1^{-1}-\<\tilde{\pi}_1 (\iota(p)),\tilde{\pi}_1 (\iota(p))\>)^{1/2}$. Hence
  $$\<\tilde{\pi}_1 (\iota(p)),\tilde{\pi}_1 (\iota(p))\><\<\iota(p),\iota(p)\> = \kappa^{-1}= \kappa_1^{-1}+\kappa_2^{-1} < \kappa_1^{-1},$$ so the right-hand side is well defined. Note that $$\phi(\tM^{n,0}(\kappa)) =(\tM^{k,0}(\kappa_1) \times \tM^{n-k+1,0}(\kappa_2))\cap w^\perp \subset \br^{n+3,1}.$$
\end{example}

\begin{example}\label{ex:int2}
     Let $2\le k \le n-1$ and let $\tilde{\pi}_1\colon\br^{n+3,2}\to \br^{k+1,1}$ and $\tilde{\pi}_2\colon \br^{n+3,2}\to\br^{n-k+2,1}$ be orthogonal projections. For $\kappa_1,\kappa_2<0$, define
    \[
    \tM^{k,0}(\kappa_1) = \{x \in \br^{k+1,1} \, | \, \<x,x\> = \kappa_1^{-1}\},\,\,\, \tM^{n-k+1,0}(\kappa_2) = \{y \in \br^{n-k+2,1} \, | \, \<y,y\> = \kappa_2^{-1}\}
    \]
 and 
 $$\tM^{n+2,1}(\kappa) = \{z \in \br^{n+3,2} \, | \, \<z,z\> = \kappa^{-1}\},$$
  where $\kappa^{-1}=\kappa_1^{-1}+\kappa_2^{-1}$. 
  Clearly $\tM^{k,0}(\kappa_1) \times \tM^{n-k+1,0}(\kappa_2) \subset \tM^{n+2,1}(\kappa) $.

  Let $w\in\br^{n+3,2}$ be such that $\<w,w\>=0$, and denote $w_i = \tilde{\pi}_i(w)$, for $i=1,2$. We can assume that one of the following three possibilities holds:

\begin{enumerate}[label={\rm{(\alph*)}}]
    \item \label{it:ex41} $\<w_1 ,w_1\> = -1$ and $\<w_2 ,w_2\> = 1$;
    \item \label{it:ex42} $\<w_1 ,w_1\> = 1$ and $\<w_2 ,w_2\> = -1$;
    \item \label{it:ex43} $\<w_1 ,w_1\> = 0=\<w_2 ,w_2\>$,
\end{enumerate}
In cases \ref{it:ex41} and \ref{it:ex42}, let $\br^{n+1,1} = \Span(w_1,w_2)^\perp$, and let $\iota\colon \tM^{n,0}(\kappa) \to \br^{n+1,1}$ be an isometric embedding of the hyperbolic space $\tM^{n,0}(\kappa)$ into $\br^{n+1,1}$ such that $ \<\iota(p),\iota(p)\>~=~\kappa^{-1}$. Define the embedding $\phi\colon\tM^{n,0}(\kappa) \to \br^{n+3,2}$ by $\phi(p) = \iota(p) + \upsilon(p) w$ for $p \in \tM^{n,0}(\kappa)$, where the function $\upsilon\colon \tM^{n,0}(\kappa)\to \br$ will be specified later. Note that $w \in \Span(w, \iota(p))^\perp$, for all $p \in \tM^{n,0}(\kappa)$, by construction. 

  We have $\<\phi(p),\phi(p)\> = \<\iota(p),\iota(p)\> = \kappa^{-1}$, and so $\phi(\tM^{n,0}(\kappa)) \subset \tM^{n+2,1}(\kappa)$. Moreover, since $\<w,w\>=0$ and $w$ is orthogonal to $\iota(p)$, the embedding $\phi$ is an isometry. To ensure that $\phi(\tM^{n,0}(\kappa)) \subset \tM^{k,0}(\kappa_1)\times\tM^{n-k+1,0}(\kappa_2)$, we choose the function $\upsilon\colon\tM^{n,0}(\kappa) \to \br$ in the definition of $\phi$ as follows. In case \ref{it:ex41}, define
  \[
  \upsilon(p)= (\kappa_2^{-1}-\<\tilde{\pi}_2 (\iota(p)),\tilde{\pi}_2 (\iota(p))\>)^{1/2},
  \]
  so that  $\<\tilde{\pi}_2 (\phi(p)),\tilde{\pi}_2 (\phi(p))\> = \kappa_2^{-1}$. 
  Note that $\<\tilde{\pi}_1 (\iota(p)),\tilde{\pi}_1 (\iota(p))\>\ge0$, hence
  $$\<\tilde{\pi}_2 (\iota(p)),\tilde{\pi}_2 (\iota(p))\> \le \<\iota(p),\iota(p)\> = \kappa^{-1}= \kappa_1^{-1}+\kappa_2^{-1} < \kappa_2^{-1},$$ so that $\phi$ is well defined. 
  In case \ref{it:ex42}, define
   \[
  \upsilon(p)= (\<\tilde{\pi}_2 (\iota(p)),\tilde{\pi}_2 (\iota(p))\>-\kappa_2^{-1})^{1/2},
  \]
  so that 
  $\<\tilde{\pi}_2 (\phi(p)),\tilde{\pi}_2 (\phi(p))\>= \kappa_2^{-1}$. As
 $\<\tilde{\pi}_2(\iota(p)),\tilde{\pi}_2(\iota(p))\>\geq 0$, it follows that
 $$\<\tilde{\pi}_2 (\iota(p)),\tilde{\pi}_2 (\iota(p))\>-\kappa_2^{-1}>0,$$ which shows that the map $\phi$ is well defined.

Now we consider case \ref{it:ex43}. Since $\<w_1 ,w_1\> = \<w_2 ,w_2\> = 0$,  the subspace $w^\perp \subset \br^{n+3,2}$ has degeneracy $1$ and index $1$.
Take any nondegenerate hyperplane $\br^{n+1,1} \subset w^\perp$, and define an isometric embedding $\iota\colon\tM^{n,0}(\kappa) \to \br^{n+1,1}$ in such a way that $\<\iota(p),\iota(p)\> = \kappa^{-1}$. Now define the embedding $\phi\colon \tM^{n,0}(\kappa) \to \br^{n+3,2}$ by $\phi(p) = \iota(p) + \upsilon(p) w$ for $p \in \tM^{n,0}(\kappa)$, where we specify the function $\upsilon\colon \tM^{n,0}(\kappa) \to \br$ a little later. Again,  $w\in \Span( w, \iota(p))^\perp$, for all $p \in \tM^{n,0}(\kappa)$, by construction, and so $\phi$ is an isometric embedding. Moreover, $\<\phi(p),\phi(p)\>=\<\iota(p),\iota(p)\> =\kappa^{-1}$, hence $\phi(\tM^{n,0}(\kappa)) \subset \tM^{n+2,1}(\kappa)$. Therefore, to have $\phi(\tM^{n,0}(\kappa)) \subset \tM^{k,0}(\kappa_1) \times \tM^{n-k+1,0}(\kappa_2)$, we need to choose the function $\upsilon\colon \tM^{n,0}(\kappa) \to \br$ in the definition of $\phi$ in such a way that $\<\tilde{\pi}_2(\phi(p)), \tilde{\pi}_2(\phi(p))\>= \kappa_2^{-1}$, which gives 
\begin{equation*}
  \upsilon(p) = \tfrac12 \<\tilde{\pi}_2(\iota(p)), w_2\>^{-1} (\kappa_2^{-1} - \<\tilde{\pi}_2(\iota(p)), \tilde{\pi}_2(\iota(p))\>).
\end{equation*}
Note that the right-hand side is well defined. Indeed, suppose  $\<\tilde{\pi}_2(\iota(p)), w_2\>=0$ for some $p \in \tM^{n,0}(\kappa)$. Since $w_2 \in \br^{n-k+2,1}$ is light-like, this implies that $\<\tilde{\pi}_2(\iota(p)), \tilde{\pi}_2(\iota(p))\> \ge 0$. But as $\<\iota(p), w\>=0$, we also have $\<\tilde{\pi}_1(\iota(p)), w_1\>=0$, hence $\<\tilde{\pi}_1(\iota(p)), \tilde{\pi}_1(\iota(p))\> \ge 0$, which contradicts the fact that $\<\iota(p),\iota(p)\> = \kappa^{-1} < 0$. Note that, also in this case, 
 $$\phi(\tM^{n,0}(\kappa)) =(\tM^{k,0}(\kappa_1) \times \tM^{n-k+1,0}(\kappa_2))\cap w^\perp \subset \br^{n+3,2}.$$ 
\end{example}

\vspace{1ex}
\subsection{Examples \ref{ex:int1} and \ref{ex:int2} as doubly rotational hypersurfaces}\label{ss:alternative}

  In this subsection, we show that the hypersurfaces constructed in Examples \ref{ex:int1} and \ref{ex:int2} have the structure of doubly rotational hypersurfaces in the sense of \cite[Example~1]{CT2}. We provide explicit parametrisations of them, based on the local parametrisation obtained in \cite[Theorem~3.1]{CT2}  of all  hypersurfaces $
f\colon M^{n}\rightarrow \tM^{k}(\kappa_1)\times \tM^{n-k+1}(\kappa_2)
$ in \emph{class $\mathcal{A}$}, that is, hypersurfaces satisfying $S\cR =\cR S$.  We briefly recall this parametrisation below when $\kappa_1\kappa_2\neq 0$, which is the case of interest here.

Let $h\colon M_1^{k-1} \to \tM^{k}(\kappa_1)$ and $g \colon  M_2^{n-k} \to  \tM^{n-k+1}(\kappa_2)$, $2 \leq  k \leq n -1$, be hypersurfaces with (local) unit normal vector fields $N^h$ and $N^g$, respectively, and corresponding shape operators $S^h$ and $S^g$.
For $s$ in a (sufficiently small) open interval $\mathcal{O}= (-\delta,\delta)$ with $\delta>0$, consider the families of parallel hypersurfaces 
$
h_s\colon M_1^{k-1}\to \tM^{k}(\kappa_1)
$ 
and 
$
g_s \colon  M_2^{n-k} \to \tM^{n-k+1}(\kappa_2)
$ 
of $h$ and $g$, which are given, as maps into $\mathbb{R}^{k+1, \sigma(\kappa_1)}$ and 
$ \mathbb{R}^{n-k+2,\sigma(\kappa_2)}$, respectively, by
\begin{equation*}\label{h_s}
h_s (x_1) = C_{\varepsilon_1}( s / R_1) h(x_1) + R_1 S_{\varepsilon_1}(s/R_1) N^h(x_1), \,\, \varepsilon_1 = \operatorname{sgn}(\kappa_1), \,\, \kappa_1=\varepsilon_1/R_1^2,
\end{equation*}
and 
\begin{equation*}\label{g_s}
g_s (x_2) = C_{\varepsilon_2}( s / R_2) g(x_2) + R_2 S_{\varepsilon_2}(s/R_2) N^g(x_2), \,\, \varepsilon_2 = \operatorname{sgn}(\kappa_2), \,\, \kappa_2=\varepsilon_2/R_2^2,
\end{equation*}
for all  $x_1 \in M_1^{k-1}$ and $x_2 \in M_2^{n-k}$. Here, for $\varepsilon\in \{-1,1\}$, $C_\varepsilon$ and $S_\varepsilon$ denote the real functions given by 
\begin{equation}\label{cs}
C_\varepsilon(s)=
\begin{cases}
\cos  s, & \text{if } \varepsilon =1,\\
\cosh  s, & \text{if } \varepsilon = -1,
\end{cases}
\qquad\text{and}\qquad
S_\varepsilon(s)=
\begin{cases}
\sin  s, & \text{if } \varepsilon=1,\\
\sinh s, & \text{if } \varepsilon=-1.
\end{cases}
\end{equation}  Now, let $\alpha,\beta \colon \mathcal{O} \to \mathbb{R}$ be smooth functions such that 
\begin{equation}\label{smab}
\alpha(0)=\beta(0)=0,\,\,\,  \alpha'(s), \beta'(s) >0 \,\text{ and } \, (\alpha'(s))^2 + (\beta'(s))^2 =1
\end{equation} 
for all $s \in \mathcal{O}.$
It was shown in  \cite{CT2} that the restriction of the map 
$$
f \colon M^n=\mathcal{O} \times M_1^{k-1} \times M_2^{n-k} \to \tM^{k}(\kappa_1) \times \tM^{n-k+1}(\kappa_2) \subset \mathbb{R}^{k+1, \sigma(\kappa_1)}\times \mathbb{R}^{n-k+2,\sigma(\kappa_2)}=\mathbb{R}^{n+3,\eta},$$
$\eta=\sigma(\kappa_1)+\sigma(\kappa_2)$ (here, $M^n=\mathcal{O} \times M_1^{k-1} \times M_2^{n-k}$ is understood as a differentiable manifold, not as a Riemannian product), given by 
\begin{equation}\label{param2}
f(s,x_1,x_2) = h_{\alpha(s)}(x_1) +  g_{\beta(s)}(x_2),
\end{equation}
to the subset of its regular points, defines a hypersurface in class $\mathcal{A}$, and, conversely, any locally indecomposable hypersurface  $
f\colon M^{n}\rightarrow \tM^{k}(\kappa_1)\times \tM^{n-k+1}(\kappa_2)
$ 
in class $\mathcal{A}$ is given locally in this way. 
It was also shown in  \cite{CT2} that, for $f$ defined by \eqref{param2}, 
$$\ker \cR=TM_1^{k-1}, \quad \ker (I-\cR)=TM_2^{n-k} \quad \mbox{and}\quad \zeta=\alpha'(s)\beta'(s)\frac{\partial}{\partial s},$$
 with $\cR\dfrac{\partial}{\partial s}= (\beta'(s))^2\dfrac{\partial}{\partial s}$,
 so $r=r(s)=(\beta'(s))^2$. Taking into account \eqref{smab}, 
 \begin{equation} \label{abder}
 \beta'(s)=\sqrt{r(s)}\quad \mbox{and}\quad \alpha'(s)=\sqrt{1-r(s)}.
 \end{equation} 
 Moreover, let
 \[N_\alpha^h(s,x_1)=-\frac{\varepsilon_1}{R_1}S_{\varepsilon_1}(\alpha(s)/R_1)h(x_1)+C_{\varepsilon_1}(\alpha(s)/R_1)N^h(x_1)\] and 
 \[ N^g_\beta(s,x_2)=-\frac{\varepsilon_2}{R_2}S_{\varepsilon_2}(\beta(s)/R_2)g(x_2)+C_{\varepsilon_2}(\beta(s)/R_2)N^g(x_2) \]
 be unit normal vectors to $h_{\alpha(s)}$ and $g_{\beta(s)}$ at $x_1\in M_1^{k-1}$ and $x_2\in M_2^{n-k}$, respectively, with corresponding shape operators
 \[ S_\alpha^h(s,x_1)=\frac{1}{R_1}\frac{\varepsilon_1S_{\varepsilon_1}(\alpha(s)/R_1)I+R_1C_{\varepsilon_1}(\alpha(s)/R_1)S^h}{C_{\varepsilon_1}(\alpha(s)/R_1)I-R_1S_{\varepsilon_1}(\alpha(s)/R_1)S^h}\] and 
 \[S_\beta^g(s,x_2)=\frac{1}{R_2}\frac{\varepsilon_2S_{\varepsilon_2}(\beta(s)/R_2)I+R_2C_{\varepsilon_2}(\beta(s)/R_2)S^g}{C_{\varepsilon_2}(\beta(s)/R_2)I-R_2S_{\varepsilon_2}(\beta(s)/R_2)S^g}.\]
 Then 
\begin{equation*}
\xi(x) = -\beta'(s)N_\alpha^h(x_1) + \alpha'(s)N^g_\beta (x_2)
\end{equation*}
is a unit normal vector to $f$ at $x=(s,x_1,x_2)$, and the corresponding shape operator is given by
$$
S|_{T_{x_1}M_1}(x)=-\beta'(s)\, S_\alpha^h(s,x_1),\quad \quad
S|_{T_{x_2}M_2} (x)=  \alpha'(s)  S_\beta^g(s,x_2),
$$
and
$$
 S\dfrac{\partial}{\partial s}(x)=(\beta''(s)\alpha'(s)-\alpha''(s)\beta'(s))\frac{\partial}{\partial s}(x).
 $$
 Therefore, the principal curvatures of $f$ at $x=(s, x_1, x_2)$ are
 \begin{equation}\label{p1}
 \lambda_1(x)=\beta''(s)\alpha'(s)-\alpha''(s)\beta'(s),
 \end{equation}
 \begin{equation}\label{p2}
 \lambda_i(x)=-\frac{\beta'(s)}{R_1}\frac{ \varepsilon_1 S_{\varepsilon_1}(\alpha(s)/R_1) + R_1\lambda^h_i(x_1)C_{\varepsilon_1}(\alpha(s)/R_1)}{C_{\varepsilon_1}(\alpha(s)/R_1)-R_1\lambda^h_i(x_1)S_{\varepsilon_1}(\alpha(s)/R_1)}, \quad 2\leq i\leq k,
  \end{equation}
 and
 \begin{equation}\label{p3}
 \lambda_j(x)=\frac{\alpha'(s)}{R_2}\frac{ \varepsilon_2 S_{\varepsilon_2}(\beta(s)/R_2) + R_2\lambda^g_j(x_2)C_{\varepsilon_2}(\beta(s)/R_2)}{C_{\varepsilon_2}(\beta(s)/R_2)-R_2\lambda^g_j(x_2)S_{\varepsilon_2}(\beta(s)/R_2)},\quad k+ 1\leq j\leq n.
  \end{equation}

 Now let $f\colon M^n(\kappa)\to \tM^{n+1}=\tM^{k}(\kappa_1)\times \tM^{n-k+1}(\kappa_2)$, $n\geq 3$, be an isometric immersion of a space of constant sectional curvature $\kappa$ into a product of two space forms with curvatures $\kappa_1$ and $\kappa_2$, 
 given by either of Examples \ref{ex:int1} or \ref{ex:int2}.  Let $\xi$ be a unit normal vector field along $f$, and let
    $
    \ell\colon \tM^{k}(\kappa_1)\times\tM^{n-k+1}(\kappa_2) \to \br^{k+1,\sigma(\kappa_1)}\times \br^{n-k+2,\sigma(\kappa_2)} = \br^{n+3,\eta}
    $
    be the inclusion, where $\eta = \sigma(\kappa_1)+\sigma(\kappa_2)$. Then
    \[
    \ell(\tM^{k}(\kappa_1)\times\tM^{n-k+1}(\kappa_2)) \subset \tM^{n+2,1}(\tilde{\kappa}),\quad 
    {\tilde\kappa}^{-1} = \kappa_1^{-1}+\kappa_2^{-1}.
    \]
Denote $\bar{f} = \ell\circ f$. Recall that, in both  Examples \ref{ex:int1} and \ref{ex:int2},  $\bar{f}(M^n)$ is contained in the intersection of $\ell(\tM^{k}(\kappa_1)\times\tM^{n-k+1}(\kappa_2))\subset \br^{n+3,\eta}$ with a degenerate hyperplane through the origin orthogonal to a light-like vector $w$. We may assume that $w=\ell_*\xi+c_1 \nu_1+c_2 \nu_2$ for some $c_1, c_2\in \mathbb{R}$. 
Since the unit vector field $U =-|\tilde \kappa|^{1/2}(R_1\nu_1+R_2\nu_2)$  lies in the direction of the position vector field of $\bar{f}$, then $\<w, U\>=0=\<w,w\>$, hence
\[ 1+\varepsilon_1 c_1^2+\varepsilon_2 c_2^2=0\quad \mbox{and}\quad  \varepsilon_1 c_1 R_1+\varepsilon_2c_2 R_2=0.\]
After changing $\xi$ by $-\xi$, if necessary, the preceding equations yield
\[\frac{c_1}{R_1}=\frac{\kappa_1}{|\kappa_1+\kappa_2|^{1/2}}\quad \mbox{and}\quad\frac{c_2}{R_2}=-\frac{\kappa_2}{|\kappa_1+\kappa_2|^{1/2}}.\]
Using \eqref{eq:nu} and \eqref{eq:nu'}, we obtain
\[0=S_w=S+c_1\left(-\frac{1}{R_1}(I-\cR)\right)+c_2\left(-\frac{1}{R_2}\cR\right).\]
It follows that  $S=aI+b\cR$, where
\begin{equation}\label{Sab}
a=\frac{\kappa_1}{|\kappa_1+\kappa_2|^{1/2}}
\,\,\,\,\mbox{and}\,\,\,\,
b=-\frac{\kappa_1+\kappa_2}{|\kappa_1+\kappa_2|^{1/2}}.
\end{equation}
 In particular, $S\cR =\cR S$. 
 Therefore, we may assume that, as a differentiable manifold,  $M^n(\kappa)=\mathcal{O} \times M_1^{k-1} \times M_2^{n-k}$, and that $f$ is parametrised by \eqref{param2}.

 \subsubsection{A parametrisation of Example \ref{ex:int1}}

 Here $\kappa_1\kappa_2<0$ and $\kappa_1+\kappa_2<0$, with, say, $\kappa_2<0<\kappa_1$. Writing 
 $\kappa_1=\frac{1}{R_1^2}$ and  $\kappa_2=-\frac{1}{R_2^2}$ (so that $R_1^2>R_2^2$), then \eqref{Sab} becomes
  $$ a=\frac{R_2}{R_1}\frac{1}{\sqrt{R_1^2-R_2^2}}\,\,\,\,\mbox{and}\,\,\,\,b=
 \frac{\sqrt{R_1^2-R_2^2}}{R_1R_2}.
 $$
 Since  $S=aI+b\cR$, then
$$S|_{\ker(\cR)}=aI=\frac{R_2}{R_1}\frac{1}{\sqrt{R_1^2-R_2^2}}I,\quad
S|_{\ker(I-\cR)}=(a+b)I=\frac{R_1}{R_2}\frac{1}{\sqrt{R_1^2-R_2^2}}I,$$
and
$$S\zeta=(a+br)\zeta=\frac{r(R_1^2-R_2^2)+R_2^2}{R_1R_2\sqrt{R_1^2-R_2^2}}\zeta.$$
Hence, the principal curvatures of $f$ are
\begin{equation}\label{pcs1}
\lambda_1=\frac{r(R_1^2-R_2^2)+R_2^2}{R_1R_2\sqrt{R_1^2-R_2^2}},\quad \lambda_i=\frac{R_2}{R_1}\frac{1}{\sqrt{R_1^2-R_2^2}}, \quad \mbox{and} \quad \lambda_j=\frac{R_1}{R_2}\frac{1}{\sqrt{R_1^2-R_2^2}},
\end{equation}
where $2\leq i\leq k$ and $k+1\leq j\leq n$. 
By \eqref{dert}, together with $t=1-r$, we have
$$r'(s)=2\<S\tfrac{\partial}{\partial s}, \zeta\> =2\sqrt{r(s)(1-r(s))}\frac{R_1^2\,r(s)+R_2^2(1-r(s))}{R_1R_2\sqrt{R_1^2-R_2^2}},$$
whose solutions are 
\[
r(s) = \frac{R_2^2 \tan^2\Big(\frac{s}{\sqrt{R_1^2-R_2^2}}+\tfrac{R_1R_2}{2}C\Big)}{R_1^2 + R_2^2 \tan^2\Big(\frac{s}{\sqrt{R_1^2-R_2^2}}+\tfrac{R_1R_2}{2}C\Big)},
\]
where $C=\tfrac{2}{R_1R_2}\tan^{-1}\Big(\pm\tfrac{R_1}{R_2}\sqrt{\tfrac{r(0)}{1-r(0)}}\Big)$.
After translating the parameter $s \to s+\tfrac{R_1R_2}{2}C\sqrt{R_1^2-R_2^2}$, we may assume that, on a (sufficiently) small interval,
\[
r(s) = \frac{R_2^2 \tan^2\theta(s)}{R_1^2 + R_2^2 \tan^2\theta(s)},
\]
where $\theta(s)=\tfrac{s}{\sqrt{R_1^2-R_2^2}}$. 
For (possibly small) $s>0$, equation \eqref{abder} gives
\begin{equation}\label{abprime}
    \alpha'(s) =\frac{R_1}{\sqrt{R_1^2+R_2^2 \tan^2\theta(s)}}\quad \mbox{and} \quad  \beta'(s)=
     \frac{R_2\tan\theta(s)}{\sqrt{R_1^2+ R_2^2\tan^2\theta(s)}}.
\end{equation}
We first note that $\beta(s) = -R_2\tanh^{-1}(u(s))+c$, where $u(s)=\frac{\sqrt{R_1^2+R_2^2\tan^2\theta(s)}}{\sqrt{R_1^2-R_2^2}}$ and $c\in \mathbb{C}$. But then, since $\sqrt{R_1^2+R_2^2\tan^2\theta(s)}>\sqrt{R_1^2}>\sqrt{R_1^2-R_2^2}$, it follows that $u(s)>1$. Using the identity $\tanh^{-1}(u(s)) = \coth^{-1}(u(s)) + \frac{i\pi}{2}$, without loss of generality, we may write $\beta(s) = -R_2 \coth^{-1}(u(s))+\beta$ for some $\beta\in \br$ (the same argument applies when $s<0$).

We now have
\begin{equation}\label{as3}
      \alpha(s) = R_1 \tan^{-1}\left(\frac{\sqrt{R_1^2-R_2^2}\tan\theta(s)}{\sqrt{R_1^2+R_2^2\tan^2\theta(s)}}\right) + \alpha,\quad  \alpha\in\br,
    \end{equation}
    \begin{equation}\label{bs3}
    \beta(s) = -R_2\coth^{-1}\left(\frac{\sqrt{R_1^2+R_2^2\tan^2\theta(s)}}{\sqrt{R_1^2-R_2^2}}\right)+\beta,\quad \beta\in \br.
\end{equation}
On the other hand, equation~\eqref{as3} implies that $$\cot\left(\frac{\alpha(s)-\alpha}{R_1}\right)=\frac{\sqrt{R_1^2\cot^2\theta(s)+R_2^2}}{\sqrt{R_1^2-R_2^2}},$$ 
and if $R_1\lambda_i^h(x_1)=\cot (\alpha/R_1)$, then
\begin{equation}\label{cotas}
\frac{\sin(\alpha(s)/R_1)+R_1\lambda_i^h(x_1)\cos(\alpha(s)/R_1)}
{\cos(\alpha(s)/R_1)-R_1\lambda_i^h(x_1)\sin(\alpha(s)/R_1)}=-\cot\left(\frac{\alpha(s)-\alpha}{R_1}\right).
\end{equation}
It follows from \eqref{p2}, \eqref{abprime} and \eqref{cotas} that
\begin{eqnarray*}
\lambda_i(x)&=&-\frac{\beta'(s)}{R_1}\frac{\sin(\alpha(s)/R_1)+R_1\lambda_i^h(x_1)\cos(\alpha(s)/R_1)}
{\cos(\alpha(s)/R_1)-R_1\lambda_i^h(x_1)\sin(\alpha(s)/R_1)}\\
&=& \frac{R_2}{R_1}\frac{1}{\sqrt{R_1^2-R_2^2}}.
\end{eqnarray*}
Therefore, the principal curvatures $\lambda_i$ of
$f$ with respect to the unit normal vector field $\xi$ are as in \eqref{pcs1} if the principal curvatures $\lambda_i^h$ of $h$ are all equal to $\frac{1}{R_1}\cot (\alpha/R_1)$ for $2\leq i\leq k$.
The latter means that  $h$ is a geodesic hypersphere of radius $\alpha/R_1$
in $\tM^k(\kappa_1)$.

Similarly, from \eqref{bs3} we obtain
\[
\coth\left(\frac{\beta(s)-\beta}{R_2}\right)=-\frac{\sqrt{R_1^2+ R_2^2\tan^2\theta(s)}}{\sqrt{R_1^2-R_2^2}},
\]
and if $R_2\lambda_j^g(x_2)=\coth(\beta/R_2)$, then
\begin{equation}\label{cothbs1}
\frac{- \sinh(\beta(s)/R_2) + R_2\lambda^g_j(x_2)\cosh(\beta(s)/R_2)}{\cosh(\beta(s)/R_2)-R_2\lambda^g_j(x_2)\sinh(\beta(s)/R_2)}
=-\coth\left(\frac{\beta(s)-\beta}{R_2}\right)
\end{equation}
It follows from \eqref{p3},  \eqref{abprime} and \eqref{cothbs1} that
\begin{eqnarray*}
\lambda_j(x)&=&\frac{\alpha'(s)}{R_2}\frac{- \sinh(\beta(s)/R_2) +R_2\lambda^g_j(x_2)\cosh(\beta(s)/R_2)}{\cosh(\beta(s)/R_2)-R_2\lambda^g_j(x_2)\sinh(\beta(s)/R_2)}
\\
&=& \frac{R_1}{R_2}\frac{1}{\sqrt{R_1^2-R_2^2}}, \quad k+1\leq j\leq n.
\end{eqnarray*}
Hence, the principal curvatures $\lambda_j$ of
$f$ with respect to the unit normal vector field $\xi$ are as in \eqref{pcs1} if the principal curvatures $\lambda_j^g$ of $g$ are all equal to $\frac{1}{R_2}\coth (\beta/R_2)$ for $k+1\leq j\leq n$.
The latter means that  $g$ is a geodesic hypersphere of radius $\beta/R_2$ in $\tM^{n-k+1}(\kappa_2)$.

Therefore, the hypersurface in Example \ref{ex:int1} can be parametrised as in \eqref{param2}, with $h$ and $g$ being geodesic hyperspheres of  $\tM^k(\kappa_1)$ and $\tM^{n-k+1}(\kappa_2)$, respectively, and the functions $\alpha(s)$ and $\beta(s)$ being given by 
\eqref{as3} and \eqref{bs3}, respectively. Thus, it is a doubly rotational hypersurface of $\tM^k(\kappa_1)\times\tM^{n-k+1}(\kappa_2)$
in the sense of \cite[Example~1]{CT2}.

To recover the description of $f$ as in Example \ref{ex:int1}, we first apply~\eqref{as3} and~\eqref{bs3} to obtain the following:
\[\begin{array}{l}
h_{\alpha(s)}(x_1)=\cos(\alpha(s)/R_1)h(x_1)+R_1\sin(\alpha(s)/R_1)N^h(x_1)\\
\hspace*{9ex}=\cos\left(\frac{\alpha(s)-\alpha}{R_1}\right)(\cos(\alpha/R_1)h(x_1)+R_1\sin(\alpha/R_1)N^h(x_1))\\
\hspace*{12ex}+R_1\sin\left(\frac{\alpha(s)-\alpha}{R_1}\right)(-\frac{1}{R_1}\sin(\alpha/R_1)h(x_1)+\cos(\alpha/R_1)N^h(x_1))\vspace{1ex}\\
\hspace*{9ex}= \frac{\sqrt{R_1^2\cos^2\theta(s)+R_2^2\sin^2\theta(s)}}{R_1}(\cos(\alpha/R_1)h(x_1)+R_1\sin(\alpha/R_1)N^h(x_1))
\vspace{1ex}\\\hspace*{11ex}
+ 
\sqrt{R_1^2-R_2^2}\,\sin\theta(s)(-\frac{1}{R_1}\sin(\alpha/R_1)h(x_1)+\cos(\alpha/R_1)N^h(x_1)),
\end{array}
\]
\[\begin{array}{l}
g_{\beta(s)}(x_2)=\cosh(\beta(s)/R_2)g(x_2)+R_2\sinh(\beta(s)/R_2)N^g(x_2)\\
\\
\hspace*{9ex}=\cosh\left(\frac{\beta(s)-\beta}{R_2}\right)(\cosh(\beta/R_2)g(x_2)+R_2\sinh(\beta/R_2)N^g(x_2))\\
\hspace*{12ex}+R_2\sinh\left(\frac{\beta(s)-\beta}{R_2}\right)(\frac{1}{R_2}\sinh(\beta/R_2)g(x_2)+\cosh(\beta/R_2)N^g(x_2))\vspace{1ex}\\
\hspace*{9ex}= \frac{\sqrt{R_1^2\cos^2\theta(s)+R_2^2\sin^2\theta(s)}}{R_2}(\cosh(\beta/R_2)g(x_2)+R_2\sinh(\beta/R_2)N^g(x_2))
\vspace{1ex}\\\hspace*{11ex}
-
\sqrt{R_1^2-R_2^2}\,\cos\theta(s)( \frac{1}{R_2}\sinh(\beta/R_2)g(x_2)+\cosh(\beta/R_2)N^g(x_2)).
\end{array}
\]
Note that the image of the map $x_1\mapsto (\cos(\alpha/R_1)h(x_1)+R_1\sin(\alpha/R_1)N^h(x_1))$  is a point $P_0\in \tM^k(\kappa_1)$. Also, the image of the map $x_2\mapsto (\cosh(\beta/R_2)g(x_2)+R_2\sinh(\beta/R_2)N^g(x_2))$ is a point  $Q_0\in \tM^{n-k+1}(\kappa_2)$. 
Let $\{e_1, \ldots, e_{k+1}\}$ and $\{f_1, \ldots, f_{n-k+2}\}$ be orthonormal and pseudo-orthonormal bases of $\mathbb{R}^{k+1}\supset \tM^k(\kappa_1)$ and $\mathbb{R}^{n-k+2,1}\supset \tM^{n-k+1}(\kappa_2)$, respectively, such that $P_0=R_1e_1$ and $Q_0=R_2f_1$. 
Then, the hypersurface can be parametrised by the map $f\colon M^n=\mathcal{O}\times \mathbb{S}^{k-1}\times \mathbb{S}^{n-k}\to\tM^k(\kappa_1)\times  \tM^{n-k+1}(\kappa_2) $, given by
\[
\begin{array}{l}
f(s,x_1, x_2)=\sqrt{R_1^2\cos^2\theta(s)+R_2^2\sin^2\theta(s)}e_1+\sqrt{R_1^2-R_2^2}\,\sin\theta(s)\, i(x_1)\vspace{1ex}\\
\hspace*{13ex}+\sqrt{R_1^2\cos^2\theta(s)+R_2^2\sin^2\theta(s)}f_1-
\sqrt{R_1^2-R_2^2} \cos \theta(s)\, j(x_2),
\end{array}
\]
where $i\colon \mathbb{S}^{k-1}\to \mathbb{R}^k=\{e_1\}^\perp$ and $j\colon \mathbb{S}^{n-k}\to \mathbb{R}^{n-k+1}=\{f_1\}^\perp$ are inclusions.

Finally, we observe that the map 
$\iota\colon  M^n\to \mathbb{R}^k\times  \mathbb{R}^{n-k+1}= \mathbb{R}^{n+1}$,
given by
\[\iota(s, x_1, x_2)=\sqrt{R_1^2-R_2^2}\,(\sin\theta(s)\, i(x_1)- \cos \theta(s)\, j(x_2)),\]
parametrises a sphere of curvature $\kappa$ given by $\kappa^{-1}=\kappa_1^{-1}+\kappa_2^{-1}$ in $\mathbb{R}^{n+1}$, and that 
$f(x)=\iota(x)+v(x)w$
for every $x=(s,x_1, x_2)\in M^n$, where $w$ is the light-like vector $w=e_1+f_1\in \mathbb{R}^{2,1}=(\mathbb{R}^{n+1})^\perp$ and $v(x)=\sqrt{R_1^2\cos^2\theta(s)+R_2^2\sin^2\theta(s)}$. In particular, $f(M^n)$ is contained in the intersection of $\tM^k(\kappa_1)\times \tM^{n-k+1}(\kappa_2)\subset \mathbb{R}^{n+3, 1}$ with the degenerate hyperplane of $\mathbb{R}^{n+3, 1}$ through the origin orthogonal to $w$. 

 \subsubsection{Parametrisations for the Examples \ref{ex:int2}} \label{alternative}
 
  In these examples, both $\kappa_1$ and $\kappa_2$ are negative; writing $\kappa_1=-\frac{1}{R_1^2}$ and  $\kappa_2=-\frac{1}{R_2^2}$, then \eqref{Sab} becomes
  $$ a=\frac{R_2}{R_1}\frac{1}{\sqrt{R_1^2+R_2^2}}\,\,\,\,\mbox{and}\,\,\,\,b=
 -\frac{\sqrt{R_1^2+R_2^2}}{R_1R_2}.
 $$
 Then
 $$S|_{\ker(\cR)}=aI=\frac{R_2}{R_1}\frac{1}{\sqrt{R_1^2+R_2^2}}I, \quad
S|_{\ker(I-\cR)}=(a+b)I=-\frac{R_1}{R_2}\frac{1}{\sqrt{R_1^2+R_2^2}}I,$$
and
$$S\zeta=(a+br)\zeta=\frac{r(R_1^2+R_2^2)-R_2^2}{R_1R_2\sqrt{R_1^2+R_2^2}}\zeta,$$
hence the principal curvatures of $f$ are
\begin{equation}\label{pcs}
\lambda_1=\frac{r(R_1^2+R_2^2)-R_2^2}{R_1R_2\sqrt{R_1^2+R_2^2}},\quad \lambda_i=\frac{R_2}{R_1}\frac{1}{\sqrt{R_1^2+R_2^2}}, \quad \mbox{and}\quad \lambda_j=-\frac{R_1}{R_2}\frac{1}{\sqrt{R_1^2+R_2^2}},
\end{equation}
where $2\leq i\leq k$ and $k+1\leq j\leq n$.
Now, from \eqref{dert} we have
\begin{equation} \label{ode}
r'(s)=2\< S\tfrac{\partial}{\partial s}, \zeta\> =2\sqrt{r(s)(1-r(s))}\,\frac{R_1^2r(s)-R_2^2(1-r(s))}{R_1R_2\sqrt{R_1^2+R_2^2}}.
\end{equation}
The solutions of \eqref{ode} are the constant function 
\begin{equation}\label{constsol}r(s)=\frac{R_2^2}{R_1^2+R_2^2},
\end{equation}
as well as
\[
r(s) = \frac{R_2^2\tanh^2\Big(\tfrac{s}{\sqrt{R_1^2+R_2^2}}+\tfrac{R_1R_2}{2}C\Big)}{R_1^2+R_2^2\tanh^2\Big(\tfrac{s}{\sqrt{R_1^2+R_2^2}}+\tfrac{R_1R_2}{2}C\Big)},
\]
where $C = \tfrac{2}{R_1R_2}\tanh^{-1}\Big(\pm\tfrac{R_1}{R_2}\sqrt{\tfrac{r(0)}{1-r(0)}}\Big)$. After a translation of the parameter $s$, we may assume that
\[
r(s) = \frac{R_2^2\tanh^2 \theta(s)}{R_1^2+R_2^2\tanh^2 \theta(s)},
\]
where $\theta(s)=\tfrac{s}{\sqrt{R_1^2+R_2^2}}$.
From \eqref{abder}, assuming (possibly small) $s> 0$, we obtain 
\begin{equation}\label{derab2}
\alpha'(s)=\frac{R_1}{\sqrt{R_1^2+R_2^2\tanh^2\theta(s)}}\quad \mbox{and}\quad
\beta'(s)= \frac{R_2\tanh\theta(s)}{\sqrt{R_1^2+R_2^2\tanh^2\theta(s)}}.
\end{equation}
 Therefore,
\begin{equation}\label{as}
\alpha(s)=R_1\tanh^{-1}\left(\frac{\sqrt{R_1^2+R_2^2}\tanh\theta(s)}{\sqrt{R_1^2+R_2^2\tanh^2\theta(s)}}\right)+\alpha, \quad \alpha\in \mathbb{R},
\end{equation}
and 
\begin{equation}\label{bs}
\beta(s)=R_2\tanh^{-1}\left(\frac{\sqrt{R_1^2 + R_2^2\tanh^2
\theta(s)}}{\sqrt{R_1^2+R_2^2}}\right)+\beta, \quad \beta\in \mathbb{R}.
\end{equation}
From \eqref{as} we obtain
\begin{equation*}\label{cothas}
\coth\left(\frac{\alpha(s)-\alpha}{R_1}\right)=\frac{\sqrt{R_1^2+R_2^2\tanh^2\theta(s)}}{\sqrt{R_1^2+R_2^2}\tanh\theta(s)
},
\end{equation*}
and if $R_1\lambda_i^h(x_1)=\coth (\alpha/R_1)$, then
\begin{equation}\label{prev}
\frac{-\sinh(\alpha(s)/R_1)+R_1\lambda_i^h(x_1)\cosh(\alpha(s)/R_1)}
{\cosh(\alpha(s)/R_1)-R_1\lambda_i^h(x_1)\sinh(\alpha(s)/R_1)}=-\coth\left(\frac{\alpha(s)-\alpha}{R_1}\right).
\end{equation}
It follows from \eqref{p2}, \eqref{derab2} and \eqref{prev} that
\begin{eqnarray*}
\lambda_i(x)&=&-\frac{\beta'(s)}{R_1}\frac{-\sinh(\alpha(s)/R_1)+R_1\lambda_i^h(x_1)\cosh(\alpha(s)/R_1)}
{\cosh(\alpha(s)/R_1)-R_1\lambda_i^h(x_1)\sinh(\alpha(s)/R_1)}\\
&=& \frac{R_2}{R_1}\frac{1}{\sqrt{R_1^2+R_2^2}}.
\end{eqnarray*}
Therefore, the principal curvatures $\lambda_i$ of
$f$ given by \eqref{param2} with respect to the unit normal vector field $\xi$ are as in \eqref{pcs} if the principal curvatures $\lambda_i^h$ of $h$ are all equal to $\frac{1}{R_1}\coth (\alpha/R_1)$ for $2\leq i\leq k$.
The latter means that  $h$ is a geodesic hypersphere of radius $\alpha/R_1$
in $\tM^k(\kappa_1)$.

Similarly, from \eqref{bs} we find
\begin{equation}\label{cothbs}
\tanh\left(\frac{\beta(s)-\beta}{R_2}\right)=\frac{\sqrt{R_2^2\tanh^2\theta(s)+R_1^2}}{\sqrt{R_1^2+R_2^2}},
\end{equation}
and if $R_2\lambda_j^g(x_2)=\tanh (\beta/R_2)$, then
\begin{equation}\label{tanh}\frac{- \sinh(\beta(s)/R_2) +R_2\lambda^g_j(x_2)\cosh(\beta(s)/R_2)}{\cosh(\beta(s)/R_2)-R_2\lambda^g_j(x_2)\sinh(\beta(s)/R_2)}
=-\tanh\left(\frac{\beta(s)-\beta}{R_2}\right)
\end{equation}
It follows from \eqref{p3},  \eqref{derab2} and \eqref{tanh} that
\begin{eqnarray*}
\lambda_j(x)&=&\frac{\alpha'(s)}{R_2}\frac{- \sinh(\beta(s)/R_2) +R_2\lambda^g_j(x_2)\cosh(\beta(s)/R_2)}{\cosh(\beta(s)/R_2)-R_2\lambda^g_j(x_2)\sinh(\beta(s)/R_2)}
\\
&=& -\frac{R_1}{R_2}\frac{1}{\sqrt{R_1^2+R_2^2}}, \quad k+1\leq j\leq n.
\end{eqnarray*}
Hence, the principal curvatures $\lambda_j$ of
$f$ given by \eqref{param2} with respect to the unit normal vector field $\xi$ are as in \eqref{pcs} if the principal curvatures $\lambda_j^g$ of $g$ are all equal to $\frac{1}{R_2}\tanh (\beta/R_2)$ for $k+1\leq j\leq n$.
The latter means that  $g$ is an equidistant hypersurface in $\tM^{n-k+1}(\kappa_2)$ at a distance $\beta/R_2$ of a totally geodesic hypersurface.

Therefore, the hypersurface  can be parametrised as in \eqref{param2} with $h$ being a geodesic hypersphere of  $\tM^k(\kappa_1)$ and $g$ being an equidistant hypersurface of $\tM^{n-k+1}(\kappa_2)$, and the functions $\alpha(s)$ and $\beta(s)$  being given by 
\eqref{as} and \eqref{bs}, respectively. Thus, it is a doubly rotational hypersurface of $\tM^k(\kappa_1)\times\tM^{n-k+1}(\kappa_2)$
in the sense of \cite[Example~1]{CT2}.

Now note that
\[\begin{array}{l}
h_{\alpha(s)}(x_1)= \cosh(\alpha(s)/R_1)h(x_1) + R_1\sinh(\alpha(s)/R_1)N^h(x_1)\\
\hspace*{9ex}= \frac{\sqrt{R_1^2\cosh^2\theta(s)+R_2^2\sinh^2\theta(s)}}{R_1}(\cosh(\alpha/R_1)h(x_1)+R_1\sinh(\alpha/R_1)N^h(x_1))
\vspace{1ex}\\\hspace*{12ex}
+ 
\sqrt{R_1^2+R_2^2}\,\sinh\theta(s)(\frac{1}{R_1}\sinh(\alpha/R_1)h(x_1)+\cosh(\alpha/R_1)N^h(x_1)),
\end{array}
\]
and, similarly,
\[\begin{array}{l}
g_{\beta(s)}(x_2)=\cosh(\beta(s)/R_2)g(x_2)+R_2\sinh(\beta(s)/R_2)N^g(x_2)\\
\\
\hspace*{9ex}= \frac{\sqrt{R_1^2+R_2^2}}{R_2}\cosh\theta(s)(\cosh(\beta/R_2)g(x_2)+R_2\sinh(\beta/R_2)N^g(x_2))
\vspace{1ex}\\\hspace*{11ex}
+
\sqrt{R_1^2\cosh^2\theta(s)+R_2^2\sinh^2\theta(s)}( \frac{1}{R_2}\sinh(\beta/R_2)g(x_2)+\cosh(\beta/R_2)N^g(x_2)).
\end{array}
\]
Note also that the image of the map $x_1\mapsto (\cosh(\alpha/R_1)h(x_1)+R_1\sinh(\alpha/R_1)N^h(x_1))$  is a point $P_0={R_1}e_1\in \tM^k(\kappa_1)$. Therefore, the map
$ x_1\mapsto \frac{1}{R_1}(\sinh(\alpha/R_1)h(x_1)+R_1\cosh(\alpha/R_1)N^h(x_1))$ 
parametrises an inclusion $i\colon\mathbb{S}^{k-1}\to \mathbb{R}^{k}=\Span(e_1)^\perp$.
On the other hand,  the image of the map $x_2\mapsto \rho(x_2)=(\cosh(\beta/R_2)g(x_2)+R_2\sinh(\beta/R_2)N^g(x_2))$ is a totally geodesic hypersurface of $\tM^{n-k+1}(\kappa_2)$, hence $j=\frac{1}{R_2}\rho$ gives an inclusion
$j\colon \mathbb{H}^{n-k}\to \mathbb{R}^{n-k+1,1}$, where $\mathbb{R}^{n-k+1,1}$ is the linear hyperplane containing $\rho(\tM^{n-k})$. Let  $f_1$ span the orthogonal complement of $\mathbb{R}^{n-k+1,1}$ in $\mathbb{R}^{n-k+2,1}$. 
Then, the hypersurface can be parametrised by the map $f\colon M^n=\mathcal{O}\times \mathbb{S}^{k-1}\times \mathbb{H}^{n-k}\to\tM^k(\kappa_1)\times  \tM^{n-k+1}(\kappa_2) $, given by
\begin{equation}\label{alt}
\begin{array}{l}
f(s,x_1, x_2)=\sqrt{R_1^2\cosh^2\theta(s)+R_2^2\sinh^2\theta(s)}e_1+\sqrt{R_1^2+R_2^2}\,\sinh\theta(s)\, i(x_1) \vspace{1ex}\\
\hspace*{13ex}+\sqrt{R_1^2\cosh^2\theta(s)+R_2^2\sinh^2\theta(s)}f_1+
\sqrt{R_1^2+R_2^2} \cosh \theta(s)\, j(x_2).
\end{array}
\end{equation}
The map 
$\iota\colon  M^n\to \mathbb{R}^k\times  \mathbb{R}^{n-k+1,1}= \mathbb{R}^{n+1,1}$,
given by
\[\iota(s, x_1, x_2)=\sqrt{R_1^2+R_2^2}\,(\sinh\theta(s)\, i(x_1)+ \cosh \theta(s)\, j(x_2)),\]
parametrises a hyperbolic space of curvature $\kappa$ given by $\kappa^{-1}=\kappa_1^{-1}+\kappa_2^{-1}$ in $\mathbb{R}^{n+1,1}$, and
\(f(x)=\iota(x)+v(x)w\)
for every $x=(s,x_1, x_2)\in M^n$, where $w$ is the light-like vector $w=e_1+f_1\in \mathbb{R}^{2,1}=(\mathbb{R}^{n+1,1})^\perp$ and $v(x)=\sqrt{R_1^2\cosh^2\theta(s)+R_2^2\sinh^2\theta(s)}$. In particular, $\langle f(x), w\rangle =0$ for every $x=(s,x_1, x_2)\in M^n$, so $f(M^n)$ is an open subset of the intersection of $\tM^k(\kappa_1)\times \tM^{n-k+1}(\kappa_2)\subset \mathbb{R}^{n+3,2}$ with the hyperplane of $\mathbb{R}^{n+3,2}$ through the origin orthogonal to $w$. Since $\langle e_1, e_1\rangle=-1$ and $\langle f_1, f_1\rangle=1$, \eqref{alt} parametrises a hypersurface $f\colon M^n(\kappa)\to \tM^k(\kappa_1)\times \tM^{n-k+1}(\kappa_2)$ as in the case~\ref{it:ex41} of Example~\ref{ex:int2}.

We observe that if $r(s)$ is a solution of \eqref{ode}, then $t(s)=1-r(s)$ is a solution of the equation obtained from  \eqref{ode} by switching the roles of $R_1$ and $R_2$. The hypersurface obtained as before from such a solution corresponds to the one in the case~\ref{it:ex42} of Example~\ref{ex:int2}.

For the constant solution \eqref{constsol}, the principal curvatures of $f$  are 
\begin{equation}\label{pcsconst}
\lambda_1=0, \quad \lambda_i=\frac{R_2}{R_1}\frac{1}{\sqrt{R_1^2+R_2^2}},  \quad \mbox{and}\quad  \lambda_j=-\frac{R_1}{R_2}\frac{1}{\sqrt{R_1^2+R_2^2}}, 
\end{equation}
where $ 2\leq i\leq k$ and $k+1\leq j\leq n$. From \eqref{abder} we obtain
\begin{equation}\label{derab}
\alpha'(s)=\frac{R_1}{\sqrt{R_1^2+R_2^2}} \quad \mbox{and}\quad \beta'(s)=\frac{R_2}{\sqrt{R_1^2+R_2^2}},
\end{equation}
hence
$$
\alpha(s)=\frac{R_1s}{\sqrt{R_1^2+R_2^2}} \quad \mbox{and}\quad \beta(s)=\frac{R_2s}{\sqrt{R_1^2+R_2^2}},
$$
after a translation of the parameter $s$.
Comparing \eqref{pcsconst} with \eqref{p1}--\eqref{p3} and using \eqref{derab}, it follows that
$h$ and $g$ are horospheres of $\tM^{k}(\kappa_1)$
and $\tM^{n-k+ 1}(\kappa_2)$ with principal curvatures $R_1^{-1}$ and $R_2^{-1}$, respectively.

We can assume that $h$ and $g$ are maps 
$ h\colon  \mathbb{R}^{k-1}\to \tM^{k}(\kappa_1)\subset \mathbb{R}^{2,1}\times \mathbb{R}^{k-1}=\mathbb{R}^{k+1,1}$
and 
$g\colon  \mathbb{R}^{n-k}\to \tM^{n-k+1}(\kappa_2)\subset \mathbb{R}^{2,1}\times \mathbb{R}^{n-k}=\mathbb{R}^{n-k+2,1}$
given by
\begin{equation}\label{hg}
h(x_1)=R_1(1+\|x_1\|^2, 1 , x_1)\quad \mbox{and}\quad g(x_2)=R_2(1+\|x_2\|^2, 1 , x_2),
\end{equation}
 respectively, where the first two coordinates in the expressions of $h$ and $g$ in \eqref{hg} are with respect to pseudo-orthonormal bases $\{e_1, e_2\}$ and $\{f_1, f_2\}$ of the corresponding Lorentzian planes such that $\langle e_1, e_1\rangle=\langle f_1, f_1\rangle=\langle e_2, e_2\rangle=\langle f_2, f_2\rangle=0$ and
 $\langle e_1, e_2\rangle=-1/2=\langle f_1, f_2\rangle$. The vector fields
 $$N^h(x_1)=(-1+\|x_1\|^2, 1, \, x_1)\quad \mbox{and}\quad  N^g(x_2)= (-1+\|x_2\|^2, 1, \, x_2)
$$
are unit normal vector fields along $h$ and $g$, respectively. The hypersurface $f$ can be parametrised by 
$$f\colon M^n=\mathcal{O}\times \mathbb{R}^{k-1}\times \mathbb{R}^{n-k}\to \tM^{k}(\kappa_1)\times \tM^{n-k+1}(\kappa_2)\subset \mathbb{R}^{k+1,1}\times \mathbb{R}^{n-k+2,1}=\mathbb{R}^{n-k+3,2},$$
\begin{equation}\label{casec}
f(x)=\cosh\theta(s)h(x_1)+R_1\sinh\theta(s)
N^h(x_1)+\cosh\theta(s)g(x_2)+R_2\sinh\theta(s)
N^g(x_2),
\end{equation}
for every $x=(s, x_1, x_2)\in M^n$, where $\theta(s)=\frac{s}{\sqrt{R_1^2+R_2^2}}$.
This corresponds to the isometric immersion $f\colon M^n(\kappa)\to \tM^k(\kappa_1)\times \tM^{n-k+1}(\kappa_2)$ given by Example \ref{ex:int2}\ref{it:ex43}, and describes it as a doubly rotational hypersurface. Notice that $w=R_2e_1-R_1f_1$ is such that $\langle w, w\rangle=0$, its orthogonal projections $w_1=R_2e_1$ and $w_2=-R_1f_1$ onto 
$\mathbb{R}^{k+1, 1}$ and $\mathbb{R}^{n-k+1, 1}$, respectively, satisfy $\langle w_1, w_1\rangle=0=\langle w_2, w_2\rangle$,  and $\langle f(x), w\rangle=0$ for all $x=(s,x_1, x_2)\in M^n$. 

To recover the presentation of 
Example \ref{ex:int2}\ref{it:ex43} from \eqref{casec},
let $\bar{w}=\frac{2}{R^2}(-R_2e_2+R_1f_2)$ (where $R^2 = R_1^2+R_2^2$), which satisfies $\langle \bar w, \bar w\rangle=0$ and $\langle  w, \bar w\rangle=1$. 
Define
\[v(x)=\langle f(x), \bar w\rangle= \frac{R_1R_2}{R^2}(\|x_1\|^2-\|x_2\|^2)(\cosh \theta(s) +\sinh \theta(s))\] for every $x=(s,x_1, x_2)\in M^n$, and $\iota\colon M^n\to \mathbb{R}^{n+3, 2}$ by $\iota(x)=f(x)-v(x)w$. Then 
$\iota(M^n)\subset \mathbb{R}^{n+1,1}=\Span\,(w, \bar w)^\perp$ and  $\langle \iota(x), \iota(x)\rangle=
\langle f(x), f(x)\rangle=\kappa_1^{-1}+ \kappa_2^{-1}=-R^{2}$ for every $x\in M^n$.
We write $\kappa = -R^{-2}$.
Therefore, endowing $M^n$ with the metric induced by $\iota$, the latter gives rise to an (umbilical) isometric immersion of $M^n(\kappa)$ into $\mathbb{R}^{n+1,1}$  such that $f(x)=\iota(x)+v(x)w$
for every $x\in M^n$. 

\section{A rescaling procedure}\label{rescaling}

In this section, by means of a rescaling procedure,  we show that any hypersurface $f\colon M^3(\varepsilon/2)\to\tM^2(\varepsilon)\times\tM^{2}(\varepsilon)$, $\varepsilon\in \{-1, 1\}$, with  $\rk S = 2$ and everywhere vanishing product angle function $C$,  gives rise to a hypersurface $f'\colon M^3(\kappa)\to\tM^2(\kappa_1)\times\tM^{2}(\kappa_2)$, 
  $\kappa = \frac{\kappa_1\kappa_2}{\kappa_1+\kappa_2}$, whose product angle function has the constant value $C=\frac{\kappa_1-\kappa_2}{\kappa_1+\kappa_2}$.

In what follows, we use subscripts to keep track of the corresponding factor. 
Consider the product $\tM_1^2(\varepsilon)\times\tM_2^{2}(\varepsilon)$, $\varepsilon\in \{-1, 1\}$,  lying in the product $\br^{3, \varepsilon}_1 \times \br^{3, \varepsilon}_2$ of two mutually orthogonal Euclidean or Lorentzian spaces, according as $\varepsilon=1$ or $\varepsilon=-1$, respectively. For $i =1,2$, denote $\ip_i$ the inner product on $\br^{3, \varepsilon}_i$, and $\pi_i\colon \br^{3, \varepsilon}_1 \times \br^{3, \varepsilon}_2 \to \br^{3, \varepsilon}_i$ the corresponding orthogonal projection (and its derivative $\pi_i^*$). Note that the projections $\pi_i$ do not depend on the inner products $\ip_1$ and $\ip_2$. We denote $\ip$ the inner product on $\br^{3, \varepsilon}_1 \times \br^{3, \varepsilon}_2$ defined by $\<X,Y\>= \<\pi_1 X, \pi_1 Y\>_1 + \<\pi_2 X, \pi_2 Y\>_2$, for $X, Y\in \br^{3, \varepsilon}_1 \times \br^{3, \varepsilon}_2$.

For a hypersurface $f\colon M^3(\varepsilon/2)\to \tM_1^2(\varepsilon)\times\tM_2^{2}(\varepsilon)$, $\varepsilon\in \{-1, 1\}$, with  $\rk S = 2$, the following properties are known from \cites{LVWY,LVWYhyp}: 

\begin{itemize}[leftmargin=1.5em]
  \item \label{it:frame'}
  The product angle function $C$ is everywhere vanishing. Equivalently, the eigenfunction $r$ of $\cR$ taking values in $(0,1)$ has the constant value $r=1/2$. Therefore, there exists an orthonormal frame $(E_1, E_2, E_3)$ on $M^3$ such that $\cR E_1=0$, $\cR E_2=E_2$ and 
  $\cR E_3=\frac{1}{2}E_3$.

  \item
  The integral curves of $E_3$ are geodesics in $M^3(\varepsilon/2)$ and are mapped by $f$ into geodesics of $\tM_1^2(\varepsilon)\times\tM_2^{2}(\varepsilon)$.
  
  \item
  The components $S_{ij}:=\<\nabla^0_{E_i}f_*E_j, \xi\>$ of the shape operator $S$ with respect to a unit normal vector field $\xi$ (where $\nabla^0$ is the flat Levi-Civita connection of $\ip$ on $\br^{3, \varepsilon}_1 \times \br^{3, \varepsilon}_2$) satisfy
  \begin{equation} \label{eq:Aij'}
    S_{13} = S_{23} = S_{33} = 0, \qquad  S_{11} S_{22} - S_{12}^2 = \frac{\varepsilon}{2}.
  \end{equation}
\end{itemize}

\smallskip
For $\kappa_i=\varepsilon/R_i^2$, $i=1, 2$, set $R^2=R_1^2+R_2^2$. Define the new inner products $\ip_i'$ on $\br^{3, \varepsilon}_i$ by scaling the existing ones $\ip_i$ as follows: $\ip'_i = R_i^{2} \ip_i$, for $i =1,2$, and define $\ip'=\ip'_1 + \ip'_2$. Equivalently, for $X, Y \in \br^{3, \varepsilon}_1 \times \br^{3, \varepsilon}_2$, 
\begin{equation} \label{eq:ip'}
  \<X, Y\>' = R_1^{2} \<\pi_1 X, \pi_1 Y\>_1 + R_2^{2}\<\pi_2 X, \pi_2 Y\>_2.
\end{equation}
Notice that $\pi_1$ and $\pi_2$ do not depend on how we define the inner product on the factors. We also notice that, although the identity mapping is clearly not an isometry between $\ip'$ and $\ip$ (unless $R_1=R_2 = 1$), it is \emph{affine} relative to the Levi-Civita connections for $\ip$ and $\ip'$. In other words, the Levi-Civita connection $(\nabla^0)'$ of $\ip'$ in $\br^{3, \varepsilon}_1 \times \br^{3, \varepsilon}_2$ is the same as $\nabla^0$, so $(\nabla^0)'_XY = \nabla^0_XY$, for any two vector fields $X$ and $Y$ in $\br^{3, \varepsilon}_1 \times \br^{3, \varepsilon}_2$.

Relative to the inner product $\ip'$, $\tM_i^2(\varepsilon)$  becomes $\tM_i^2(\kappa_i)$, and so $f$ becomes an immersion $f'\colon M^3\to \tM_1^2(\kappa_1)\times\tM_2^{2}(\kappa_2)\subset (\br^{3, \varepsilon}_1 \times \br^{3, \varepsilon}_2, \ip')$.

\begin{proposition} \label{prop:scale}
  The metric induced by $f'$ on $M^3$ from $\ip'$  has constant sectional curvature $\kappa=\frac{\kappa_1\kappa_2}{\kappa_1+\kappa_2}$, and the shape operator of $f'$ also has rank $2$.
\end{proposition}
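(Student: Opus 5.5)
The plan is to compute everything in the frame $(E_1,E_2,E_3)$ supplied by the properties quoted from \cites{LVWY,LVWYhyp}, exploiting that the identity map of $\br^{3,\varepsilon}_1\times\br^{3,\varepsilon}_2$ is affine between $\ip$ and $\ip'$. First I determine the induced metric $g'$. Since $\cR E_1=0$, we have $\pi_2 f_*E_1=0$; likewise $\pi_1 f_*E_2=0$. From $\cR E_3=\frac12 E_3$ we get $\<\pi_i f_*E_3,\pi_i f_*E_3\>_i=\frac12$ for $i=1,2$. Each $\ip'$-product then lives in a single factor, so $g'$ is diagonal in this frame, with
\[
g'(E_1,E_1)=R_1^2,\qquad g'(E_2,E_2)=R_2^2,\qquad g'(E_3,E_3)=\tfrac{R^2}{2}.
\]
For instance, $g'(E_1,E_3)=R_1^2\<\pi_1f_*E_1,\pi_1f_*E_3\>_1=R_1^2\<f_*E_1,f_*E_3\>=0$. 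Hence $e_1=E_1/R_1$, $e_2=E_2/R_2$, $e_3=\sqrt2E_3/R$ is $g'$-orthonormal, with $e_1\in\ker\cR'$ and $e_2\in\ker(I-\cR')$. Moreover $e_3$ is an eigenvector of $\cR'$ with eigenvalue $r'=\<\pi_2f_*e_3,\pi_2f_*e_3\>'=R_2^2/R^2$, a constant. This also gives the asserted value of $C$.

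Next I find the new unit normal. I take $\xi'=c\,(R_2^2\pi_1\xi+R_1^2\pi_2\xi)$. This vector is tangent to the product and $\ip'$-orthogonal to $f_*E_1$ and $f_*E_2$. It is also orthogonal to $f_*E_3$: indeed $\<\xi,f_*E_3\>=0$, and $\ip'$-orthogonality to $f_*E_3$ reduces to $aR_1^2=bR_2^2$ for $a\pi_1\xi+b\pi_2\xi$. Normalizing, using $\<\pi_i\xi,\pi_i\xi\>_i=\frac12$ since $t=\frac12$, gives $c^{-2}=R_1^2R_2^2R^2/2$. Because $(\nabla^0)'=\nabla^0$, the second fundamental form of $f'$ is
\[
S'_{ij}=\<\nabla^0_{E_i}f_*E_j,\xi'\>'=cR_1^2R_2^2S_{ij}=\tfrac{\sqrt2R_1R_2}{R}S_{ij}.
\]
Here the normal component of $\nabla^0_{E_i}f_*E_j$ with respect to $\tM$ drops out, since $\xi'$ is tangent to the product. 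By \eqref{eq:Aij'}, $S'$ annihilates $E_3$ and is a nonzero multiple of the rank-two block $(S_{ab})_{a,b\le2}$. So $\rk S'=2$.

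Finally I apply the Gauss equation \eqref{Gauss} for $f'$, with $\kappa_1,\kappa_2$ and $\cR'$, in the frame $(e_i)$. Since $\cR'$ is diagonal there and $S'e_3=0$, the only possibly nonzero curvature components are the sectional curvatures $K(e_i,e_j)$; the mixed terms vanish by diagonality. For the plane $e_1,e_3$ we get $\kappa_1(1-r')=\varepsilon/R^2$. For $e_2,e_3$ we get $\kappa_2r'=\varepsilon/R^2$. For $e_1,e_2$ the ambient part is $0$, and the $S'$ part is
\[
\frac{2R_1^2R_2^2}{R^2}\cdot\frac{S_{11}S_{22}-S_{12}^2}{R_1^2R_2^2}=\frac{\varepsilon}{R^2}.
\]
All three equal $\varepsilon/(R_1^2+R_2^2)=\kappa_1\kappa_2/(\kappa_1+\kappa_2)$. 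Since the curvature operator is diagonal with equal eigenvalues in an orthonormal frame, $M^3$ has constant curvature $\kappa$.

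The main obstacle I expect is the normal step: correctly identifying $\xi'$ and justifying that $S'$ is read off from $\nabla^0$ without an extra term coming from $\secf_h$. That term is $\ip'$-orthogonal to the product, so it disappears once $\xi'$ is shown to be tangent to $\tM$.
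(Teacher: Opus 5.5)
Your proof is correct and follows the paper's argument. It uses the same $\<\cdot,\cdot\>'$-orthonormal frame $E_1/R_1$, $E_2/R_2$, $\sqrt2\,E_3/R$, the same unit normal (your $c\,(R_2^2\pi_1\xi+R_1^2\pi_2\xi)$ is exactly the paper's \eqref{N}), the comparison of $S'$ with $S$ via $(\nabla^0)'=\nabla^0$, and the Gauss equation plane by plane. The one cosmetic difference is that you work in the unnormalized frame $E_i$, where the identity $\<X,\xi'\>'=cR_1^2R_2^2\<X,\xi\>$ for every $X$ (which you use implicitly and should state) gives the single factor $\sqrt2R_1R_2/R$ at once, instead of the paper's entry-by-entry computation.
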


\begin{proof}
Let $(E_1', E_2', E_3')$ be the frame on $M^3$ defined by  \begin{equation}\label{eq:newbasis}
        E_1' = R_1^{-1} E_1, \quad E_2' = R_2^{-1} E_2, \quad E_3' =\sqrt{2} R^{-1} \, E_3.
        \end{equation}
        Since $t=1-r=1/2$, the vector field  $\zeta$ defined by \eqref{zeta} is $\zeta=\frac{1}{2}E_3$ by the last two equations in \eqref{propR}. Therefore, \eqref{R} and \eqref{R2} give 
        $\pi_i^*f'_*E'_j=\delta_{ij}R_j^{-1}f'_*E_j$
 for $1\leq i,j\leq 2$, and 
$\pi_j^*f'_*E_3'=\frac{\sqrt{2}}{2}R^{-1}(f'_*E_3+\xi)$ for $j=1,2$. Since $R^2=R_1^2+R_2^2$, it follows from \eqref{eq:ip'} that $(E_1', E_2', E_3')$ is an orthonormal frame with respect to $\ip'$.  Moreover, using that
$\pi_j^*\xi=\frac{1}{2}((-1)^{j}f'_*E_3+\xi)$ for $j=1,2$ by \eqref{zeta} and \eqref{zeta2}, it follows from \eqref{eq:ip'} that
\begin{equation}\label{N}
        \xi' = \sqrt{2} R^{-1} (R_1^{-1}R_2 \pi_1^* \xi + R_2^{-1}R_1 \pi_2^* \xi)
  \end{equation}
  is a unit normal vector field along $f'$.
  Furthermore, the integral curves of the vector field $E_3'$ are the same as the integral curves of the vector field $E_3$, and as such, are geodesics of $M^3$ and are mapped by $f'$ into geodesics of $\tM_1^2(\kappa_1)\times\tM_2^{2}(\kappa_2)$ (since a geodesic in a Riemannian product remains such when we homothetically scale the metrics on the factors).

  The components of the shape operator $S'$ of $f'$ are given by $S'_{ij} = \<\nabla^0_{E_i'} f'_*E_j', \xi'\>'$ for $i,j=1,2,3$ (recall that the Levi-Civita connections of $\ip$ and of $\ip'$ coincide). We already know that $S'_{33}=0$. We compute the remaining components using~\eqref{eq:Aij'}, \eqref{eq:ip'} and \eqref{eq:newbasis}:
  \begin{align*}
    S'_{11} & = \<\nabla^0_{E_1'} f'_*E_1', \xi'\>' = R_1^{-2} \<\nabla^0_{E_1} f'_*E_1, \xi'\>' = R_1^{-2} \<\nabla^0_{E_1} f'_*E_1, \pi_1^* \xi'\>' = \<\nabla^0_{E_1} f'_*E_1, \pi_1^* \xi'\> \\
    &= \sqrt{2} R^{-1} R_1^{-1}R_2 \<\nabla^0_{E_1} f'_*E_1, \pi_1^* \xi\> = \sqrt{2} R^{-1} R_1^{-1}R_2 \<\nabla^0_{E_1} f'_*E_1, \xi\> = \sqrt{2} R^{-1}R_1^{-1}R_2 S_{11}, \\
    S'_{21} & = \<\nabla^0_{E_2'} f'_*E_1', \xi'\>' = R_1^{-1} R_2^{-1} \<\nabla^0_{E_2} f'_*E_1, \xi'\>' = R_1^{-1} R_2^{-1} \<\nabla^0_{E_2} f'_*E_1, \pi_1^* \xi'\>'\\& = R_2^{-1}R_1\<\nabla^0_{E_2} f'_*E_1, \pi_1^* \xi'\> 
    = \sqrt{2} R^{-1} \<\nabla^0_{E_2} f'_*E_1, \pi_1^* \xi\> = \sqrt{2} R^{-1} \<\nabla^0_{E_2} f'_*E_1, \xi\>\\& = \sqrt{2} R^{-1} S_{21},\\
    S'_{22} & = \<\nabla^0_{E_2'} f'_*E_2', \xi'\>' = R_2^{-2} \<\nabla^0_{E_2} f'_*E_2, \xi'\>' = R_2^{-2} \<\nabla^0_{E_2} f'_*E_2, \pi_2^* \xi'\>' = \<\nabla^0_{E_2} f'_*E_2, \pi_2^* \xi'\> \\
    &= \sqrt{2} R^{-1} R_2^{-1}R_1 \<\nabla^0_{E_2} f'_*E_2, \pi_2^* \xi\> = \sqrt{2} R^{-1} R_2^{-1}R_1 \<\nabla^0_{E_2} f'_*E_2, \xi\> = \sqrt{2} R^{-1} R_2^{-1}R_1 S_{22}, \\
    S'_{31} & = \<\nabla^0_{E_3'} f'_*E_1', \xi'\>' = \sqrt{2} R^{-1} R_1^{-1} \<\nabla^0_{E_3} f'_*E_1, \xi'\>' = \sqrt{2} R^{-1} R_1^{-1} \<\nabla^0_{E_3} f'_*E_1, \pi_1^* \xi'\>' \\
    &= \sqrt{2} R^{-1} R_1\<\nabla^0_{E_3}f'_*E_1, \pi_1^* \xi'\> = 2 R^{-2} R_2 \<\nabla^0_{E_3} f'_*E_1, \pi_1^* \xi\> = 2 R^{-2} R_1 \<\nabla^0_{E_3} f'_*E_1, \xi\> = 0,\\
    S'_{32} & = \<\nabla^0_{E_3'} f'_*E_2', \xi'\>' = \sqrt{2} R^{-1} R_2^{-1} \<\nabla^0_{E_3} f'_*E_2, \xi'\>' =\sqrt{2} R^{-1} R_2^{-1} \<\nabla^0_{E_3} f'_*E_2, \pi_2^* \xi'\>' \\
    &= \sqrt{2} R^{-1} R_2\<\nabla^0_{E_3} f'_*E_2, \pi_2^* \xi'\> = 2 R^{-2} R_1 \<\nabla^0_{E_3} f'_*E_2, \pi_1^* \xi\> = 2 R^{-2} R_1\<\nabla^0_{E_3} f'_*E_2, \xi\> = 0.
  \end{align*}

  It follows that $E_3'$ lies in the kernel of $S'$, and $S'_{11} S'_{22} - (S'_{12})^2 =  2 R^{-2} (S_{11} S_{22} - S_{12}^2)=\varepsilon R^{-2}=\kappa$, by~\eqref{eq:Aij'}.  Hence $\rk S' = 2$. Moreover, from Gauss equation, the only nonzero components of the curvature tensor $R'$ of $M^3$, up to $\mathbb{Z}_2$ symmetries, are
  \begin{align*}
  R'(E_1',E_2',E_2',E_1') & = \kappa, \\
  R'(E_1',E_3',E_3',E_1') & = \kappa_1\<\pi_1^*f'_*E_3',\pi_1^*f'_*E_3'\>' = 2\kappa_1R_1^2R^{-2} \<\pi_1^*f'_*E_3,\pi_1^*f'_*E_3\> = \varepsilon R^{-2}=\kappa,\\
  R'(E_2',E_3',E_3',E_2') & = \kappa_2\<\pi_2^*f'_*E_3',\pi_2^*f'_*E_3'\>' = 2\kappa_2R_2^2R^{-2}  \<\pi_2^*f'_*E_3,\pi_2^*f'_*E_3\> = \varepsilon R^{-2}=\kappa,
  \end{align*}
  by \eqref{eq:ip'} and \eqref{eq:newbasis}. This completes the proof.
  \end{proof}

\vspace{1ex}
The next example provides an explicit parametrisation of the hypersurface
$f\colon M^3(\kappa)\to\tM^2(\kappa_1)\times\tM^{2}(\kappa_2)$, $\kappa_1\kappa_2>0$, $\kappa = \frac{\kappa_1\kappa_2}{\kappa_1+\kappa_2}$, with $\rk S =2$ and constant product angle function $C=\frac{\kappa_1-\kappa_2}{\kappa_1+\kappa_2}$, that one obtains by applying the preceding rescaling procedure to \cite[Example~3.6]{LVWYhyp}. 

  \begin{example}\label{ex:3.6} Let  $\gamma_1\colon I_1\subset \mathbb{R}\to \tM^{2}(\kappa_1)$ and $\gamma_2\colon I_2\subset \mathbb{R}\to \tM^{2}(\kappa_2)$, where $I_1$ and $I_2$ are open intervals and $\kappa_1\kappa_2>0$,  be smooth unit-speed curves with curvatures $\lambda_1$ and $\lambda_2$, respectively, let $N_1$ and $N_2$ be unit normal vector fields along $\gamma_1$ and $\gamma_2$, respectively, and let $\theta\in \mathbb{R}$.  Define $f\colon \mathbb{R}\times I_1\times I_2\to \tM^2(\kappa_1)\times\tM^{2}(\kappa_2)$  by 
 \begin{equation}\label{param}
f(s, u_1, u_2)=(f_1(s, u_1), f_2(s, u_2)),
\end{equation}
where
 \begin{equation}\label{f1}
 f_1(s, u_1)=C_{\varepsilon}(s \cos \theta  / R_1) \gamma_1(u_1) + R_1 S_{\varepsilon}(s\cos \theta  / R_1)N_1(u_1)
\end{equation}
and
 \begin{equation}\label{f2}
f_2(s, u_2)=C_{\varepsilon}( s\sin \theta / R_2) \gamma_2(u_2) + R_2 S_{\varepsilon}(s\sin \theta  / R_2)N_2(u_2).
\end{equation}
Here $\varepsilon=\kappa_1/|\kappa_1|=\kappa_2/|\kappa_2|$, $\kappa_i=\varepsilon/R_i^2$, and $C_\varepsilon$ and $S_\varepsilon$ are as in \eqref{cs}. 
Restricted to the open subset of its regular points, the map $f$ defines a hypersurface having the coordinate vector fields as principal directions, with $0$ as one of its principal curvatures, the $s$-coordinate curves being its associated  curvature lines, and two nonzero principal curvatures
\begin{equation}\label{eq:pc}
\frac{- \sin\theta }{R_1}\frac{\varepsilon S_\varepsilon(s \cos \theta/R_1)+\lambda_1 R_1 C_\varepsilon(s \cos\theta/R_1)}{C_\varepsilon(s \cos\theta/R_1)-\lambda_1 R_1  S_\varepsilon(s \cos\theta/R_1)}, \,\, \frac{\cos \theta}{R_2}\frac{\varepsilon S_\varepsilon(s \sin \theta/R_2)+\lambda_2 R_2 C_\varepsilon(s \sin\theta/R_2)}{C_\varepsilon(s \sin\theta/R_2)-\lambda_2 R_2  S_\varepsilon(s \sin \theta/R_2)}.
\end{equation}
The coordinate vector fields are also principal directions of the tensor $\cR$ defined in \eqref{R}, with eigenvalues
$0, 1$ and $\sin^2 \theta$, the latter corresponding to the coordinate vector field $\partial/\partial s$. In particular, 
the tensors $S$ and $\cR$ satisfy $S\cR=\cR S$.
Taking $\theta=\arcsin{\left(\frac{\kappa_1}{\kappa_1+\kappa_2}\right)^{1/2}}$, the eigenfunction of $\cR$ taking values in $(0,1)$ has the constant value $r=\sin^2\theta=\frac{\kappa_1}{\kappa_1+\kappa_2}$, hence
 the product angle function has the constant value $C=2r-1=\frac{\kappa_1-\kappa_2}{\kappa_1+\kappa_2}$.
 Moreover, choosing the curves $\gamma_1$ and $\gamma_2$ to have constant curvatures $\lambda_1$ and $\lambda_2$, respectively,  with $\lambda_1\lambda_2=(\kappa_1\kappa_2)^{1/2}$,
one can check using Gauss equations that $f$ has constant sectional curvature  $\kappa = \frac{\kappa_1\kappa_2}{\kappa_1+\kappa_2}$. 
\end{example}

\begin{remark} Hypersurfaces of $\tM^2(\varepsilon)\times\tM^{2}(\varepsilon)$, $\varepsilon\in \{-1, 1\}$, with constant sectional curvature and $\rk S=2$ satisfying  $S\cR=\cR S$, correspond to \cite[Theorem~1.1(1)]{LVWY} for $\varepsilon=1$, and to \cite[Theorem~1.1(2)]{LVWYhyp} for $\varepsilon=-1$, in the classification of all hypersurfaces with constant sectional curvature of 
$\tM^2(\varepsilon)\times\tM^{2}(\varepsilon)$.  
The case of hypersurfaces with constant sectional curvature and $\rk S =2$ for which  $S\cR\neq  \cR S$ correspond to \cite[Theorem~1.1(2)]{LVWY} for $\varepsilon=1$ and to \cite[Theorem~1.1(3)]{LVWYhyp} for $\varepsilon=-1$. More precisely, it was shown in \cite{LVWY} that any hypersurface
$f\colon M^3(1/2)\to\mathbb{S}^2\times \mathbb{S}^2$ with $\rk S = 2$ for which $S\cR\neq \cR S$ is a parallel hypersurface at a distance $\tfrac{\pi}{2\sqrt{2}}$ of a minimal hypersurface of $\mathbb{S}^2\times \mathbb{S}^2$ with vanishing product angle function, and such minimal hypersurfaces were described in Theorem 3.5 of that paper. On the other hand, hypersurfaces
$f\colon M^3(-1/2)\to\mathbb{H}^2\times \mathbb{H}^2$ with $\rk S = 2$ for which $S\cR\neq \cR S$ were described in \cite[Theorems 5.7--5.10]{LVWYhyp}. In either case, $f$ has a vanishing product angle function.
\end{remark}

\section{The classification}\label{ss:classification}

We are now in a position to state and prove the following more precise version of the classification of all hypersurfaces with constant sectional curvature and dimension $n\geq 3$ of a product of two space forms.

\begin{theorem}\label{mainthmb}  
Let $f\colon M^n\to \tM^{n+1}$, $n\geq 3$, be a {\rm(}smooth{\rm)} isometric immersion of a space of constant sectional curvature $M^n = M^n(\kappa)$ in a product $\tM^{n+1}=\tM^{k}(\kappa_1)\times \tM^{n-k+1}(\kappa_2)$ of two space forms.

\begin{enumerate}[label={\rm{(\roman*)}}]
\item\label{it:thm11} If $\tM^{n+1}=\mathbb{R}\times \tM^{n}(\kappa_2)$, $\kappa_2\neq 0$, then $f$ is as in \cite[Theorems~7.1 and 7.2]{MaT}{\rm;}
\item\label{it:thm12} If $\tM^{n+1}=\mathbb{R}^k\times \tM^{n-k+1}(\kappa_2)$, $2\leq k\leq n-1$, $\kappa_2\neq 0$,  then $f$ is as in  {\cite[Theorems~11 and 12]{CT}}{\rm;}
 \item If $\kappa_1\kappa_2\neq 0$, then $\kappa = \frac{\kappa_1\kappa_2}{\kappa_1+\kappa_2}$ and one of the following holds{\rm:}
        \begin{enumerate}[label={\rm{(\roman{enumi}.\alph*)}}]
            \item\label{it:thm13} Either  $\kappa_1\kappa_2<0$, with $\kappa_1+\kappa_2<0$, or $\kappa_1,\kappa_2<0$, and there exists an open subset $\mathcal{U}\subset \tM^n(\kappa)$ and an isometry $\phi\colon \mathcal{U}\to M^n$ such that $f\circ \phi$ is the restriction to $\mathcal{U}$ of an isometric embedding of $\tM^{n}(\kappa)$ into $ \tM^{n+1} \subset \tM^{n+2,1}(\kappa)$ as in Examples~\ref{ex:int1} or \ref{ex:int2}, respectively{\rm;}
            \item\label{it:thm14} $n=3$, $k=2$,   $\kappa_1\kappa_2>0$, and  $f$ is constructed,  by means of the rescailing procedure in Proposition \ref{prop:scale}, from the examples in  \cites{LVWY,LVWYhyp} with  $\rk S=2$ for the case $\kappa_1=\kappa_2=\pm1$.
        \end{enumerate}
\end{enumerate}
\end{theorem}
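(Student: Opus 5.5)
Parts \ref{it:thm11} and \ref{it:thm12} are quoted from \cite{MaT} and \cite{CT}, so the work is in the case $\kappa_1\kappa_2\ne0$. I would first dispose of $k=1$ and $k=n$. There one factor is one-dimensional; by symmetry say $k=1$, so that the factor is $\tM^1(\kappa_1)$, which is flat. That falls under the flat-factor case, so I will assume $2\le k\le n-1$. I would also work on the open dense set where the multiplicities of the eigenvalues of $S$ and $\cR$ are locally constant, and extend by continuity at the end.

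\textbf{Step 1: locally decomposable points.} On the open set where $\zeta=0$, $f$ is locally $f_1\times\id$ or $\id\times f_2$ by Proposition~\ref{splits}. Then $M^n$ is locally a product containing a factor of curvature $\kappa_1$ or $\kappa_2$ of dimension at least one, together with a complementary factor. A product of positive-dimensional factors has constant curvature $\kappa$ only if it is flat, but the space-form factor has curvature $\kappa_i\ne0$. Lemma~\ref{lem:rks01} gives the same conclusion directly, and this is a contradiction. Hence $\zeta\ne0$ on an open dense set. There $\cR$ has eigenvalues $0,1,r$ with multiplicities $k-1$, $n-k$, $1$, and $r=1-t\in(0,1)$.

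\textbf{Step 2: algebraic step.} I would compose with the embedding $h$ into $\br^{n+3,\eta}$, or better into $\tM^{n+2,1}(\tilde\kappa)$ with $\tilde\kappa^{-1}=\kappa_1^{-1}+\kappa_2^{-1}$, and view $M^n(\kappa)$ as sitting inside $\tM^{n+2,1}(\kappa)$. Suppose $\kappa=\tilde\kappa$. By \eqref{eq:nu} and \eqref{eq:nu'}, the second fundamental form of $M^n$ in $\tM^{n+2,1}(\tilde\kappa)$ takes values in the normal plane spanned by $\xi$ and the combination of $\nu_1,\nu_2$ orthogonal to the position vector $U$. That plane has signature $(1,1)$, and the Gauss equation says this form is flat.

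So I would set
\[
\omega=\<S\cdot,\cdot\>\xi+\<B\cdot,\cdot\>\nu, \qquad B=a'(I-\cR)+b'\cR ,
\]
with $a',b'$ fixed constants and $\nu$ the normal direction just described. This $\omega$ is flat and Lorentzian with $\dim U=2<n$. If $\mathcal{I}(\omega)$ were all of $U$, Lemma~\ref{lem:flatbi}(\ref{it:symim}) would give $\dim\mathcal N(\omega)\ge n-2$. But $B$ is invertible apart from the $r$-direction, so this is impossible for $n\ge3$. Hence $\mathcal{I}(\omega)$ is a degenerate line spanned by a light-like vector $w$. This forces $S=aI+b\cR$ with $a,b$ as in \eqref{Sab}, and it forces $\kappa_1+\kappa_2<0$ for the light-like direction to be real. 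Differentiating, $\tilde\nabla w$ is proportional to $w$ by Codazzi, so $w$ has constant direction, and the image lies in $w^\perp$. This yields \ref{it:thm13}, matched against Examples~\ref{ex:int1} and~\ref{ex:int2}, or equally through the doubly rotational description in Subsection~\ref{ss:alternative}.

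\textbf{Step 3: the main obstacle.} The hard part is showing that $\kappa=\tilde\kappa$ necessarily, and identifying the exceptional case. My plan is to evaluate the Gauss equation \eqref{Gauss} on the eigenbasis of $\cR$, writing $X_1\in\ker\cR$, $X_2\in\ker(I-\cR)$ and $\zeta$. This gives
\[
\<R(X_1,X_2)X_2,X_1\>=\<SX_1,X_1\>\<SX_2,X_2\>-\<SX_1,X_2\>^2=\kappa ,
\]
together with analogous identities involving $\kappa_1$, $\kappa_2$ and $r$ on pairs containing $\zeta$.

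When $k-1\ge2$ or $n-k\ge2$, the sectional curvature on pairs inside $\ker\cR$ shows that $S|_{\ker\cR}$ has the Gauss form $\kappa-\kappa_1$ there, so it is essentially umbilic. The same argument applies inside $\ker(I-\cR)$ with $\kappa-\kappa_2$. Combining this with the Codazzi equation \eqref{codazzi} and \eqref{derS2}, I expect to show that $S$ commutes with $\cR$. Then Lemma~\ref{lem:grad} and the class-$\mathcal A$ parametrisation \eqref{param2} reduce everything to ODEs in $s$. Consistency of those ODEs should force $\kappa=\kappa_1\kappa_2/(\kappa_1+\kappa_2)$, and then Step~2 applies.

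The only remaining configuration is $n=3$, $k=2$, where each eigenspace is one-dimensional and the umbilicity argument gives nothing. There I would use Codazzi on the $3\times3$ frame to show that $r$ is constant, equal to $\kappa_1/(\kappa_1+\kappa_2)$, and that $\rk S=2$ whenever the light-like case fails. I would then invert the rescaling of Proposition~\ref{prop:scale}, which turns such an $f$ into a hypersurface of $\tM^2(\varepsilon)\times\tM^2(\varepsilon)$ of curvature $\varepsilon/2$ with $C=0$. Those hypersurfaces are classified in \cites{LVWY,LVWYhyp}, and this gives \ref{it:thm14} together with the sign condition $\kappa_1\kappa_2>0$. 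The case analysis of Codazzi in this three-dimensional non-commuting situation is where I expect the most difficulty.
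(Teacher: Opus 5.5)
\textbf{Main gap: Step~3 never proves $\kappa=\tilde\kappa$.} Your Step~1 and your treatment of $\kappa=\tilde\kappa$ for $n\ge4$ essentially agree with the paper's Case~1. There, degeneracy of $\mathcal I(\secf_{\tilde f})$ gives $S=aI+b\cR$, then a light-like normal field whose derivative is proportional to itself, then a constant $w$ with the image in $w^\perp$. The gap is the step you flag yourself: nothing in Step~3 proves $\kappa=\frac{\kappa_1\kappa_2}{\kappa_1+\kappa_2}$.

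The chain ``Gauss on the eigenspaces of $\cR$, then Codazzi gives $S\cR=\cR S$, then ODEs in the class-$\mathcal A$ parametrisation force $\kappa=\tilde\kappa$'' is only an expectation. Its key link, the commutation, is asserted rather than derived. Your ``essentially umbilic'' claim also fails on a $2$-dimensional eigenspace, where the Gauss identities for pairs inside it only fix the determinant of $S$.

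The missing idea is that this step is pointwise linear algebra. Suppose $\kappa\neq\tilde\kappa$ and compose $\tilde f$ with an umbilical inclusion $j\colon\tM^{n+2,\rho}(\tilde\kappa)\to\tM^{n+3,\delta}(\kappa)$. Then $\secf_{j\circ\tilde f}$ is a flat form in codimension~$3$, with trivial nullity because of the umbilical component $S_\nu=\sqrt{|\tilde\kappa-\kappa|}\,I$.
\begin{itemize}
\item Lemma~\ref{lem:flatbi} excludes the definite case for $n\ge4$, and forces $\mathcal I$ to be degenerate in the indefinite cases.
\item A null vector in $\mathcal I\cap\mathcal I^\perp$ is not orthogonal to $\nu$, since otherwise $\kappa=\tilde\kappa$. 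It therefore yields an orthonormal normal frame $N_1,N_2$ of $\tilde f$ with $S_{N_1}$ umbilical and $\rk S_{N_2}\le1$.
\item Hence $aI+b\cR$ is scalar on the $(n-1)$-dimensional space $\ker S_{N_2}$. So $\cR$ has only the eigenvalues $0$ and $1$, i.e.\ $\zeta=0$, contradicting your Step~1.
\end{itemize}
No Codazzi equation or ODE enters.

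\textbf{The case $n=3$.} Step~2 says the nondegenerate case is impossible ``for $n\ge3$''. But for $n=3$ the bound $\dim\mathcal N\ge n-2=1$ is compatible with $\ker(aI+b\cR)\subset\Span(\zeta)$. The nondegenerate case is excluded only when $n\ge4$ or $\rk S=3$, and for $n=3$, $\rk S=2$ it is exactly where (iii.b) lives.

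There too the paper argues algebraically, not by a Codazzi case analysis. Once $\kappa=\tilde\kappa$, either the nullity $\ker S\cap\ker S_V$ is nontrivial, or $S=S_V$ in the degenerate case. Since $\ker S$ is a line and $S_V$ is invertible on $\ker\cR\oplus\ker(I-\cR)$, one gets $\ker S=\ker S_V=\Span(E_3)$ and $r=-a/b=\frac{\kappa_1}{\kappa_1+\kappa_2}$. This lies in $(0,1)$ only if $\kappa_1\kappa_2>0$. Moreover, the $E_3$-curves are relative nullity leaves, which is what the inverse rescaling needs.

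For $n=3$ your plan also supplies no proof of $\kappa=\tilde\kappa$. The paper gets it from \cite[Lemma~6(a)]{KNT} when $\rk S=3$, and from an extra diagonalisation argument in the definite case when $\rk S=2$.

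Finally, for $n=3$ and $\kappa_1,\kappa_2<0$, both (iii.a) and (iii.b) can occur. So ``extend by continuity'' does not by itself stop $f$ from changing type across the boundary of an open set. The paper needs a separate argument comparing principal curvatures there.
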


\proof
We use the notation of Section~\ref{sec:hypprod}.
We first dispose of the case $\rk S\le 1$, which is ruled out by Lemma~\ref{lem:rks01}.
The cases $\kappa_1\kappa_2=0$ are already covered in \cite{MaT} and \cite{CT}, as stated in 
items~\ref{it:thm11} and~\ref{it:thm12}. 

Henceforth, we assume that $\kappa_1\kappa_2\ne0$ and $2\le k\le n-1$. 

 Let $\xi$ be a unit normal vector field along $f$, and let
    $
    h\colon \tM^{k}(\kappa_1)\times\tM^{n-k+1}(\kappa_2) \to \br^{k+1,\sigma(\kappa_1)}\times \br^{n-k+2,\sigma(\kappa_2)} = \br^{n+3,\eta}
    $
    be the inclusion, where $\eta = \sigma(\kappa_1)+\sigma(\kappa_2)$. Then
    \[
    h(\tM^{k}(\kappa_1)\times\tM^{n-k+1}(\kappa_2)) \subset \tM^{n+2,\rho}(\tilde{\kappa}),\quad \tilde{\kappa} = \frac{\kappa_1\kappa_2}{\kappa_1+\kappa_2},
    \]
    where
   $$
 \rho=\eta - \sigma(\tilde{\kappa})=\left\{ \begin{array}{l} 0, \,\,\,\mbox{if}\,\,\, \kappa_1, \kappa_2>0,\,\,\,\mbox{or} \,\,\, \kappa_1\kappa_2<0\,\,\,\mbox{and}\,\,\,
 \kappa_1+\kappa_2>0,\\
 1,\,\,\,\mbox{if}\,\,\, \kappa_1, \kappa_2<0,\,\,\,\mbox{or} \,\,\,\kappa_1 \kappa_2<0\,\,\,\mbox{and}\,\,\, \kappa_1+\kappa_2<0.\\
 \end{array}\right.
 $$
 Denote $i\colon\tM^{k}(\kappa_1)\times\tM^{n-k+1}(\kappa_2) \to \tM^{n+2,\rho}(\tilde{\kappa})\subset \br^{n+3,\eta}$ the inclusion, and let $\tilde{f} = i\circ f$. Define 
 \begin{equation}\label{eq:UV}
     U =-|\tilde \kappa|^{1/2}(R_1\nu_1+R_2\nu_2)\,\,\,\,\mbox{and}\,\,\,\,V=|\tilde \kappa|^{1/2}(-\varepsilon_1R_2\nu_1+\varepsilon_2R_1\nu_2).
 \end{equation}
It follows from \eqref{eq:nu} and \eqref{eq:nu'} that 
$$S_U=|\tilde \kappa|^{1/2}\,I\,\,\,\,\mbox{and}\,\,\,\,S_V=aI+b\cR,$$
  with
 \begin{equation}\label{eq:ab} a=|\tilde\kappa|^{1/2}\frac{R_2}{\varepsilon_1R_1}=\frac{\kappa_1}{|\kappa_1+\kappa_2|^{1/2}}\,\,\,\,\mbox{and}\,\,\,\,b=-|\tilde\kappa|^{1/2}\frac{\varepsilon_2R_1^2+\varepsilon_1R_2^2}{R_1R_2}=
 -\frac{\kappa_1+\kappa_2}{|\kappa_1+\kappa_2|^{1/2}}.
 \end{equation}
 Thus, $U$ is normal to $\tM^{n+2,\rho}(\tilde\kappa)$ along $\tilde{F}=h\circ f$, and $\{i_*\xi,V\}$ is an orthonormal frame of the normal space $N_{\tilde{f}}M^n$.
 
We start with the following fact.

\begin{lemma}\label{lem:setcons}
    In the assumptions of Theorem~\ref{mainthmb}, with $\kappa_1\kappa_2\ne0$ and $2\le k\le n-1$, the following holds.
    \begin{enumerate}[label={\rm{(\alph*)}}]
        \item\label{it:setcons1} If $n\ge 4$ or $n=\rk S=3$, then  $\kappa_1 \kappa_2<0$, with $\kappa_1+\kappa_2<0$, or $\kappa_1, \kappa_2<0$. 
        \item\label{it:setcons2} If $n=3$ and $\rk S=2$, then $\kappa_1\kappa_2>0$.
    \end{enumerate}
    Moreover, in both cases, $\kappa=\frac{\kappa_1\kappa_2}{\kappa_1+\kappa_2}$.
\end{lemma}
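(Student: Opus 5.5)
We plan to work with the composition $\tilde f=i\circ f\colon M^n(\kappa)\to\tM^{n+2,\rho}(\tilde\kappa)$. This is a codimension-two isometric immersion into a pseudo-Riemannian space form. Its normal bundle has the frame $\{i_*\xi,V\}$, with shape operators $S$ and $S_V=aI+b\cR$, where $a,b$ are as in \eqref{eq:ab}.

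From \eqref{eq:UV}, $\<V,V\>=\epsilon_V$, where $\epsilon_V$ has the sign of $\kappa_1+\kappa_2$, up to the factor $\varepsilon_1\varepsilon_2$ (we will check this sign carefully). The Gauss equation of $\tilde f$ then reads
\[
R(X,Y)=\tilde\kappa\, X\wedge Y+SX\wedge SY+\epsilon_V\, S_VX\wedge S_VY .
\]
Since $R=\kappa\,X\wedge Y$, the bilinear form
\[
\omega(X,Y)=\bigl(\<SX,Y\>,\ \<S_VX,Y\>,\ |\tilde\kappa-\kappa|^{1/2}\<X,Y\>\bigr),
\]
with values in a $3$-dimensional space $U$ of inner product $\mathrm{diag}\bigl(1,\epsilon_V,\operatorname{sgn}(\kappa-\tilde\kappa)\bigr)$, is flat. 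The third coordinate is dropped if $\kappa=\tilde\kappa$. The key observation is that $\mathcal N(\omega)=0$ whenever $\kappa\ne\tilde\kappa$, because of the $\<X,Y\>$ component.

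\textbf{Case $n\ge4$.} Here $\dim U\le3<n$. If $\kappa\ne\tilde\kappa$ and $U$ is positive definite, Lemma~\ref{lem:flatbi}\eqref{it:pode} gives $\dim\mathcal N(\omega)\ge n-3\ge1$, which is a contradiction. If $U$ is Lorentzian, we want to apply Lemma~\ref{lem:flatbi}\eqref{it:symim}, which requires $\mathcal I(\omega)=U$. If instead $\mathcal I(\omega)$ is a proper subspace, it is either degenerate or nondegenerate of lower dimension, and we reduce to Lemma~\ref{lem:flatbi}\eqref{it:pode} on a smaller space, or to the standard degenerate-image argument. This gives the equality $\kappa=\tilde\kappa$.

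With $\kappa=\tilde\kappa$, $\omega=(S,S_V)$ takes values in a $2$-dimensional space. If that space is definite (i.e.\ $\epsilon_V=1$), we get $\dim(\ker S\cap\ker S_V)\ge n-2$. But $\ker(aI+b\cR)$ is contained in a single eigenspace of $\cR$, of dimension at most $\max(k,n-k+1)\le n-1$, since $a\neq0$ and $b\neq0$ generically. Combining this with $\rk S\ge2$ from Lemma~\ref{lem:rks01}, and checking the borderline subcases by hand, should yield a contradiction. Hence $\epsilon_V=-1$, which by the sign computation is exactly the situation $\rho=1$, i.e.\ $\kappa_1\kappa_2<0$ with $\kappa_1+\kappa_2<0$, or $\kappa_1,\kappa_2<0$.

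\textbf{Case $n=3$.} Here Lemma~\ref{lem:flatbi} is not available for dimension reasons, so we argue directly with the Gauss equation \eqref{Gauss} in the frame adapted to $\cR$: $X_1\in\ker\cR$, $X_2\in\ker(I-\cR)$, and $\zeta$. First, $\zeta\ne0$ on an open dense set; otherwise $\zeta\equiv0$ and $f$ is locally decomposable, which is incompatible with constant curvature when $\kappa_1\kappa_2\ne0$.

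If $\rk S=2$, take $Z\in\ker S$ and evaluate $\<R(X,Z)Z,X\>=\kappa\|X\|^2$ using \eqref{Gauss}. The resulting sectional curvature conditions involve only $\kappa_1$, $\kappa_2$ and $r$. From them we aim to obtain $\kappa=\kappa_1(1-r)=\kappa_2 r$, which forces $r=\kappa_1/(\kappa_1+\kappa_2)\in(0,1)$, hence $\kappa_1\kappa_2>0$ and $\kappa=\kappa_1\kappa_2/(\kappa_1+\kappa_2)$.

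If $\rk S=3$, we write $\omega$ as above. The algebraic Gauss system is then solved pointwise, showing that it is consistent only when $\kappa=\tilde\kappa$ and $\epsilon_V=-1$, mirroring the $n\ge4$ conclusion.

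\textbf{Main obstacle.} The hardest step is the Lorentzian subcase for $n\ge4$, where the hypothesis $\mathcal I(\omega)=U$ of Lemma~\ref{lem:flatbi}\eqref{it:symim} must be verified or circumvented. We would first try to show directly that a degenerate image would force a light-like normal direction with vanishing shape operator. That conclusion would itself already land us in the situation of Examples~\ref{ex:int1} and~\ref{ex:int2}, and again give $\kappa=\tilde\kappa$.
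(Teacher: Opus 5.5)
\textbf{Part (a).} Your $\omega$ is exactly the paper's $\secf_F$, where $F$ is $\tilde f$ followed by an umbilical inclusion $\tM^{n+2,\rho}(\tilde\kappa)\subset\tM^{n+3,\delta}(\kappa)$. However, the decisive case is left open.

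First, two small slips. The third slot carries the sign $\operatorname{sgn}(\tilde\kappa-\kappa)$. Also $\<V,V\>=\operatorname{sgn}(\kappa_1+\kappa_2)$ exactly, so $U$ can have index $2$; then negate the inner product.

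The open case is a degenerate $\mathcal I(\omega)$. A light-like $N\in\mathcal I(\omega)\cap\mathcal I(\omega)^\perp$ exists automatically; what matters is its third (umbilical) component.
\begin{itemize}
\item If that component is zero, $N$ is a light-like normal of $\tilde f$ with vanishing shape operator, and Gauss gives $\kappa=\tilde\kappa$. This is the only sub-case your last paragraph covers.
\item If it is nonzero, you get an orthonormal normal frame $N_1,N_2$ of $\tilde f$ with $S_{N_1}=\pm|\tilde\kappa-\kappa|^{1/2}I$ and, by Gauss, $\rk S_{N_2}\le1$. This is not the situation of Examples~\ref{ex:int1} and~\ref{ex:int2}. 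The paper excludes it as follows: $S_V=aI+b\cR$ is scalar on the $(\ge n-1)$-dimensional $\ker S_{N_2}$, so $\cR$ has only the eigenvalues $0,1$. Hence $\zeta=0$, and $f$ is locally decomposable by Proposition~\ref{splits}, which is incompatible with constant curvature.
\end{itemize}

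In the step $\kappa=\tilde\kappa$, $\rho=0$, your bound $\dim\ker(aI+b\cR)\le n-1$ does not contradict $\dim\mathcal N\ge n-2$. The needed fact is that $a$ and $a+b$ never vanish. Equivalently $-a/b=\kappa_1/(\kappa_1+\kappa_2)\notin\{0,1\}$, so $\ker S_V$ lies in the $r$-eigenspace and has dimension $\le1<n-2$.

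For $n=\rk S=3$ you give no argument. The paper does not prove that case here either; it imports it from \cite[Lemma~6(a)]{KNT}.

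\textbf{Part (b).} The missing step is $\ker S=\Span(\zeta)$. Without it, the conditions $\<R(X,Z)Z,X\>=\kappa\|X\|^2$ involve the components of $Z$, not only $\kappa_1,\kappa_2,r$, and they do not suffice. Take an $\cR$-eigenframe ($\cR e_1=0$, $\cR e_2=e_2$, $\cR e_3=re_3$) at a point with $\zeta\ne0$. Then any $Z=z_1e_1+z_2e_2$ with $\kappa_1(1-r)z_1^2+\kappa_2rz_2^2=0$ gives $R(X,Z)Z=0$ for all $X$ by \eqref{Gauss}. This is consistent with $\kappa=0$ and $\kappa_1\kappa_2<0$.

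The paper obtains $\ker S=\Span(\zeta)$ in two steps. It first proves $\kappa=\tilde\kappa$, via $F$ and \cite[Theorem~5.2]{DT}. It then uses the nullity $\ker S\cap\ker S_V$ of the flat form $\secf_{\tilde f}$.

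Your direct idea can be repaired, and it then gives a shorter proof, if you use the full identity $R(X,Z)=\kappa\,X\wedge Z$ with $SZ=0$. For $Z=\sum z_ie_i$, \eqref{Gauss} gives
\[
R(e_1,Z)=\kappa_1(1-r)z_3\,e_1\wedge e_3,\quad R(e_2,Z)=\kappa_2rz_3\,e_2\wedge e_3,\quad R(e_3,Z)=\kappa_1(1-r)z_1\,e_3\wedge e_1+\kappa_2rz_2\,e_3\wedge e_2 .
\]
Comparing with $\kappa\,e_i\wedge Z$ yields $\kappa z_1=\kappa z_2=0$ and $\kappa z_3=\kappa_1(1-r)z_3=\kappa_2rz_3$. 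If $\kappa=0$, the first and third identities would force $Z=0$. Hence $Z\parallel e_3$ and $\kappa=\kappa_1(1-r)=\kappa_2r$, at any point where $\zeta\ne0$ (which you correctly secure). This gives $\kappa_1\kappa_2>0$ and $\kappa=\tilde\kappa$ at once, with no need for the umbilical extension or the diagonalisation result.
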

\begin{proof}
   For assertion~\ref{it:setcons1}, we first suppose that $n\ge 4$ and $\kappa\ne\tilde\kappa$. Let $j\colon\tM^{n+2,\rho}(\tilde\kappa)\to \tM^{n+3,\delta}(\kappa)$ be an umbilical isometric immersion, where  $\delta=\rho$ if $\tilde\kappa > \kappa$ and  $\delta=\rho+1$ if $\tilde\kappa < \kappa$. Let $F=j\circ \tilde f$. Then the second fundamental form $\secf_{F}(x)$ of $F$ at any $x\in M^n$ is flat with respect to the inner product in the normal space $N_{F}M^n(x)$, which has index
 $$
\left\{ \begin{array}{l} 0, \,\,\,\mbox{if either}\,\,\, \kappa_1, \kappa_2>0,\,\,\mbox{or}
 \,\,\, \kappa_1\kappa_2<0,\,\kappa_1+\kappa_2>0\,\,\,\mbox{and}\,\,\,\tilde\kappa > \kappa,\\
 2,  \,\,\,\mbox{if either}\,\,\,\kappa_1, \kappa_2<0\,\,\mbox{and}\,\, \tilde\kappa < \kappa,\,\,\mbox{or}
 \,\,\, \kappa_1\kappa_2<0,\, \kappa_1+\kappa_2<0\,\,\,\mbox{and} \,\,\tilde\kappa < \kappa,\\
 1, \,\,\,\mbox{in the remaining cases}.
 \end{array}\right.$$
  Notice that the inner product in the (one-dimensional) normal space to $j$ at $x$ is positive definite if $\tilde\kappa > \kappa$ and negative definite otherwise. Let $\nu$ be one of the two normal vectors to $j$ at $x$ such that $\<\nu, \nu\>=(\tilde\kappa - \kappa)/|\tilde\kappa - \kappa|$ and 
  $S_\nu= \sqrt{|\tilde\kappa - \kappa|}\,I$. In particular, the latter equation implies that $\secf_F(x)$ has a trivial kernel.    
  
 The case in which the inner product on $N_F M^n(x)$ is positive definite is then ruled out by Lemma~\ref{lem:flatbi}\eqref{it:pode},
 which implies that $0=\dim \mathcal{N}(\secf_{F}(x))\geq n-3$, in contradiction with the assumption that $n\geq 4$. 
 Multiplying by $-1$ the inner product on $N_FM^n(x)$ if it has index~$2$, it becomes a Lorentzian inner product. 
  Therefore, in the cases in which the inner product on $N_FM^n(x)$ has index either $1$ or $2$, Lemma~\ref{lem:flatbi}\eqref{it:symim} implies that the subspace $\mathcal{I}(\secf_F(x))$ spanned by the image of $\secf_F(x)$ must be degenerate for every $x\in M^n$, for otherwise we would reach the same contradiction as in the positive definite case. 

  Thus, there exists a nonzero vector $N\in \mathcal{I}(\secf_F(x))\cap (\mathcal{I}(\secf_F(x)))^\perp$. Observe that $N$ cannot be orthogonal to $\nu$, as otherwise $\mathcal{I}(\secf_{\tilde f}(x))$ would be a degenerate subspace of a Lorentzian plane, and hence would be spanned by a light-like vector. This would imply that $\kappa =\tilde\kappa$ from Gauss equation of $\tilde f$, a contradiction. Consequently, we may assume that $N=i_*N_1+\nu$ for some $N_1 \in N_{\tilde{f}}M^n(x)$ with $\<N_1,N_1\> = \varepsilon = -(\tilde\kappa - \kappa)/|\tilde\kappa - \kappa|$. Since
    $$0=\<\secf_{F}(X, Y), N\>=\<i_*\secf_{\tilde f}(X, Y)+ \sqrt{|\tilde\kappa-\kappa|}\<X, Y\>\nu, i_*N_1+\nu\>,$$
    it follows that $S_{N_1} = \varepsilon\sqrt{|\tilde\kappa - \kappa|}I$.
    Choosing $N_2 \in N_{\tilde{f}}M^n(x)$ orthogonal to $N_1$ with $\<N_2,N_2\> = -\varepsilon$, Gauss equation of $\tilde{f}$ implies that $\rk S_{N_2} \le 1$. Therefore, $\{N_1,N_2\}$ is an orthonormal basis of $N_{\tilde{f}}M^n(x)$ such that
     $$S_{N_1}=\varepsilon \sqrt{|\tilde\kappa - \kappa|}\,I\,\,\,\mbox{and}\,\,\rk S_{N_2}\le 1.$$
    Now let $\theta \in C^\infty(M^n)$ be such that 
     $$i_*\xi=\cos \theta N_1+\sin \theta N_2 \,\,\,\,\mbox{and}\,\,\,\,V=-\sin\theta N_1+\cos\theta N_2.$$ 
     Then, on $\ker S_{N_2}$ we have
      $$S=\varepsilon\sqrt{|\tilde\kappa - \kappa|}\cos\theta\,I\,\,\, \mbox{and}\,\,\, aI+b\cR=-\varepsilon\sqrt{|\tilde\kappa - \kappa|}\sin\theta\,I.$$
      Since $\dim\ker S_{N_2} \ge n-1$, it follows from the second of the above equations that $\cR$ can have at most two distinct eigenvalues. As $\dim\ker\cR \ge k-1 \ge 1$ and $\dim\ker(I-\cR)\ge n-k \ge 1$ by \eqref{dimkerR} and our assumption that $2\leq k\leq n-1$, such eigenvalues must be $0$ and $1$, with multiplicities  $(n-1, 1)$ or $(1, n-1)$. It follows from \eqref{dimkerR} that $k=n-1$ in the former case and $k=2$ in the latter. Moreover, $f$ is locally decomposable by Proposition~\ref{splits}; in particular, $\kappa=0$. But since both factors have dimensions greater than one and one of them is an inclusion,
     this leads to a contradiction.

     Now we have $\kappa=\tilde\kappa=\frac{\kappa_1\kappa_2}{\kappa_1+\kappa_2}$. Then, at any $x\in M^n$, the second fundamental form $\secf_{\tilde{f}}(x)$ is a flat bilinear form with respect to the inner product on $N_{\tilde{f}}M^n(x)$, whose index is equal to $\rho$. Therefore, if $\rho=0$, then $\dim\mathcal{N}(\secf_{\tilde f}(x)) \ge n-2$ by Lemma~\ref{lem:flatbi}\eqref{it:pode}.
     On the subspace $\mathcal{N}(\secf_{\tilde f})$ at $x\in M^n$, we have $S=0=aI+b\cR$. Since $-ab^{-1}=\kappa_1(\kappa_1+ \kappa_2)^{-1}$, by the assumption that $n\geq 4$ this contradicts the fact that the eigenvalues of $\cR$ lie in $[0,1]$ and that the unique eigenvalue of $\cR$ in $(0,1)$ has multiplicity one.
     Thus, we must have $\rho=1$, hence  $\kappa_1 \kappa_2<0$, with $\kappa_1+\kappa_2<0$, or $\kappa_1, \kappa_2<0$.

     Next, when $n=\rk S=3$, the claim follows from \cite[Lemma~6(a)]{KNT}.

     We now prove assertion~\ref{it:setcons2}. Suppose that $n=3$ and $\rk S=2$. Arguing by contradiction, assume $\kappa\ne\tilde\kappa$. Then the same argument as in the case $n\ge 4$ applies, except that we must also consider the case in which the inner product on $N_{F}M^3(x)$ at a point $x\in M^3$ is positive definite. But then, since $\secf_{F}(x)$ has trivial kernel, $\mathcal{I}(\secf_{F}(x))=N_{F}M^3(x)$. It follows from \cite[Theorem~5.2]{DT} that there exists a frame $\{X_1,X_2,X_3\}$ that diagonalises $\secf_{F}$. As $S_{\nu}=\sqrt{|\tilde\kappa-\kappa|}I$, such a frame is orthogonal; hence, it may be chosen as an orthonormal frame. In particular, this frame diagonalises $\secf_{\tilde f}$ and so diagonalises $S$ and $aI+b\cR$, thus $S\cR = \cR S$. We take $X_2=\|\zeta\|^{-1}\zeta$, then we obtain
     \begin{gather*}
         SX_1=\lambda_1 X_1,\quad SX_2=\lambda_2 X_2,\quad SX_3=\lambda_3 X_3,\\
         \cR X_1 = 0,\quad \cR X_2 = r X_2, \quad \cR X_3 = X_3,\\
         (I-\cR)X_1 = X_1,\quad (I-\cR)X_2 = (1-r)X_2,\quad (I-\cR)X_3 = 0.
     \end{gather*}
     Gauss equation~\eqref{Gauss} gives $\kappa=\lambda_1\lambda_2 + \kappa_1(1-r)  = \lambda_2\lambda_3+\kappa_2 r = \lambda_1\lambda_3$. This yields $\lambda_1\lambda_3\ne 0$ and so $\lambda_2=0$ as $\rk S =2$. Moreover, we have $\kappa_1(1-r)=\kappa=\kappa_2 r$, which implies $\kappa=\frac{\kappa_1\kappa_2}{\kappa_1+\kappa_2}=\tilde\kappa$, a contradiction. 
     
     Now we have $\kappa=\frac{\kappa_1\kappa_2}{\kappa_1+\kappa_2}$. Then, $\secf_{\tilde{f}}(x)$ is a flat bilinear form for every $x\in M^3$, and hence it has nontrivial kernel by Lemma~\ref{lem:flatbi}\eqref{it:pode}. In particular, $S_{V} = aI + b\cR$ has nontrivial kernel and $r=-ab^{-1} = \kappa_1(\kappa_1+\kappa_2)^{-1}$. Since $\mathcal{N}(\secf_{\tilde f}) = \ker S \cap \ker S_V$, a vector field $Y$ spanning $\mathcal{N}(\secf_{\tilde f})$ satisfies $SY= 0 $ and $\cR Y = rY$. Computing the sectional curvatures of the planes spanned by $\{X,Y\}$ and $\{Y,Z\}$, where $\{X,Y,Z\}$ is an orthonormal frame such that $\cR X = 0$ and $\cR Z= Z$, and using Gauss equation of $F$, we have $\kappa_1(1-r)=\kappa=\kappa_2 r$, which implies $\kappa_1\kappa_2>0$.
\end{proof}

The proof of the theorem is divided into three cases: $n\ge 4$, $n=\rk S=3$, and $n=3$ with $\rk S=2$. 

\vspace{1ex}

\underline{Case 1. $n\ge 4$}. By Lemma~\ref{lem:setcons}\ref{it:setcons1}, we have $\kappa=\frac{\kappa_1\kappa_2}{\kappa_1+\kappa_2}$, with $\kappa_1+\kappa_2<0$.

     If $\mathcal{I}(\secf_{\tilde f}(x))$ was nondegenerate, we would reach the contradiction using Lemma~\ref{lem:flatbi}\eqref{it:symim}. 
     As a result, $\mathcal{I}(\secf_{\tilde f}(x))$ is degenerate for any $x\in M^n$.  This means that, for all $x\in M^n$ and all $X, Y\in T_xM^n$,
 $$0=\<\secf_{\tilde f}(X,Y), i_*\xi+V\>=\<SX, Y\>-\<(aI+b\cR)X, Y\>,$$
 hence  $S=aI+b\cR$ (and in particular $S\cR = \cR S$). 

 Let us first assume that $\kappa_1\kappa_2<0$ and $\kappa_1+\kappa_2<0$, and suppose $\kappa_2<0<\kappa_1$. Then the second fundamental form of $\tilde{F}=h\circ f$ is
  \begin{eqnarray*}
  \secf_{\tilde F}(X,Y)&=&h_*\secf_f(X,Y) -\frac{1}{R_1}\<(I-\cR)X, Y\> \nu_1+\frac{1}{R_2}\<\cR X, Y\>
    \nu_2\\
    &=& \<(aI+b\cR)X, Y\>h_*\xi -\frac{1}{R_1}\<(I-\cR)X, Y\> \nu_1+\frac{1}{R_2}\<\cR X, Y\>\nu_2\\
    &=& \<X, Y\>\left(a h_*\xi-\frac{1}{R_1}\nu_1\right)+\<\cR X,Y\>\left(b h_*\xi+\frac{1}{R_1}\nu_1+\frac{1}{R_2}\nu_2\right),
    \end{eqnarray*}
  where $\kappa_1=R_1^{-2}$ and $\kappa_2=-R_2^{-2}$.
  Thus the first normal space $N^1_{\tilde F}M^n(x)$ of $\tilde{F}$ at any $x\in M^n$ is spanned by the vectors $ah_*\xi-\frac{1}{R_1}\nu_1$ and $bh_*\xi+\frac{1}{R_1}\nu_1+\frac{1}{R_2}\nu_2$, or equivalently, using~\eqref{eq:ab},
   by the vectors 
   $$Z_1=\kappa^{1/2}R_2 h_*\xi -\nu_1\,\,\,\mbox{and}\,\,\,Z_2=\kappa^{-1/2}h_*\xi + R_2\nu_1+R_1\nu_2.$$
   Notice that $Z_2\in (\Span(Z_1, Z_2))^\perp,$ hence $\Span(Z_1, Z_2)$ is a degenerate vector subbundle of rank two. Now we have, for any $X\in\Gamma(TM^n)$,
 \begin{eqnarray*}
 \nabla_X^\perp h_*\xi&=&\secf_h(f_*X,h_*\xi)\\
 &=&-\frac{1}{R_1}\<\tilde \pi_1 \xi,f_*X\> \nu_1+\frac{1}{R_2}\<\tilde \pi_2 \xi,f_*X\>\nu_2\\
 &=&\<X, \zeta\>\left(\frac{1}{R_1}\nu_1+\frac{1}{R_2}\nu_2\right),
 \end{eqnarray*}
 $$\nabla_X^\perp\nu_1=-\frac{1}{R_1}\<X,\zeta\>h_*\xi\,\,\,\,\mbox{and}\,\,\,\,\nabla_X^\perp\nu_2=\frac{1}{R_2}\<X,\zeta\>h_*\xi.$$
 Thus,
  $$
 \begin{array}{l}
 \nabla_X^\perp Z_2=\kappa^{-1/2}\<X, \zeta\>\left(\frac{1}{R_1}\nu_1+\frac{1}{R_2}\nu_2\right)-\frac{R_2}{R_1}\<X,\zeta\>h_*\xi+\frac{R_1}{R_2}\<X,\zeta\>h_*\xi\vspace{2ex}\\
 \hspace*{7ex}= \frac{\kappa^{-1/2}}{R_1R_2}\<X,\zeta\> Z_2.
 \end{array}
 $$
  Observe that, by Lemma~\ref{lem:grad}, the vector field $\zeta$ is a gradient vector field. 
  Taking (locally) a positive $\la\in C^{\infty}(M^n)$ such that 
 $\grad(\log\la)=-\frac{\kappa^{-1/2}}{R_1R_2} \zeta$ implies that $w=\lambda Z_2$ is a constant vector in $\br^{n+3,1}$, for $Z_2\in \Gamma((N_{\tilde F}^{1}M^n)^\perp)$ and
 $$\nabla_X^\perp \lambda Z_2=X(\lambda)Z_2+\lambda \nabla_X^\perp Z_2=\left(\<\grad \lambda, X\>+\frac{ \kappa^{-1/2}}{R_1R_2}\lambda\<X,\zeta\>\right)Z_2=0.$$
 Therefore  $\tilde F(M^n)$ is contained in the intersection of $\tM^{k}(\kappa_1)\times \tM^{n-k+1}(\kappa_2)\subset \br^{n+3,1}$ with a degenerate hyperplane $\mathcal{H}$ orthogonal to $w$. Since $w$ is orthogonal to $U$, which is a multiple of the position vector of $\tilde F$, then $\mathcal{H}$ passes through the origin. This proves the statement~\ref{it:thm13} when $\kappa_1\kappa_2<0$.  

  We now suppose that $\kappa_1, \kappa_2<0$, say, $\kappa_1=-R_1^{-2}$ and $\kappa_2=-R_2^{-2}$.
  Then an argument analogous to that used in the case $\kappa_1\kappa_2<0$ shows that the first normal space $N_{\tilde{F}}^{1}M^n(x)$ at any $x\in M^n$ is spanned by the vectors
  $$Y_1=-|\kappa|^{1/2}R_2h_*\xi+\nu_1\,\,\,\mbox{and}\,\,\,Y_2=|\kappa|^{-1/2}h_*\xi-R_2\nu_1+R_1\nu_2$$ and forms a degenerate vector subbundle of rank two, for $Y_2\in (N_{\tilde{F}}^{1}M^n(x))^\perp$. Using that $\nabla_X^{\perp}Y_2 = \frac{|\kappa|^{-1/2}}{R_1R_2}\<X,\zeta\> Y_2$, one shows as in the previous case that there exists a  positive $\mu\in C^\infty(M^n)$ such that $\nabla_X^\perp \mu Y_2=0$, which, together with the fact that $Y_2\in (N_{\tilde{F}}^{1}M^n(x))^\perp$, implies that $w=\mu Y_2$ is a constant vector  in $\mathbb{R}^{n+3,2}$. It follows that $\tilde{F}(M^n)$ is contained in the intersection of $\tM^{k}(\kappa_1)\times \tM^{n-k+1}(\kappa_2)\subset \br^{n+3,2}$ with a degenerate hyperplane $\mathcal{H}$ orthogonal to $w$. As before, the fact that $w$ is orthogonal to $U$, which is a multiple of the position vector of $\tilde F$, implies that $\mathcal{H}$ passes through the origin, thereby proving that $f(M^n)$ is as stated in~\ref{it:thm13} when $\kappa_1, \kappa_2<0$.

\vspace{1ex}

\underline{Case 2. $n=\rk S=3$}.
We have $\kappa_1+\kappa_2< 0$ and  $\kappa=\frac{\kappa_1\kappa_2}{\kappa_1+\kappa_2}$ by Lemma~\ref{lem:setcons}\ref{it:setcons1}.

At each point $x\in M^3(\kappa)$, the second fundamental form $\secf_{\tilde f}(x)$ is a flat bilinear form with respect to the Lorentzian inner product on $N_{\tilde f}M^3(x)$. Hence, if $\mathcal{I}(\secf_{\tilde f}(x))$ was nondegenerate,  by Lemma~\ref{lem:flatbi}\eqref{it:pode} 
the index of relative nullity of $\tilde f$ at $x$ would be at least~$1$, contradicting our assumption that $\rk S=3$. Therefore, $\mathcal{I}(\secf_{\tilde f}(x))$ must be degenerate for all $x\in M^3(\kappa)$. Consequently,
 $$0=\<\secf_{\tilde f}(X,Y), i_*\xi + V\>=\<SX, Y\>-\<(aI+b\cR)X, Y\>$$
 for all $X,Y\in \Ga(TM^3)$, which shows that $S=aI+b\cR$. 

 Now, in either of the possible cases $\kappa_1\kappa_2<0$ (with $\kappa_2<0<\kappa_1$), or $\kappa_1,\kappa_2<0$, one may argue as in the proof of the case $n\geq 4$ to conclude that the assertion \ref{it:thm13} in the statement holds.

\vspace{1ex}

\underline{Case 3. $n=3$ and $\rk S=2$}.
We have from Lemma~\ref{lem:setcons}\ref{it:setcons2} that $\kappa=\frac{\kappa_1\kappa_2}{\kappa_1+\kappa_2}$, with $\kappa_1\kappa_2>0$.

Let $(E_1, E_2, E_3)$ be an orthonormal diagonalising frame of the tensor $\cR$, with $\cR E_1=0$, $\cR E_2=E_2$,  and $\cR E_3=rE_3$, where $r$  takes values in $(0,1)$. We claim that $r$ has the constant value $r=\frac{\kappa_1}{\kappa_1+\kappa_2}$, and that $\ker S = \Span (E_3)$.

  Let $\iota\colon \tM^2(\kappa_1)\times\tM^2(\kappa_2)\to \tM^5(\kappa)$ be the inclusion, and set $\tilde{f}=\iota\circ f$.  Then, at each point $x\in M^3(\kappa)$, $\secf_{\tilde{f}}(x)$ is a flat bilinear form with respect to the inner product on the normal space $N_{\tilde f}M^3(x)$, which is positive definite if both $\kappa_1$  and $\kappa_2$ are positive, and Lorentzian if both are negative.   In the former case, it follows from    Lemma~\ref{lem:flatbi}\eqref{it:pode} that $\secf_{\tilde{f}}(x)$ has nontrivial kernel.   
  Let $(U, V)$ be the orthonormal frame of $N_{\tilde f}M^3$ given by \eqref{eq:UV}, with $\varepsilon_1=\varepsilon=\varepsilon_2$. Since 
  $\mathcal{N}(\secf_{\tilde{f}}(x))=\ker S\cap \ker S_V\subset \ker S$, and the latter has dimension one, it follows that 
$\mathcal{N}(\secf_{\tilde{f}}(x))=\ker S= \ker S_V$. Recall that $S_V=aI+b\cR$, with $a$ and $b$ given by \eqref{eq:ab}. From $S_VE_1=aE_1$,  $S_VE_2=(a+b)E_2$, and the symmetry of $S_V$, it follows that 
$\ker S_V=\Span(E_3)=\ker S$.
Therefore, 
$0=S_V E_3=(aI+b\cR) E_3$, hence $\cR E_3=-\frac{a}{b}E_3=\frac{\kappa_1}{\kappa_1+\kappa_2}E_3$,
  and the claim is proved in this case. If both $\kappa_1$  and $\kappa_2$ are negative
  and the subspace $\mathcal{I}(\secf_{\tilde f}(x)) $ spanned by the image of $\secf_{\tilde{f}}(x)$ is nondegenerate, the claim follows  as before using  Lemma~\ref{lem:flatbi}\eqref{it:symim}.
  On the other hand,  $\mathcal{I}(\secf_{\tilde f}(x)) $ being degenerate means that $S=S_V=aI+bR$, which can only have nontrivial kernel if $0=S_V E_3=(aI+b\cR)E_3$, hence $SE_3=0$ and $\cR E_3=\frac{\kappa_1}{\kappa_1+\kappa_2}E_3$, thus proving the claim also in this case.

  Notice that the argument in the preceding paragraph has shown that  
  $\mathcal{N}(\secf_{\tilde{f}}(x))=\Span(E_3)$. 
  This implies that the integral curves of $E_3$ are relative nullity leaves of $\tilde{f}$, hence they are geodesics of $M^3$ (this also follows from \eqref{derS2}), and are mapped by $\tilde{f}$ into geodesics of $\tM^5(\kappa)$, in particular into geodesics of $\tM^2(\kappa_1)\times \tM^2(\kappa_2)$.  
  In summary, any hypersurface $f\colon M^3(\kappa)\to \tM^2(\kappa_1)\times \tM^2(\kappa_2)$, with $\rk S=2$ everywhere, satisfies the following:

\begin{itemize}[leftmargin=1.5em]
  \item[\textbullet]
  $\kappa = \frac{\kappa_1 \kappa_2}{\kappa_1 + \kappa_2}$, or equivalently, $ R^2 = R_1^2 + R^2_2$, where $\kappa_1=\varepsilon R_1^{-2}$,  $\kappa_2=\varepsilon R_2^{-2}$ and $\kappa=\varepsilon R^{-2}$.
    \item[\textbullet] \label{it:frame}
  The eigenfunction $r$ of $\cR$ taking values in $(0,1)$ has the constant value $r=\frac{\kappa_1}{\kappa_1+\kappa_2}$. Equivalently, the product angle function $C$ has the constant value $C=\frac{\kappa_1-\kappa_2}{\kappa_1+\kappa_2}$. Therefore, there is an orthonormal frame $(E_1, E_2, E_3)$ on $M^3$ such that $\cR E_1=0$, $\cR E_2=E_2$ and 
  $\cR E_3=\frac{\kappa_1}{\kappa_1+\kappa_2}E_3$. 
  \item[\textbullet]
  The integral curves of $E_3$ are geodesics in $M^3(\kappa)$ and are mapped by $f$ into geodesics of $\tM^2(\kappa_1)\times\tM^{2}(\kappa_2)$.
  \item[\textbullet]
  The components $S_{ij}=\<\nabla^0_{E_i}f_*E_j, \xi\>$ of the shape operator $S$ with respect to a unit normal vector field $\xi$ (where $\nabla^0$ is the flat Levi-Civita connection of $\ip$ on $\br^{3, \varepsilon}_1 \times \br^{3, \varepsilon}_2$) satisfy
  \begin{equation} \label{eq:Aij}
    S_{13} = S_{23} = S_{33} = 0, \qquad  S_{11} S_{22} - S_{12}^2 = \kappa=\varepsilon R^{-2}.
  \end{equation}
\end{itemize}

Here, as in the paragraphs that precede Proposition \ref{prop:scale}, we denote each factor by the corresponding subscript, and consider the product $\tM_1^2(\kappa_1)\times\tM_2^{2}(\kappa_2)$ lying in the product $\br^{3, \varepsilon}_1 \times \br^{3, \varepsilon}_2$ of two mutually orthogonal Euclidean or Lorentzian spaces, according as $\varepsilon=1$ or $\varepsilon=-1$, respectively. For $i =1,2$,  $\ip_i$ denotes the inner product on $\br^{3, \varepsilon}_i$ and $\pi_i\colon \br^{3, \varepsilon}_1 \times \br^{3, \varepsilon}_2 \to \br^{3, \varepsilon}_i$ the corresponding orthogonal projection. As before, we again denote $\ip$ the inner product on $\br^{3, \varepsilon}_1 \times \br^{3, \varepsilon}_2$ defined by $\<X,Y\>= \<\pi_1 X, \pi_1 Y\>_1 + \<\pi_2 X, \pi_2 Y\>_2$, for $X, Y \in \br^{3, \varepsilon}_1 \times \br^{3, \varepsilon}_2$, and now define the new inner products $\widehat{\ip}_i$ on $\br^{3,\varepsilon}_i$ by  $\widehat{\ip}_i = 
R_i^{-2} \ip_i$, for $i =1,2$, and $\widehat{\ip}=\widehat{\ip}_1 + \widehat{\ip}_2$.

Relative to the inner product $\widehat{\ip}$, $\tM_i^2(\kappa_i)$  becomes $\tM_i^2(\varepsilon)$, and so $f$ becomes an immersion $\hat f\colon M^3\to \tM_1^2(\varepsilon)\times\tM_2^2(\varepsilon)\subset (\br^{3, \varepsilon}_1 \times \br^{3, \varepsilon}_2, \widehat{\ip})$. Arguing exactly as in the proof of Proposition~\ref{prop:scale}, one can prove the following proposition, which shows that $f$ is given as in~\ref{it:thm14}.

\begin{proposition} \label{prop:scale2}
  The metric induced by $\hat f$ on $M^3$ from $\widehat{\ip}$  has constant sectional curvature $\frac{\varepsilon}{2}$, and the shape operator of $\hat f$ also has rank $2$.
\end{proposition}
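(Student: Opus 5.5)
The plan mirrors the proof of Proposition~\ref{prop:scale}, with the scaling inverted. Start from the frame $(E_1,E_2,E_3)$ listed above: $\cR E_1=0$, $\cR E_2=E_2$, $\cR E_3=rE_3$ with $r=\kappa_1/(\kappa_1+\kappa_2)=R_2^2/R^2$, so $t=1-r=R_1^2/R^2$. From \eqref{propR}, $\zeta=\sqrt{rt}\,E_3$, and \eqref{R}, \eqref{R2}, \eqref{zeta}, \eqref{zeta2} give
\[
\pi_1^*f_*E_3=t f_*E_3-\sqrt{rt}\,\xi,\qquad \pi_2^*f_*E_3=r f_*E_3+\sqrt{rt}\,\xi .
\]
The projections do not depend on the inner product. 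We set
\[
\hat E_1=R_1E_1,\qquad \hat E_2=R_2E_2,\qquad \hat E_3=\lambda E_3 .
\]
The constant $\lambda$ is fixed by requiring $\widehat{\<\hat E_3,\hat E_3\>}=1$. Since $\|\pi_1^*f_*E_3\|^2=t$ and $\|\pi_2^*f_*E_3\|^2=r$, this reads $\lambda^2(R_1^{-2}t+R_2^{-2}r)=1$, which gives $\lambda^2=R^2/2$, i.e.\ $\hat E_3=R\,E_3/\sqrt2$. The cross terms vanish because $E_1$, $E_2$ project into a single factor. Hence $(\hat E_1,\hat E_2,\hat E_3)$ is $\widehat{\ip}$-orthonormal.

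Next, construct a $\widehat{\ip}$-unit normal $\hat\xi=c_1\pi_1^*\xi+c_2\pi_2^*\xi$. We need $\hat\xi$ to be $\widehat{\ip}$-orthogonal to $\hat f_*E_3$ and of unit length. The analogue of \eqref{N} suggests $c_1\propto R_1^2\sqrt{r}/\sqrt{t}$ and $c_2\propto R_2^2\sqrt t/\sqrt r$, normalised. I will check this by a short two-variable computation.

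The Levi-Civita connections of $\ip$ and $\widehat{\ip}$ coincide with the flat $\nabla^0$. Also, $\nabla^0_{E_i}f_*E_j$ for $i,j\le2$ pairs nontrivially with $\pi_i^*\hat\xi$ only in the relevant factor. Therefore each component $\hat S_{ij}=\widehat{\<\nabla^0_{\hat E_i}\hat f_*\hat E_j,\hat\xi\>}$ is a constant multiple of $S_{ij}$, computed line by line as in Proposition~\ref{prop:scale}. For $\hat S_{i3}$ with $i=1,2,3$ I will not compute directly. Instead, the integral curves of $E_3$ are geodesics of $M^3$ mapped to geodesics of the product, and this property is invariant under homothetic rescaling of the factors. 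The same argument gives $\hat S_{33}=0$. For $\hat S_{13}$ and $\hat S_{23}$ I would argue as in the displayed computation for $S'_{31}$, $S'_{32}$: $\nabla^0_{E_3}f_*E_1$ lies in the first factor, and $\<\nabla^0_{E_3}f_*E_1,\pi_1^*\xi\>$ is proportional to $S_{31}=0$.

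It then follows that $\hat S_{11}\hat S_{22}-\hat S_{12}^2=\mu\,(S_{11}S_{22}-S_{12}^2)=\mu\varepsilon R^{-2}$ for an explicit constant $\mu$, which should be $\mu=R^2/2$. So the rank is $2$ and the Gauss determinant is $\varepsilon/2$.

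Finally, apply the Gauss equation \eqref{Gauss} in $\tM^2(\varepsilon)\times\tM^2(\varepsilon)$. We have $\hat R(\hat E_1,\hat E_2,\hat E_2,\hat E_1)=\varepsilon/2$ from the determinant. Also
\[
\hat R(\hat E_1,\hat E_3,\hat E_3,\hat E_1)=\varepsilon\,\widehat{\<\pi_1^*\hat f_*\hat E_3,\pi_1^*\hat f_*\hat E_3\>}=\varepsilon\,\tfrac{R^2}{2}R_1^{-2}t=\tfrac{\varepsilon}{2},
\]
and similarly the $(2,3)$ term gives $\varepsilon R^2R_2^{-2}r/2=\varepsilon/2$. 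The mixed components vanish since $\hat S$ is block-diagonal with $\hat S\hat E_3=0$ and $\cR$ is diagonal in the frame.

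The main obstacle is pinning down the correct $\hat\xi$ and the exact constants, so that $\mu=R^2/2$ comes out exactly. This hinges on the identity $R^2=R_1^2+R_2^2$ together with $r=R_2^2/R^2$. I would verify this first, checking $\mu$ against the inverse of the factor $2R^{-2}$ obtained in Proposition~\ref{prop:scale}.
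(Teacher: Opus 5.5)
Your plan is the paper's own: the paper just says to argue exactly as in Proposition~\ref{prop:scale}. Your frame $\hat E_1=R_1E_1$, $\hat E_2=R_2E_2$, $\hat E_3=RE_3/\sqrt2$, the vanishing of $\hat S_{13},\hat S_{23},\hat S_{33}$, the factor $\mu=R^2/2$ and the Gauss-equation values $\varepsilon/2$ are all correct.

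The one thing to fix is your tentative normal, which is wrong as written.

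\begin{itemize}
\item Since $\<\pi_1^*\xi,f_*E_3\>=-\sqrt{rt}$ and $\<\pi_2^*\xi,f_*E_3\>=\sqrt{rt}$, you get $\widehat{\<\hat\xi,f_*E_3\>}=\sqrt{rt}\,(c_2R_2^{-2}-c_1R_1^{-2})$. Orthogonality therefore forces $c_1:c_2=R_1^2:R_2^2$.
\item Your guess gives $c_1\propto R_1^2\sqrt{r/t}=R_1R_2$ and $c_2\propto R_2^2\sqrt{t/r}=R_1R_2$. So $c_1=c_2$, i.e.\ $\hat\xi\propto\xi$, which is not $\widehat{\ip}$-normal to $\hat f$ unless $R_1=R_2$.
\item The correct unit normal is
\[
\hat\xi=\frac{R}{\sqrt2}\bigl(R_1R_2^{-1}\pi_1^*\xi+R_2R_1^{-1}\pi_2^*\xi\bigr).
\]
\item With this normal, $\hat S_{11}=\frac{R}{\sqrt2}\frac{R_1}{R_2}S_{11}$, $\hat S_{22}=\frac{R}{\sqrt2}\frac{R_2}{R_1}S_{22}$ and $\hat S_{12}=\frac{R}{\sqrt2}S_{12}$. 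This gives exactly $\mu=R^2/2$, as you predicted.
\end{itemize}

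Your planned two-variable check would have caught this, and the rest of the argument goes through unchanged.
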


\begin{remark}
The preceding discussion applies to any hypersurface $f\colon M^3(\kappa)\to \tM^2(\kappa_1)\times \tM^2(\kappa_2)$ with $\rk S=2$ everywhere. However, we now show that, in particular, if its shape operator $S$ satisfies $S_{12}=0$, then $f$ is given as in Example \ref{ex:3.6}.

We observe that the condition $S_{12}=0$ is equivalent to requiring that $S\cR= \cR S$. 
Since any hypersurface $f\colon M^3(\kappa)\to \tM^2(\kappa_1)\times \tM^2(\kappa_2)$, with $\rk S=2$ everywhere, has a constant product angle function, it follows from \cite[Corollary~5.1]{CT2} that, under the assumption that $S\cR= \cR S$, it is given by \eqref{param}, with $f_1$ and $f_2$ as in \eqref{f1} and \eqref{f2}, respectively, for some $\theta\in \mathbb{R}$. The fact that $M^3$ has constant sectional curvature $\kappa=\frac{\kappa_1\kappa_2}{\kappa_+\kappa_2}$ now implies that
$\theta=\arcsin{\left(\frac{\kappa_1}{\kappa_1+\kappa_2}\right)^{1/2}}$ and that the curvatures $\lambda_1$ and $\lambda_2$ of $\gamma_1$ and $\gamma_2$, respectively, must be constants such that $\lambda_1\lambda_2=(\kappa_1\kappa_2)^{1/2}$, hence it is given as in Example \ref{ex:3.6}.
\end{remark}

\vspace{1ex}
To complete the proof of the theorem, it remains to prove that if a hypersurface $f\colon M^3(\kappa)\to \tM^2(\kappa_1)\times \tM^2(\kappa_2)$, with $\kappa_1, \kappa_2<0$, is as in \ref{it:thm13} in an open subset $\Omega\subset M^3(\kappa)$, then the same must hold on the entire $M^3(\kappa)$.

For a hypersurface $f\colon M^3(\kappa)\to \tM^2(\kappa_1)\times \tM^2(\kappa_2)$, $\kappa_1, \kappa_2<0$, as in~\ref{it:thm13}, the shape operator is given by $S=aI+b\cR$, with $a$ and $b$ as in \eqref{eq:ab}. Therefore, the eigenvalues of $S$ are
$a=\frac{\kappa_1}{[-(\kappa_1+\kappa_2)]^{1/2}}$, $a+b=-\frac{\kappa_2}{[-(\kappa_1+\kappa_2)]^{1/2}}$, and $a+rb$, where $r\in(0,1)$ is an eigenvalue of $\cR$. 
Suppose that the boundary $\partial \Omega$ of $ \Omega$ is nonempty. Then,
at a point $x\in M^3(\kappa)$ in $\partial \Omega$, there must exist a sequence $(x_n)$ in $\Omega$ converging to $x$ such that $a+r(x_n)b$ converges to $0$. Thus, $x$ also belongs to 
to the boundary $\partial \Omega'$ of the subset $\Omega'$ where $\rk S=2$ and $S\cR=\cR S$. On the other hand, at a point $(s, x_1, x_2)\in  \Omega'$, the nonzero principal curvatures are given by \eqref{eq:pc}. For $\theta=\arcsin \left(\frac{\kappa_1}{\kappa_1+\kappa_2}\right)^{1/2}$, we have
$$\frac{\sin\theta}{R_1}= \frac{\kappa_1}{[-(\kappa_1+\kappa_2)]^{1/2}}, \quad \frac{\cos\theta}{R_2}= -\frac{\kappa_2}{[-(\kappa_1+\kappa_2)]^{1/2}},\quad \frac{\cos\theta}{R_1}= \left(- \frac{\kappa_1\kappa_2}{\kappa_1+\kappa_2}\right)^{1/2}= \frac{\sin\theta}{R_2}$$
Thus, setting $K=\left(- \frac{\kappa_1\kappa_2}{\kappa_1+\kappa_2}\right)^{1/2}$ and $d=\lambda_1R_1=\lambda_2R_2$, the nonzero principal curvatures can be written as
\begin{equation*}
\rho_1=-\frac{\kappa_1}{[-(\kappa_1+\kappa_2)]^{1/2}}\frac{-\sinh (Ks)+d\cosh(Ks)}{\cosh(Ks)-d\sinh(Ks)}
=\frac{\kappa_1}{[-(\kappa_1+\kappa_2)]^{1/2}}\frac{\tanh(Ks)-d}{1-d\tanh(Ks)}
\end{equation*}
and
\begin{equation*}
\rho_2=-\frac{\kappa_2}{[-(\kappa_1+\kappa_2)]^{1/2}}\frac{1-d\tanh(Ks)}{\tanh(Ks)-d}.
\end{equation*}
Therefore, for a sequence $(s_n, (x_1)_n, (x_2)_n)\in \Omega'$ converging to $x$, both sequences 
$\frac{\tanh(Ks_n)-d}{1-d\tanh(Ks_n)}$ and $\frac{1-d\tanh(Ks_n)}{\tanh(Ks_n)-d}$ must converge to $1$, which can only occur if 
$s_n\mapsto \infty$. This leads to a contradiction, establishing that no such point $x$ can exist. Consequently, we must have $\Omega=M^3(\kappa)$.

\smallskip
This completes the proof of the theorem.\hfill\qed

\section{Flat hypersurfaces of $\mathbb{R}^k\times \mathbb{H}^{n-k+1}$}\label{ss:flat}

In this last section, inspired by Examples~ \ref{ex:int1} and \ref{ex:int2}, we give a geometric description of the flat hypersurfaces of $\mathbb{R}^k\times \mathbb{H}^{n-k+1}$ for  $1\leq k\leq n-3$. It shows, in particular,  how the complete flat spherical rotational hypersurfaces
of $\mathbb{R}\times \mathbb{H}^n$ in \cite[Theorem~4.4(iii)--(a)]{MaT} arise geometrically. It also provides a global version, for $2\leq k\leq n-3$, of the local classification of  flat hypersurfaces of $\mathbb{R}^k\times \mathbb{H}^{n-k+1}$ obtained in \cite[Theorem~12(iii)]{CT}  for any  $2\leq k\leq n-1$.

First observe that, regarding $\mathbb{R}^k\times \mathbb{H}^{n-k+1}$, $1\leq k\leq n-1$, as a hypersurface of
$\mathbb{R}^k\times \mathbb{R}^{n-k+2,1}=\mathbb{R}^{n+2,1}$, the intersection 
$M^n$ of $\mathbb{R}^k\times \mathbb{H}^{n-k+1}$ with an affine hyperplane of  $\mathbb{R}^{n+2,1}$ orthogonal to a light-like vector $w\in \mathbb{R}^{n+2,1}$ is a flat hypersurface of $\mathbb{R}^k\times \mathbb{H}^{n-k+1}$. Indeed, for $w$ is a normal vector field along the inclusion $i\colon M^n\to \mathbb{R}^{n+2,1}$ such that $\langle \secf_{i}(X,Y), w\rangle=0$ for all $X, Y\in \Gamma(TM^n)$,  and since $N_iM^n(x)$ is a Lorentzian plane for any $x\in M^n$ and $w$ is light-like, it follows that the image subspace of $\secf_{i}(x)$ is spanned by $w$ for every  $x\in M^n$. The Gauss equation of $i$ then implies that $M^n$ is flat. 

The next theorem shows that, conversely, any flat hypersurface of $\mathbb{R}^k\times \mathbb{H}^{n-k+1}$ for $1\leq k\leq n-3$ arises in this way.

\begin{theorem} \label{thm: flatH} 
If $f\colon M^n\to \mathbb{R}^k\times \mathbb{H}^{n-k+1}\subset \mathbb{R}^k\times \mathbb{R}^{n-k+2, 1}=\mathbb{R}^{n+2, 1}
$,
$
1\leq k\leq n-3,
$ 
is a flat hypersurface, then $f(M^n)$ is an open subset of the intersection of $\mathbb{R}^k\times \mathbb{H}^{n-k+1}$ with an affine hyperplane $\mathcal{H}$  of $\mathbb{R}^{n+2, 1}$ orthogonal to a light-like vector $w\in \mathbb{R}^{n+2, 1}$. Moreover:
\begin{enumerate}[label={\rm{(\roman*)}}]
\item\label{it:flat1} if $w\in \mathbb{R}^{n-k+2, 1}$, then $f(M^n)$ is an open subset of the image of an extrinsic product
$$
\id\times g\colon \mathbb{R}^{k}\times \mathbb{R}^{n-k}\to \mathbb{R}^{k}\times \mathbb{H}^{n-k+1},
$$
where 
$
g\colon  \mathbb{R}^{n-k}\to  \mathbb{H}^{n-k+1}
$ 
is a horosphere and  
$
\id \colon \mathbb{R}^{k}\to \mathbb{R}^{k}
$ 
is the identity map;

\smallskip
\item\label{it:flat2} if $w\not\in \mathbb{R}^{n-k+2, 1}$, then $f(M^n)$ is an open subset of the image of an extrinsic product
$$
\id\times g\colon \mathbb{R}^{k-1}\times \mathbb{R}^{n-k+1}\to \mathbb{R}^{k-1}\times(\mathbb{R}\times  \mathbb{H}^{n-k+1}),
$$
where 
$
g\colon  \mathbb{R}^{n-k+1}\to \mathbb{R}\times \mathbb{H}^{n-k+1}
$ 
is an isometric immersion, whose image is a {\rm(}complete flat{\rm)} spherical rotational hypersurface
by  \cite[Theorem~7.2$({\rm iii})-({\rm c})$]{MaT}, and 
$
\id \colon \mathbb{R}^{k-1}\to \mathbb{R}^{k-1}
$ 
is the identity map.
\end{enumerate} 
\end{theorem}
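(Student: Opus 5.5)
We regard $M^n$ via $\tilde F=h\circ f\colon M^n\to \mathbb{R}^{n+2,1}$, with $h$ the inclusion of $\mathbb{R}^k\times\mathbb{H}^{n-k+1}$. Here $\kappa_1=0$ and $\kappa_2=-1$, so $\nu_2=\tilde\pi_2\circ\tilde F$ is a time-like unit normal of $\tilde F$. By \eqref{eq:nu'} its shape operator is $S_{\nu_2}=-\cR$ and $\nabla^\perp_X\nu_2=\<X,\zeta\>h_*\xi$. The normal bundle $N_{\tilde F}M^n=\Span\{h_*\xi,\nu_2\}$ is a Lorentzian plane. Since $M^n$ is flat, the Gauss equation makes $\secf_{\tilde F}(x)$ a flat symmetric bilinear form into a Lorentzian plane.

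\textbf{Step 1: the normal image is degenerate.} Suppose $\mathcal{I}(\secf_{\tilde F}(x))$ is nondegenerate at some point. Lemma~\ref{lem:flatbi} (part \eqref{it:symim} if the image is the whole plane, part \eqref{it:pode} if it is a space-like line) gives $\dim\mathcal{N}(\secf_{\tilde F})\ge n-2$. On this nullity we have $S=0$ and $\cR=0$, so by \eqref{dimkerR} $n-2\le\dim\ker\cR\le k$. This contradicts $k\le n-3$; this is exactly where the hypothesis $k\le n-3$ enters. A time-like line as image is impossible by flatness. Hence the image lies in a light-like line at every point, i.e.\ there is a null normal field $Z=h_*\xi+\epsilon\nu_2$, $\epsilon=\pm1$, with $\<\secf_{\tilde F},Z\>=0$. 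This means $S=\epsilon\,\cR$ (note $\<\nu_2,\nu_2\>=-1$). By connectedness and continuity, $\epsilon$ is constant after fixing the sign of $\xi$.

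\textbf{Step 2: $w$ is constant.} Compute
\[
\nabla^\perp_XZ=\<X,\zeta\>\nu_2+\epsilon\<X,\zeta\>h_*\xi=\epsilon\<X,\zeta\>Z,
\]
using $\nabla^\perp_Xh_*\xi=\secf_h(f_*X,h_*\xi)=\<X,\zeta\>\nu_2$ for $\kappa_1=0$, $\kappa_2=-1$; this should be checked against the preceding formulas. Since $S=\epsilon\cR$ commutes with $\cR$, Lemma~\ref{lem:grad} shows that $\zeta$ is locally a gradient. Choose $\lambda>0$ with $\grad\log\lambda=-\epsilon\zeta$. Then $w=\lambda Z$ is parallel in the normal bundle. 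As $Z\perp\mathcal{I}(\secf_{\tilde F})$, also $\tilde\nabla_X w=0$, so $w$ is a constant light-like vector. Consequently $\<\tilde F,w\>$ is constant, since $\tilde F_*X\perp w$, and $f(M^n)$ lies in the affine hyperplane $\mathcal{H}=\{\<\cdot,w\>=c\}$. The local constants glue on connected $M^n$ because $w$ is determined up to scale by the null direction of $\mathcal{I}^\perp$. The intersection $(\mathbb{R}^k\times\mathbb{H}^{n-k+1})\cap\mathcal{H}$ is a hypersurface of dimension $n$, so $f(M^n)$ is an open subset of it.

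\textbf{Step 3: the two cases.} Decompose $w=w_1+w_2$ with $w_1\in\mathbb{R}^k$, $w_2\in\mathbb{R}^{n-k+2,1}$.

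If $w_1=0$, then $w_2$ is light-like and the condition $\<y,w_2\>=c$ does not involve the $\mathbb{R}^k$ coordinate. The intersection is therefore $\mathbb{R}^k\times(\mathbb{H}^{n-k+1}\cap\{\<y,w_2\>=c\})$, and the second factor is a horosphere; this is case~\ref{it:flat1}.

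If $w_1\neq0$, split $\mathbb{R}^k=\mathbb{R}^{k-1}\times\mathbb{R}w_1$. The coordinates in $\mathbb{R}^{k-1}=w_1^\perp$ are free, which yields the product $\id\times g$. Here $g$ is the intersection of $\mathbb{R}\times\mathbb{H}^{n-k+1}\subset\mathbb{R}^{n-k+3,1}$ with a hyperplane orthogonal to a light-like vector not contained in $\mathbb{R}^{n-k+2,1}$. This $g$ is flat by the remark preceding the theorem, and it is invariant under the rotations of $\mathbb{H}^{n-k+1}$ fixing $w_2$. Now $w_2$ is time-like, since $\<w_2,w_2\>=-|w_1|^2<0$, so these rotations are spherical rotations. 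One identifies $g$ with the complete flat spherical rotational hypersurface of \cite[Theorem~7.2(iii)--(c)]{MaT}, either by matching the profile curve or by uniqueness in that classification; this is case~\ref{it:flat2}.

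The main obstacle is Step~1: checking carefully the case analysis for the image subspace (space-like line, time-like line, whole plane) and the application of Lemma~\ref{lem:flatbi} to get $n-2\le k$. The identification with \cite{MaT} in Step~3 is mostly bookkeeping.
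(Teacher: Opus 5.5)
Your proposal is correct and follows the paper's proof essentially step for step. The common route is: rule out a nondegenerate image via Lemma~\ref{lem:flatbi} and $\dim\ker\cR\le k$; obtain $S=\pm\cR$ from the null normal; make $\lambda Z$ parallel using Lemma~\ref{lem:grad}; then split according to whether $w\in\mathbb{R}^{n-k+2,1}$. You are somewhat more careful than the paper about the one-dimensional image cases, the constancy of $\epsilon$, and gluing the local hyperplanes. One small correction: a time-like line as image is not excluded by flatness alone, since flatness only forces the coefficient form to have rank at most one; but such an image is nondegenerate and gives nullity $\ge n-1$, so the same contradiction with $k\le n-3$ disposes of it.
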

\begin{proof}
 Let 
$
j \colon \mathbb{R}^k\times \mathbb{H}^{n-k+1}\to \mathbb{R}^k\times\mathbb{R}^{n-k+2,1}=\mathbb{R}^{n+2,1}
$ 
be the inclusion,  and let 
$
\tilde f=j \circ f \colon  M^n \to \mathbb{R}^{n+2,1} 
$ 
be its composition with $f$. 
Flatness of $M^n$ implies that $\secf_{\tilde f}(x)$
is a flat bilinear form for any $x \in M^n$, with respect to the Lorentzian inner product in $N_{\tilde f}M^n(x)$. If $\mathcal{I}(\secf_{\tilde f}(x))$ was nondegenerate, that is, $\mathcal{I}(\secf_{\tilde f}(x)) \cap \mathcal{I}(\secf_{\tilde f}(x))^{\perp} = \{0\}$, then
$
\dim  \mathcal{N}(\secf_{\tilde f}(x)) \geq n-2
$ by Lemma~\ref{lem:flatbi}\eqref{it:symim}, 
and since $\mathcal{N}(\secf_{\tilde f}(x))=\ker S(x)\cap \ker \cR(x)$, then
\begin{equation}\label{ineq}
n-2\leq \dim (\ker S(x)\cap \ker \cR(x))\leq \dim  \ker \cR(x) \leq k,
\end{equation} 
a contradiction. Thus, $\mathcal{I}(\secf_{\tilde f}(x))$ is degenerate for every $x\in M^n$. Then, for a choice of one of the two unit normal vectors $\xi$ to $f$ in $\mathbb{R}^k \times \mathbb{H}^{n-k+1}$,  the subspace $\mathcal{I}(\secf_{\tilde f})\cap \mathcal{I}(\secf_{\tilde f})^{\perp}$ is spanned by the light-like vector $\nu=j_*\xi - \nu_2$, where $\nu_2=\tilde{\pi}_2\circ \tilde{f}$, with $\tilde{\pi}_2\colon \mathbb{R}^{n+2,1}\to  \mathbb{R}^{n-k+2,1}$ the orthogonal projection.  Using the first equation in \eqref{eq:nu'}, this implies that 
\begin{equation*}
    0 = \< \secf_{\tilde f}(X,Y),  j_*\xi - \nu_2 \> = \<   \<SX,Y\> j_*\xi - \<\cR X,Y\>\nu_2, j_*\xi - \nu_2 \> = \<SX,Y\> - \<\cR X,Y\> 
\end{equation*}
 for all $X,Y \in \Gamma(TM^n)$, hence $S=\cR$.  Therefore,
 $
\secf_{\tilde f}(X,Y)=\<\cR X,Y\>\nu
$
  for all $X,Y \in \Gamma(TM^n)$. Now, using the second equation in \eqref{eq:nu'}, we have
  $$
\nabla_X^\perp \nu =
\nabla_X^\perp j_*\xi-
\nabla_X^\perp \nu_2\\
=\<X, \zeta\>(-j_*\xi+f_2)\\
= -\<X, \zeta\>\nu.
 $$
Notice that by Lemma~\ref{lem:grad} the vector field  $\zeta$ is a gradient vector field. Taking (locally) a positive $\la\in C^{\infty}(M^n)$ such that 
$\grad(\log\la)= \zeta$ implies that $w=\lambda\nu$ is a constant vector in $\br^{n+2,1}$, for $S^{\tilde f}_w=0$ and 
 $$\nabla_X^\perp w=X(\lambda)\nu+\lambda \nabla_X^\perp \nu=\<\grad \lambda, X\>\nu-\lambda\<X,\zeta\>\nu=0.$$ Therefore,  $\tilde f(M^n)$ is contained in the intersection of $\mathbb{R}^k\times \mathbb{H}^{n-k+1}\subset \br^{n+2,1}$ with an affine (degenerate) hyperplane $\mathcal{H}$ orthogonal to $w$.

 If $w\in \mathbb{R}^{n-k+2, 1}$, then $\tilde{f}(M^n)\subset \mathbb{R}^k\times (\mathbb{H}^{n-k+1}\cap \mathcal{H})$, and \ref{it:flat1} holds. Otherwise,
 $w\in \mathbb{R}\times \mathbb{R}^{n-k+2, 1}$, up to an orthogonal transformation of $\mathbb{R}^{n+2, 1}$, hence
 \[\tilde{f}(M^n)\subset \mathbb{R}^{k-1}\times ((\mathbb{R}\times \mathbb{H}^{n-k+1})\cap \mathcal{H}),\] and \ref{it:flat2} holds for the 
 flat hypersurface $(\mathbb{R}\times \mathbb{H}^{n-k+1})\cap \mathcal{H}\subset \mathbb{R}^{n-k+2, 1}$ of $\mathbb{R}\times \mathbb{H}^{n-k+1}$. 
\end{proof}

 \begin{remark} 
 The case in which $k=1$ and $n=3$ is not covered under the assumption $1\leq k\leq n-3$ of Theorem \ref{thm: flatH}. Following the proof of Theorem \ref{thm: flatH}, we see that the inequalities \eqref{ineq} do not lead to a contradiction in this case, but rather imply that
 $\dim \ker \cR(x)=1=k$ for every $x\in M^3$.
 Then, Proposition~\ref{splits} implies that $f$ is locally decomposable, and, since $M^3$ is flat, $f$ is a vertical cylinder over a flat surface in $\mathbb{H}^3$.
 This corresponds to  \cite[Theorem~7.2(iii)-(a)]{MaT} in the classification of hypersurfaces with constant sectional curvature of $\mathbb{R}\times \mathbb{H}^n$.
 \end{remark}

\section*{Acknowledgements}
We thank Luc Vrancken for sending us an earlier version of \cite{LVWYhyp}  before it was publicly available.
After a preliminary version of this article (which was originally part of the preprint \cite{KNT}) was finished and some of its results were announced in \cite{CT} and \cite{CT2},
we became aware of the new version of \cite{LVWYhyp}, in which an appendix was included with parametrisations, similar to those in Subsection \ref{ss:alternative}, of the
hypersurfaces of dimension $n\geq 4$ and constant sectional curvature of a product of two spaces with constant sectional curvature, but only in the particular case where their
curvatures have the same absolute values. 

The first-named author is thankful to the Department of Mathematical and Physical Sciences at La Trobe University for its warm hospitality during his visit, where this project was initiated.

\end{document}